\title[K-stability of asymptotically log del Pezzo surfaces]{On the uniform K-stability 
for some asymptotically log del Pezzo surfaces}
\author{Kento Fujita} 
\date{\today}
\subjclass[2010]{Primary 14J45; Secondary 14L24}
\keywords{Fano varieties, K-stability}
\address{Department of Mathematics, Graduate School of Science, Osaka University, 
Toyonaka, Osaka 560-0043, Japan}
\email{fujita@math.sci.osaka-u.ac.jp}
\newcommand{\pr}{\mathbb{P}}
\newcommand{\Z}{\mathbb{Z}}
\newcommand{\Q}{\mathbb{Q}}
\newcommand{\R}{\mathbb{R}}
\newcommand{\C}{\mathbb{C}}
\newcommand{\A}{\mathbb{A}}
\newcommand{\G}{\mathbb{G}}
\newcommand{\Hom}{\operatorname{Hom}}
\newcommand{\Aut}{\operatorname{Aut}}
\newcommand{\ord}{\operatorname{ord}}
\newcommand{\vol}{\operatorname{vol}}
\newcommand{\PGL}{\operatorname{PGL}}
\newcommand{\diag}{\operatorname{diag}}
\newcommand{\coeff}{\operatorname{coeff}}
\newcommand{\sC}{\mathcal{C}}
\newcommand{\sO}{\mathcal{O}}
\newtheorem{thm}{Theorem}[section]
\newtheorem{lemma}[thm]{Lemma}
\newtheorem{proposition}[thm]{Proposition}
\newtheorem{corollary}[thm]{Corollary}
\theoremstyle{definition}
\newtheorem{definition}[thm]{Definition}
\newtheorem{remark}[thm]{Remark}
\newtheorem*{ack}{Acknowledgments}
\begin{document}

\maketitle 

\begin{abstract}
Motivated by the problem for the existence of K\"ahler-Einstein edge metrics, 
Cheltsov and Rubinstein conjectured the K-polystability of asymptotically 
log Fano varieties with small cone angles when the anti-log-canonical divisors are not big. 
Cheltsov, Rubinstein and Zhang proved it affirmatively in dimension $2$ with irreducible 
boundaries except for the type $(\operatorname{I.9B.}n)$ with $1\leq n\leq 6$. Unfortunately, recently, 
Fujita, Liu, S\"u\ss, Zhang and Zhuang showed the non-K-polystability for some 
members of type $(\operatorname{I.9B.}1)$ and for some members of type 
$(\operatorname{I.9B.}2)$. In this article, 
we show that Cheltsov--Rubinstein's problem is true for all of the remaining cases. 
More precisely, we explicitly compute the delta-invariant for asymptotically 
log del Pezzo surfaces of type $(\operatorname{I.9B.}n)$ for all $n\geq 1$ 
with small cone angles. 
As a consequence, we finish Cheltsov--Rubinstein's problem in dimension $2$
with irreducible boundaries. 
\end{abstract}

\setcounter{tocdepth}{1}
\tableofcontents

\section{Introduction}\label{intro_section}

Consider a $d$-dimensional smooth projective variety $X$ 
together with a smooth prime divisor $C$ on $X$. The pair $(X, C)$ is said to be 
an \emph{asymptotically log Fano variety} (with an irreducible boundary) if 
$-(K_X+(1-\beta)C)$ is ample for any sufficiently small $0<\beta\ll 1$. The notion was 
introduced in \cite{CR13}, motivated by the existence of K\"ahler-Einstein edge metrics 
with cone angle $2\pi\beta$. From the base-point-free theorem, the divisor 
$-(K_X+C)$ is semiample and induces the contraction morphism $\eta\colon X\to Z$ 
with $\eta_*\sO_X=\sO_Z$. We call the morphism the \emph{anti-log-canonical 
morphism}. In \cite{CR13}, the authors conjectured the following: 
\begin{enumerate}
\renewcommand{\theenumi}{\Alph{enumi}}
\renewcommand{\labelenumi}{(\theenumi)}
\item\label{intro_conj1}
If $\dim Z=d$, then $(X, C)$ does not admit K\"ahler-Einstein edge metrics with cone 
angle $2\pi\beta$ for any $0<\beta\ll 1$. 
\item\label{intro_conj2}
If $\dim Z<d$, then $(X, C)$ might admits K\"ahler-Einstein edge metrics with cone 
angle $2\pi\beta$ for any $0<\beta\ll 1$. 
\end{enumerate}
In fact, the assertion \eqref{intro_conj1} is proved affirmatively if $d=2$ in \cite{CR15} 
and if $d$ is arbitrary in \cite{logKst}. The assertion \eqref{intro_conj2} is proved 
affirmatively if $\dim Z=0$ (i.e., $C\sim -K_X$) in \cite{JMR} and if $d=2$ except for
few possibilities in \cite{CR13, CRZ}. The results \cite{CR13, CRZ} based on the 
classification of $2$-dimensional asymptotically log Fano varieties (called 
\emph{asymptotically log del Pezzo surfaces}). 
We recall the classification result in \cite{CR13}. 

\begin{thm}[{\cite[Theorems 1.4 and 2.1]{CR13}}]\label{CR_thm}
Let $(X, C)$ be an asymptotically log del Pezzo surface with $\dim Z=1$, where 
$\eta\colon X\to Z$ is the anti-log-canonical morphism. Then $(X, C)$ is isomorphic 
to one of the following types: 
\begin{description}
\item[{$(\operatorname{I.3A})$}]
$X=\pr_{\pr^1}\left(\sO\oplus\sO(1)\right)$, $C$ is a smooth member in 
$|2\xi|$, where $\xi$ is a tautological line bundle of 
$\pr_{\pr^1}\left(\sO\oplus\sO(1)\right)/\pr^1$. 
\item[{$(\operatorname{I.4B})$}]
$X=\pr^1\times\pr^1$, $C$ is a smooth member in $|\sO_{\pr^1\times\pr^1}(1,2)|$. 
\item[{$(\operatorname{I.9B.}n)$}\,\, $(n\geq 1)$]
$X$ is the blowup of $\pr^1\times\pr^1$ at $n$ numbers of distinct points lying on 
a smooth curve $C'\in|\sO_{\pr^1\times\pr^1}(1,2)|$ such that the composition 
$X\to\pr^1\times\pr^1\xrightarrow{p}\pr^1$ is a conic bundle, i.e., 
$-K_X$ is ample over $\pr^1$, $C$ is the strict transform of $C'$, where 
$p\colon \pr^1\times\pr^1\to\pr^1$ is the first projection morphism. 
\end{description}
\end{thm}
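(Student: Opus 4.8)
The plan is to read off a conic bundle structure on $X$ from the hypotheses and then run a relative minimal model program over the base, keeping track of the boundary $C$. Since $-(K_X+(1-\beta)C)$ is ample for $0<\beta\ll 1$, its limit $-(K_X+C)$ is nef; by assumption the associated semiample fibration $\eta\colon X\to Z$ has $\dim Z=1$, so $-(K_X+C)\sim\eta^*A$ for an ample divisor $A$ on the (necessarily smooth) curve $Z$, and $(-(K_X+C))^2=0$. Let $F$ be a general fibre of $\eta$. If $C\cdot F=0$ then $-(K_X+(1-\beta)C)\cdot F=-K_X\cdot F$ is independent of $\beta$, while $-(K_X+C)\cdot F=0$ forces $-K_X\cdot F=0$, contradicting ampleness; hence $C\cdot F>0$, so $-K_X\cdot F=C\cdot F>0$ and adjunction gives $F\cong\pr^1$, $-K_X\cdot F=2=C\cdot F$. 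Thus $\eta$ is a conic bundle and $C$ is a bisection. Writing $\deg A=d\geq 1$, adjunction on the smooth curve $C$ gives $2p_a(C)-2=(K_X+C)\cdot C=-d(F\cdot C)=-2d$, so $d=1$, $p_a(C)=0$, $C\cong\pr^1$. A degree-$2$ cover $C\to Z$ with $C\cong\pr^1$ forces $Z\cong\pr^1$ by Riemann--Hurwitz; hence $X$ is rational and $-K_X\sim C+F$.

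\emph{Structure of the fibres.} For every component $\Gamma$ of every fibre of $\eta$ one has $-(K_X+C)\cdot\Gamma=0$, so for small $\beta$ ampleness of $-(K_X+(1-\beta)C)$ is equivalent to $C\cdot\Gamma>0$ for all such $\Gamma$ (positivity on curves dominating $Z$, on $C$ itself, and of the self-intersection holds automatically for small $\beta$). Writing a singular fibre as $\sum m_i\Gamma_i$ with $\sum m_i(C\cdot\Gamma_i)=2$ and each $C\cdot\Gamma_i\geq 1$, a short intersection-and-genus computation (using $\Gamma_i\cdot F=0$ and adjunction on each $\Gamma_i$) shows every singular fibre is a union $\Gamma_1\cup\Gamma_2$ of two $(-1)$-curves meeting transversally at one point, with $C$ meeting each $\Gamma_j$ transversally at one point. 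If $k$ denotes the number of singular fibres, contracting one component of each gives $K_X^2=8-k$, whence $C^2=(-K_X-F)^2=K_X^2-4=4-k$.

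\emph{Reduction to a Hirzebruch surface.} Contracting one $(-1)$-curve in each singular fibre produces a relatively minimal conic bundle $\bar\eta\colon\F_m\to\pr^1$; since each contracted curve met $C$ transversally at one point, the image $\bar C\subset\F_m$ is a smooth irreducible rational bisection, and $X$ is the blowup of $\F_m$ at $k$ points of $\bar C$ lying in $k$ distinct fibres, with $C$ the strict transform of $\bar C$. From $\bar C^2=C^2+k=4$ and $\bar C\sim 2C_0+bf$ (with $C_0^2=-m$) we get $b=m+1$, so $\bar C\cdot C_0=1-m$; irreducibility of $\bar C$ together with $\bar C\ne C_0$ forces $m\in\{0,1\}$. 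If $m=0$ then $\bar C\in|\sO(1,2)|$, and $(X,C)$ is of type $(\operatorname{I.4B})$ when $k=0$ and of type $(\operatorname{I.9B.}k)$ when $k\geq 1$. If $m=1$ and $k=0$, then $\F_1=\pr_{\pr^1}(\sO\oplus\sO(1))$ and $\bar C\sim 2(C_0+f)\in|2\xi|$, giving type $(\operatorname{I.3A})$. If $m=1$ and $k\geq 1$, the identification $\Bl_q\F_1\cong\Bl_{q'}\F_0$ for $q\notin C_0$ (both equal $\Bl_2\pr^2$) lets me re-choose the contractions so that the relatively minimal model is $\F_0$, landing again in type $(\operatorname{I.9B.}k)$. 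Conversely, each listed pair is seen to be asymptotically log del Pezzo with $\dim Z=1$ by a direct ampleness computation on $\F_0$, $\F_1$ and their blowups.

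\emph{Main difficulty.} The first two steps are essentially formal; the real work is the bookkeeping of the last step — ensuring that only $\F_0$ and $\F_1$ can occur, that all blown-up points lie on $\bar C$ and in distinct fibres, that one may pass between the two relatively minimal models when $m=1$, and that the resulting surfaces and boundaries are identified with the precise normal forms in the statement (in particular the linear systems $|2\xi|$ on $\pr_{\pr^1}(\sO\oplus\sO(1))$ and $|\sO(1,2)|$ on $\pr^1\times\pr^1$).
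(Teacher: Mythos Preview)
The paper does not give its own proof of this statement: Theorem~\ref{CR_thm} is quoted verbatim from \cite[Theorems 1.4 and 2.1]{CR13} and used as background for the main results. So there is no in-paper argument to compare your attempt against.

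That said, your sketch is correct and is essentially the standard route (and the one taken in \cite{CR13}). The adjunction computation forcing $d=1$, $C\cong\pr^1$, $Z\cong\pr^1$ is right; the analysis of singular fibres using $C\cdot\Gamma_i>0$ and $\sum m_i(C\cdot\Gamma_i)=2$ correctly rules out multiple fibres and leaves only the pair of transversal $(-1)$-curves; and the Hirzebruch-surface reduction with $\bar C\sim 2C_0+(m+1)f$, $\bar C\cdot C_0=1-m\geq 0$ pinning down $m\in\{0,1\}$ is exactly how the classification goes. The one point worth making explicit in the $m=1$, $k\geq 1$ case is that the blown-up points lie on $\bar C$ and $\bar C\cdot C_0=0$, so they avoid the negative section; hence the elementary transformation (blow up on $\bar C$, blow down the old fibre) lands on $\F_0$ with the image of $C$ still a smooth bisection of self-intersection $4$, i.e.\ in $|\sO(1,2)|$, placing the pair in type $(\operatorname{I.9B.}k)$ as claimed.
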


By the works \cite[Propositions 7.4 and 7.5]{CR13} and \cite[Theorem 1.3]{CRZ}, 
the assertion \eqref{intro_conj2} is turned out to be true except for 
the types $(\operatorname{I.9B.}n)$ with $1\leq n\leq 6$. The strategy in 
\cite{CRZ} is to evaluate the \emph{delta-invariant} $\delta(X, (1-\beta)C)$
introduced in \cite{FO, BJ} 
for $2$-dimensional log Fano pairs $(X, (1-\beta)C)$. 
(We will recall the definition of the 
delta-invariant in \S \ref{prelim_section}.) In fact, it has been shown in \cite{BJ} that 
the \emph{uniform K-stability} (resp., the \emph{K-semistability}) of $(X, (1-\beta)C)$ 
is equivalent to the condition $\delta(X, (1-\beta)C)>1$ (resp., 
$\delta(X, (1-\beta)C)\geq 1$). Moreover, by the works \cite{tian, don, stoppa, 
CDS1, CDS2, CDS3, tian2, TW, LTW} and references therein, the 
\emph{K-polystability} of $(X, (1-\beta)C)$ is known to be equivalent to the existence of 
K\"ahler-Einstein edge metrics with cone angle $2\pi\beta$, where the K-polystability 
is weaker than the uniform K-stability and is stronger than the K-semistability. 

However, recently, the assertion \eqref{intro_conj2} was turned out to be not true 
in \cite{FLSZZ} even when $d=2$. Let $(X, C)$ be an asymptotically del Pezzo surface
of type $(\operatorname{I.9B.}n)$ and let $\eta\colon X\to \pr^1$ be the 
anti-log-canonical morphism. Obviously, the morphism $\eta$ is equal to 
the composition $X\to\pr^1\times\pr^1\xrightarrow{p}\pr^1$ in the definition of 
$(\operatorname{I.9B.}n)$. Under the above setting, the restriction morphism 
$\eta|_C\colon C\to\pr^1$ is a double cover. Thus there are exactly $2$ numbers of 
ramification points $q_1$, $q_2\in C$ of $\eta|_C$. In \cite{FLSZZ}, the authors got the 
following result: 

\begin{thm}[{\cite[\S 2]{FLSZZ}}]\label{FLSZZ_thm}
\begin{enumerate}
\renewcommand{\theenumi}{\arabic{enumi}}
\renewcommand{\labelenumi}{(\theenumi)}
\item\label{FLSZZ_thm1}
If $n=1$ and $q_1$ lies on the singular fiber of the conic bundle 
$\eta\colon X\to\pr^1$, then we have 
\[
\delta\left(X,(1-\beta)C\right)\leq\frac{3\beta +4}{4\beta +4}
\]
for any $\beta\in(0,1]\cap\Q$. Thus $(X,(1-\beta)C)$ is not K-semistable. 
\item\label{FLSZZ_thm2}
Assume that $n=2$ and $\beta\in(0$, $1]\cap\Q$. 
\begin{enumerate}
\renewcommand{\theenumii}{\roman{enumii}}
\renewcommand{\labelenumii}{\rm{(\theenumii)}}
\item\label{FLSZZ_thm21}
If both $q_1$ and $q_2$ lie on singular fibers of $\eta$, then $(X, C)$ admits 
K\"ahler-Einstein edge metrics with cone angle $2\pi\beta$. 
\item\label{FLSZZ_thm22}
If $q_1$ lies on a singular fiber of $\eta$ and if $q_2$ does not lie on any singular fiber 
of $\eta$, then $(X, (1-\beta)C)$ is not K-polystable but K-semistable. 
\end{enumerate}
\end{enumerate}
\end{thm}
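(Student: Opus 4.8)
The plan is to treat Theorem~\ref{FLSZZ_thm} as two essentially independent computations of the delta-invariant via divisorial valuations, relying on the characterization from \cite{BJ} that K-semistability (resp.\ K-polystability) of $(X,(1-\beta)C)$ can be detected through $\delta(X,(1-\beta)C)$ and through the (non)existence of a destabilizing divisorial valuation together with its automorphism-invariance. First I would set up coordinates on the conic bundle $\eta\colon X\to\pr^1$ explicitly: for $(\operatorname{I.9B.}1)$ write $X=\Bl_{p_1}(\pr^1\times\pr^1)$ where $p_1\in C'\in|\sO(1,2)|$ lies on a fiber $f_0$ of $p$, so that the preimage of $f_0$ in $X$ is a pair of $(-1)$-curves $E_1+\tilde{f}_0$; for $(\operatorname{I.9B.}2)$ blow up two such points. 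Then I would record $-(K_X+(1-\beta)C)$ as an explicit $\R$-divisor, compute its self-intersection (the volume), and identify the relevant extremal curves (the fiber components, the strict transform of $C'$, the exceptional curves).

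\medskip

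For part~\eqref{FLSZZ_thm1} I would produce the upper bound $\delta(X,(1-\beta)C)\leq\frac{3\beta+4}{4\beta+4}$ by exhibiting a single divisorial valuation $v$ — the natural candidate is $v=\ord_{E}$ where $E$ is the exceptional divisor obtained by blowing up the point $q_1\in C\cap E_1$ (the intersection of the boundary with the $(-1)$-curve meeting the old exceptional curve), or possibly $\ord_{E_1}$ itself, and then checking which gives the stated ratio. The key computation is the log-canonical threshold / the expected vanishing order: one computes $A_{(X,(1-\beta)C)}(v)$ and the $S$-invariant $S_{(X,(1-\beta)C)}(v)=\frac{1}{\vol(-(K_X+(1-\beta)C))}\int_0^\infty\vol\bigl(-(K_X+(1-\beta)C)-tv\bigr)\,dt$ by running the appropriate sequence of Zariski decompositions on the blown-up surface as $t$ increases, and then $\delta\leq A_{(X,(1-\beta)C)}(v)/S_{(X,(1-\beta)C)}(v)$. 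I expect the Zariski-chamber analysis — tracking when the moving part stops being nef and which negative curves enter the support — to be the main obstacle, since the pseudoeffective threshold and the breakpoints depend on $\beta$ and several curves ($C$, $E_1$, $\tilde f_0$, $E$) interact.

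\medskip

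For part~\eqref{FLSZZ_thm21} I would prove K-polystability (equivalently existence of the KE edge metric, by \cite{LTW} and the references listed) by first showing K-semistability, i.e.\ $\delta(X,(1-\beta)C)\geq 1$, via an Abban--Zhuang type flag/filtration estimate: stratify $X$ by the center of a hypothetical destabilizing valuation, reduce to valuations centered on curves and points, and for each stratum bound $S$ from above using the refinement of the volume function along a suitable flag (a fiber of $\eta$ together with a point on it, or the curve $C$ together with a point). The symmetry here is crucial: when both ramification points $q_1,q_2$ lie on singular fibers, the configuration admits an extra finite automorphism (swapping or fixing the two singular fibers, induced from the hyperelliptic involution of $C$ relative to $\eta$), so any potential destabilizing valuation can be averaged to an invariant one, and I would use this to rule out the strictly semistable destabilizers — this $\Aut$-invariance is exactly what upgrades K-semistability to K-polystability.

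\medskip

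For part~\eqref{FLSZZ_thm22} I would first establish K-semistability by the same flag-estimate bound $\delta\geq 1$, then exhibit an explicit test configuration (a $\G_m$-degeneration, e.g.\ the deformation to the normal cone of the $(-1)$-curve through $q_1$, or more precisely a product-type configuration associated with a $\G_m\subset\Aut(X,C)$ coming from the non-singular-fiber point $q_2$) with Donaldson--Futaki invariant zero that is not a product, which shows failure of K-polystability; alternatively, I would locate a divisorial valuation $v$ with $A_{(X,(1-\beta)C)}(v)=S_{(X,(1-\beta)C)}(v)$ that is not $\Aut$-invariant. The delicate point is to verify that this strictly semistable valuation is genuinely non-polystabilizing, i.e.\ that no automorphism fixes it, which again comes down to the asymmetry of the configuration: only one of the two ramification points sits on a singular fiber, so there is no involution exchanging them. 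In all three cases the common engine is the computation of $\delta$ through Zariski decompositions on blowups of $X$ and through the Abban--Zhuang inductive bound on flags, and I anticipate that organizing the $\beta$-dependent casework in these volume integrals — rather than any single conceptual difficulty — will be where most of the work lies.
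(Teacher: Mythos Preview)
First, note that Theorem~\ref{FLSZZ_thm} is quoted from \cite{FLSZZ}; the present paper does not prove it in full generality but only recovers it for small cone angles $\beta\in(0,1/(7n))$ as a byproduct of the main theorem (see Remark~\ref{main_rmk}~\eqref{main_rmk2} and Remarks~\ref{61_rmk}, \ref{62_rmk}). Your proposal targets all $\beta\in(0,1]$, which is more than the paper itself attempts here.

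For part~\eqref{FLSZZ_thm1} your plan matches the paper's: exhibit one divisorial valuation and compute $A/S$. The paper takes the ordinary blowup of $q_1$ (the case $(a,b)=(1,1)$ in Theorem~\ref{caseII_thm}, Step~2) and obtains $S(F)=4(\beta+1)^2/(3\beta+4)$, hence $A_{X,\Delta}(F)/S(F)=(3\beta+4)/(4(\beta+1))$.

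For part~\eqref{FLSZZ_thm2} there is a genuine gap in your mechanism for passing between K-semistability and K-polystability. By Definition~\ref{K_dfn}~\eqref{K_dfn3}, K-polystability requires that every dreamy $F$ with $A(F)=S(F)$ be \emph{product-type}, i.e.\ arise from a one-parameter subgroup $\G_m\hookrightarrow\Aut(X,\Delta)$; it is not a statement about invariance under finite automorphisms or about averaging valuations. In case~\eqref{FLSZZ_thm21} the paper (Remark~\ref{61_rmk}) writes down an explicit $\G_m$-action on $(X,(1-\beta)C)$ and checks that the unique $F$ achieving $A(F)/S(F)=1$ is exactly the quasi-monomial valuation of weight $(2,1)$ induced by this $\G_m$, hence product-type; your proposed finite involution would not yield this. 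In case~\eqref{FLSZZ_thm22} the paper (Remark~\ref{62_rmk}) uses that $\Aut(X,(1-\beta)C)$ is \emph{discrete}, so there is no nontrivial $\G_m$ at all and no $F$ can be product-type; since a dreamy $F$ with $A(F)=S(F)$ exists, K-polystability fails. Your first alternative for~\eqref{FLSZZ_thm22} (a non-product test configuration with $\DF=0$) would work, but your second alternative and the criterion ``no automorphism fixes it'' are not the right conditions.

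Methodologically, your Abban--Zhuang flag approach for the bound $\delta\geq 1$ differs from the paper's, which instead classifies all plt-type prime divisors over $(X,\Delta)$ via the monoidal-transform combinatorics of \S\ref{plt_section} and bounds $A/S$ case by case. Either route can plausibly give the K-semistability bound, but only the product-type analysis via an explicit $\G_m$ settles polystability.
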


Thanks to Theorem \ref{FLSZZ_thm}, we must modify the assertion 
\eqref{intro_conj2}. However, currently, there is no good substitution. In order to 
consider it, it is important to understand the existence of K\"ahler-Einstein edge 
metrics with small cone angles in dimension $2$ completely. The purpose of this article 
is to compute the delta-invariant of the asymptotically log del Pezzo surfaces 
of type $(\operatorname{I.9B.}n)$ with small cone angles. The following is 
the main theorem in this article (see also Theorems \ref{S7_thm}, \ref{caseI_thm} and  \ref{caseII_thm}): 

\begin{thm}[Main Theorem]\label{mainthm}
Let $(X, C)$ be an asymptotically log del Pezzo surface of type 
$(\operatorname{I.9B.}n)$, let $\eta\colon X\to\pr^1$ be the anti-log-canonical 
morphism, and let $q_1$, $q_2\in C$ be the ramification points of the double 
cover $\eta|_C\colon C\to\pr^1$. Take any $\beta\in(0$, \,$1/(7n))\cap\Q$. 
\begin{enumerate}
\renewcommand{\theenumi}{\arabic{enumi}}
\renewcommand{\labelenumi}{(\theenumi)}
\item\label{mainthm1}
If either $q_1$ or $q_2$ lies on a singular fiber of $\eta$, then we have 
\[
\delta\left(X, (1-\beta)C\right)=\frac{3((4-n)\beta+4)(\beta+1)}{(n^2-9n+20)\beta^2
+(-6n+30)\beta+12}. 
\]
\item\label{mainthm2}
Otherwise, we have
\[
\delta\left(X, (1-\beta)C\right)=\frac{3((4-n)\beta+4)(2\beta+1)}{(n^2-10n+24)\beta^2
+(-6n+36)\beta+12}.
\]
\end{enumerate}
\end{thm}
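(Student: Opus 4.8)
The plan is to compute $\delta(X,(1-\beta)C)$ by the standard valuative criterion of Fujita--Odaka and Blum--Jonsson: namely $\delta(X,(1-\beta)C)=\inf_E A_{X,(1-\beta)C}(E)/S_{X,(1-\beta)C}(E)$, the infimum running over prime divisors $E$ over $X$, where $S_{X,(1-\beta)C}(E)=\frac{1}{\vol(-K_X-(1-\beta)C)}\int_0^\infty\vol(-K_X-(1-\beta)C-tE)\,dt$. Because $\beta$ is taken extremely small (smaller than $1/(7n)$), the anti-log-canonical class is a small perturbation of $-K_X-C$, which is $\eta$-pulled back from $\pr^1$; this is exactly the regime in which one expects the infimum to be attained by a very restricted list of divisors, and the hard analytic input (properness, existence of a minimizer, reduction to a quasi-monomial valuation with center a point) may be quoted from the general theory. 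So the real work is (a) identifying the finite list of candidate divisors $E$ that can compute $\delta$, and (b) for each candidate, explicitly evaluating $A/S$ as a function of $\beta$ and $n$ and minimizing.

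First I would set up the concrete geometry: write $f\colon X\to\pr^1\times\pr^1$ for the blowup at $n$ points $p_1,\dots,p_n\in C'$ lying on distinct fibers of $p$, with exceptional curves $E_1,\dots,E_n$; let $F$ denote the class of a fiber of $\eta$ (the strict transform of a ruling $\{*\}\times\pr^1$), and record $-K_X-(1-\beta)C$ in terms of $F$, $C$, and the $E_i$ using $C'\sim \sO(1,2)$, $-K_{\pr^1\times\pr^1}\sim\sO(2,2)$, and adjunction. The key structural observation is that for $0<\beta\ll 1$ the candidates for the minimizing divisor are: the horizontal curve $C$ itself; the fiber components of the (at most $n$) singular fibers of the conic bundle $\eta$, in particular the two $(-1)$-components of a reducible fiber over $\eta(q_i)$; possibly $E_i$ and its relatives; and, crucially, divisors extracted by weighted blowups at the ramification points $q_1,q_2\in C$, since the position of $q_i$ relative to the singular fibers is precisely what distinguishes cases \eqref{mainthm1} and \eqref{mainthm2} (cf. Theorem \ref{FLSZZ_thm}, where the obstructions came exactly from such configurations). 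I would compute $A$ and $S$ for each: for a curve $E$ on $X$ one Zariski-decomposes $-K_X-(1-\beta)C-tE$ on the surface and integrates the volume (the positive-part self-intersection) against $t$; for a divisor over a point one uses a (weighted) blowup and, when needed, refines by iterating the blowup or by the Abban--Zhuang type estimate along the flag $(q_i\in C)$ or $(q_i\in F_{\text{sing}})$.

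The decisive step is to show that the minimum of $A/S$ over the full candidate list equals the claimed rational function. Concretely, the two expressions in \eqref{mainthm1} and \eqref{mainthm2} should each arise from a single, explicit divisor — I expect a weighted blowup at the ramification point $q_i$, the weights dictated by whether the local branch of $C$ meets a singular fiber ($q_i$ on $F_{\text{sing}}$, giving the denominator $(n^2-9n+20)\beta^2+\cdots$) or meets only a smooth fiber (denominator $(n^2-10n+24)\beta^2+\cdots$) — and the remaining work is the inequality $A_{X,(1-\beta)C}(E)\ge (\text{claimed }\delta)\cdot S_{X,(1-\beta)C}(E)$ for every other prime divisor $E$. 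For divisors whose center is a curve, or a point away from $q_1,q_2$, the bound should follow from a direct Zariski-decomposition computation showing the relevant $A/S$ stays strictly above the claimed value for $\beta<1/(7n)$; the constant $1/(7n)$ is almost certainly chosen so that these competing estimates are clean linear/quadratic inequalities in $\beta$. The main obstacle — and where I would spend most of the effort — is the case analysis for divisors centered at a singular point of a reducible fiber of $\eta$ or at an intersection point of $C$ with an $E_i$ or with a singular fiber, because there the Zariski chamber structure of $-K_X-(1-\beta)C-tE$ changes as $t$ grows and one must carefully track several breakpoints; organizing this so that every branch is subsumed by the two stated formulas, uniformly in $n\ge 1$, is the technical heart of the proof, and I anticipate it is handled by the auxiliary Theorems \ref{S7_thm}, \ref{caseI_thm} and \ref{caseII_thm} cited in the statement.
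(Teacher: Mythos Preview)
Your outline is correct and matches the paper's architecture: the infimum in $\delta$ is computed by an explicit divisor centered at a ramification point $q_i$ (indeed the ordinary blowup at $q_i$ when the fiber through $q_i$ is singular, and a $(2,1)$-weighted blowup when it is smooth), and the auxiliary Theorems \ref{S7_thm}, \ref{caseI_thm}, \ref{caseII_thm} handle respectively the ``away from $q_1,q_2$'' case and the two local cases at $q_i$, via Zariski decomposition and the plt-type combinatorics of \cite{KpsMaeda} (which plays the role of your Abban--Zhuang flag estimate).

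There is one structural idea you do not anticipate, and it materially shortens the argument. For divisors $F$ with $c_X(F)\notin\{q_1,q_2\}$ the paper does \emph{not} carry out a direct Zariski-decomposition computation on $X$ for general $n$. Instead it contracts $n-1$ of the $(-1)$-curves (avoiding the center of $F$) to land on a surface $X'$ of type $(\operatorname{I.9B.}1)$, and uses the elementary inequality $S_L(F)\le \frac{(L'^{\cdot 2})}{(L^{\cdot 2})}S_{L'}(F)$ coming from $\theta^*L'-L\ge 0$ (Proposition \ref{easy_prop}). This reduces the entire ``generic center'' analysis to the single degree-$7$ del Pezzo, where Theorem \ref{S7_thm} gives the uniform bound $A/S\ge 6/5$; multiplying by the volume ratio $\frac{(4-n)\beta+4}{3\beta+4}$ already beats both claimed values of $\delta$. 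Your proposed direct computation for arbitrary $n$ would in principle work but would multiply the casework by a factor of $n$; the reduction is what makes the argument uniform in $n$.
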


\begin{remark}\label{main_rmk}
\begin{enumerate}
\renewcommand{\theenumi}{\arabic{enumi}}
\renewcommand{\labelenumi}{(\theenumi)}
\item\label{main_rmk1}
We have the inequality 
\begin{eqnarray*}
\frac{3((4-n)\beta+4)(2\beta+1)}{(n^2-10n+24)\beta^2
+(-6n+36)\beta+12}
>\frac{3((4-n)\beta+4)(\beta+1)}{(n^2-9n+20)\beta^2
+(-6n+30)\beta+12}. 
\end{eqnarray*}
\item\label{main_rmk2}
We have 
\[
\frac{3((4-n)\beta+4)(\beta+1)}{(n^2-9n+20)\beta^2
+(-6n+30)\beta+12}
\begin{cases}
=\frac{3\beta+4}{4\beta+4} & \text{if }n=1, \\
=1 & \text{if }n=2, \\
\in\left(1,\,\frac{3}{2}\right) & \text{if }n\geq 3.
\end{cases} 
\]
Thus Theorem \ref{mainthm} generalizes Theorem \ref{FLSZZ_thm} \eqref{FLSZZ_thm1} 
with small cone angles. 
Moreover, the proof of Theorem \ref{mainthm} gives an alternative proof of 
Theorem \ref{FLSZZ_thm} \eqref{FLSZZ_thm2} with small cone angles. See Remarks 
\ref{61_rmk} and \ref{62_rmk}. 
\item\label{main_rmk3}
We have 
\[
\frac{3((4-n)\beta+4)(2\beta+1)}{(n^2-10n+24)\beta^2
+(-6n+36)\beta+12}\,\,\in\,\,\left(1,\,\,\,\frac{3}{2}\right)
\]
for any $n \geq 1$. 
\end{enumerate}
\end{remark}

Thanks to Theorem \ref{mainthm}, together with the results in \cite{CR13, CR15, JMR, 
FLSZZ}, we finish Cheltsov--Rubinstein's problem in dimension $2$ with irreducible 
boundaries. The strategy to prove Theorem \ref{mainthm} is to analyze plt-type 
prime divisors over $(X, (1-\beta)C)$ combinatorially. The idea is based on 
the work \cite{KpsMaeda}. If $\beta$ is very small, then the possibility of 
plt-type prime divisors over $(X, (1-\beta)C)$ is very restricted. Moreover, 
we can easily reduce the computation to the case $n=1$ in many cases. 

We work over the complex number field $\C$. However, Theorem \ref{mainthm} and 
all arguments from \S \ref{prelim_section} work over any algebraically closed 
field $\Bbbk$ of characteristic zero. For the minimal model program, we refer the 
readers to \cite{KoMo}; for the toric geometry, we refer the readers to \cite{CLS}. 
In this article, we only treat $2$-dimensional toric varieties. We always fix the lattice 
$N:=\Z^{\oplus 2}$ of rank $2$ and its dual lattice $M:=\Hom_\Z(N,\Z)$. 
We set $N_\R:=N\otimes_\Z\R$ and $M_\R:=M\otimes_\Z\R$. For any 
birational map $\phi\colon X\dashrightarrow X'$ 
between normal projective varieties and 
for any $\Q$-Weil divisor $\Delta$ on $X$, the strict transform of $\Delta$ to $X'$ 
(that is, $\phi_*\Delta$) is often denoted by $\Delta^{X'}$ in this article. For a prime 
divisor $E$ on $X$, the coefficient of $\Delta$ at $E$ is denoted by $\coeff_E\Delta$. 

\begin{ack}
The author thanks Ivan Cheltsov, Yuchen Liu, Hendrik S\"u\ss, Kewei Zhang and 
Ziquan Zhuang for discussions. 
This work was supported by JSPS KAKENHI Grant Number 18K13388.
\end{ack}

\section{Preliminaries}\label{prelim_section}

\subsection{The K-stability of log Fano pairs}\label{K_section}

Let us assume that $(X, \Delta)$ is a $d$-dimensional \emph{log Fano pair}, 
that is, $X$ is a 
$d$-dimensional normal projective variety, $\Delta$ is an effective $\Q$-Weil 
divisor such that $-(K_X+\Delta)$ is an ample $\Q$-Cartier $\Q$-divisor and 
the pair $(X, \Delta)$ is a klt pair. Set $L:=-(K_X+\Delta)$. We recall the notions of the 
uniform K-stability, the K-polystability 
and the K-semistability of $(X, \Delta)$ according to the 
interpretation \cite{li, vst, pltK, hyperplane, BX} of the original one \cite{tian, don}. 

\begin{definition}[{\cite{li, vst, pltK, hyperplane}}]\label{vst_dfn}
Let $F$ be a prime divisor over $X$. Then there exists a log resolution 
$\pi\colon\tilde{X}\to X$ of $(X, \Delta)$ such that $F$ can be realized as a prime 
divisor on $\tilde{X}$. 
\begin{enumerate}
\renewcommand{\theenumi}{\arabic{enumi}}
\renewcommand{\labelenumi}{(\theenumi)}
\item\label{vst_dfn1}
We set 
\[
A_{X, \Delta}(F):=1+\coeff_F\left(K_{\tilde{X}}-\pi^*(K_X+\Delta)\right).
\]
\item\label{vst_dfn2}
We set 
\[
S_L(F):=S(F):=\frac{1}{(L^{\cdot d})}\int_0^\infty\vol_{\tilde{X}}\left(\pi^*L-x F\right)dx, 
\]
where $\vol_{\tilde{X}}$ is the volume function \cite{L1, L2}. The function is continuous 
and non-increasing. Moreover, the function is $\sC^1$ whenever
$\vol_{\tilde{X}}\left(\pi^*L-x F\right)>0$ \cite{BFJ}. 
\item\label{vst_dfn3}
The divisor $F$ is said to be \emph{dreamy} over $(X, \Delta)$ if the algebra
\[
\bigoplus_{k,j\in\Z_{\geq 0}}H^0\left(\tilde{X}, k r\pi^*L-j F\right)
\]
is finitely generated over the base field $\Bbbk$ for some $r\in\Z_{>0}$ with $r L$ Cartier. 

If $F$ is dreamy over $(X, \Delta)$, then there exists the birational morphism 
$\sigma\colon Y\to X$ with $Y$ normal such that $F$ is a $\Q$-Cartier divisor on $Y$ 
and is anti-ample over $X$. We call the $\sigma$ the \emph{extraction} of $F$. 
\item\label{vst_dfn4}
The divisor $F$ is said to be \emph{plt-type} over $(X, \Delta)$ if there exists the 
extraction $\sigma\colon Y\to X$ of $F$ and the pair $(Y, \Delta_Y+F)$ is a plt pair, 
where the $\Q$-Weil divisor $\Delta_Y$ on $Y$ is defined by the equation 
\[
K_Y+\Delta_Y+\left(1-A_{X, \Delta}(F)\right)F=\sigma^*(K_X+\Delta).
\]
\item\label{vst_dfn5}
The divisor $F$ is said to be \emph{product-type} over $(X, \Delta)$ if there exists a 
one-parameter subgroup $\rho\colon\G_m\to\Aut(X, \Delta)$ such that the divisorial 
valuation $\ord_F\colon\Bbbk(X)^*\to\Z$ of $F$ is equal to the composition 
\[
\Bbbk(X)^*\xrightarrow{\rho^*}\Bbbk(X)(t)^*\xrightarrow{\ord_{(t^{-1})}}\Z,
\]
where the field extension 
$\rho^*$ is induced by the morphism $\rho\colon\G_m\times X\to X$. 
\end{enumerate}
\end{definition}

\begin{definition}\label{K_dfn}
\begin{enumerate}
\renewcommand{\theenumi}{\arabic{enumi}}
\renewcommand{\labelenumi}{(\theenumi)}
\item\label{K_dfn1}
\cite{li, vst}
$(X, \Delta)$ is said to be \emph{K-semistable} if $A_{X, \Delta}(F)\geq S(F)$ holds 
for any prime divisor $F$ over $X$. 
\item\label{K_dfn2}
\cite{vst, pltK}
$(X, \Delta)$ is said to be \emph{uniformly K-stable} if there exists $\delta>1$
such that 
$A_{X, \Delta}(F)\geq \delta\cdot S(F)$ holds
for any prime divisor $F$ over $X$. 
\item\label{K_dfn3}
\cite{hyperplane, BX}
$(X, \Delta)$ is said to be \emph{K-polystable} if K-semistable, and 
$A_{X, \Delta}(F)=S(F)$ holds for a prime divisor $F$ over $X$ only if $F$ is 
product-type over $(X, \Delta)$. 
\end{enumerate}
\end{definition}

\begin{remark}\label{dreamy_rmk}
It is known in \cite{vst, pltK, hyperplane, BX} 
that it is enough to check the above inequalities 
for only dreamy prime divisors over $(X, \Delta)$ in order to test 
the uniform K-stability, the K-polystability and the K-semistability. 
\end{remark}

As an immediate interpretation, we can get the following: 

\begin{definition}\label{delta_dfn}
We set 
\[
\delta(X, \Delta):=\inf_F\frac{A_{X, \Delta}(F)}{S(F)}, 
\]
and call it the \emph{delta-invariant} of $(X, \Delta)$, 
where $F$ runs through all prime divisors over $X$. 
\end{definition}

\begin{corollary}\label{delta_cor}
$(X, \Delta)$ is K-semistable $($resp., uniformly K-stable$)$ if and only if 
$\delta(X, \Delta)\geq 1$ $($resp., $\delta(X, \Delta)>1$$)$. 
\end{corollary}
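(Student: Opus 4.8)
\textbf{Proof proposal for Corollary \ref{delta_cor}.}

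The plan is to deduce this directly from the characterizations recalled just above, namely that by \cite{BJ} the uniform K-stability (resp.\ K-semistability) of $(X,\Delta)$ is equivalent to the existence of $\delta>1$ (resp.\ the validity) of the inequality $A_{X,\Delta}(F)\geq\delta\cdot S(F)$ for all prime divisors $F$ over $X$, together with Remark \ref{dreamy_rmk} which lets us restrict to dreamy divisors. So the content to verify is purely the elementary equivalence between "$A_{X,\Delta}(F)\geq c\cdot S(F)$ for all $F$'' and a lower bound on the infimum $\delta(X,\Delta)=\inf_F A_{X,\Delta}(F)/S(F)$.

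First I would note that since $(X,\Delta)$ is log Fano, $L=-(K_X+\Delta)$ is ample, so for every prime divisor $F$ over $X$ one has $S(F)>0$ (the integrand $\vol_{\tilde X}(\pi^*L-xF)$ is positive for small $x>0$ since $\pi^*L$ is big), and $A_{X,\Delta}(F)>0$ because $(X,\Delta)$ is klt. Hence each ratio $A_{X,\Delta}(F)/S(F)$ is a well-defined positive real number, and the infimum $\delta(X,\Delta)$ is a well-defined element of $(0,+\infty]$.

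Next, for the K-semistable case: $\delta(X,\Delta)\geq 1$ means $A_{X,\Delta}(F)/S(F)\geq 1$ for all $F$, i.e.\ $A_{X,\Delta}(F)\geq S(F)$ for all $F$, which is exactly Definition \ref{K_dfn}\eqref{K_dfn1}. For the uniformly K-stable case, if $(X,\Delta)$ is uniformly K-stable then by definition there is $\delta_0>1$ with $A_{X,\Delta}(F)\geq\delta_0 S(F)$ for all $F$, whence $\delta(X,\Delta)\geq\delta_0>1$; conversely, if $\delta(X,\Delta)>1$, set $\delta_0:=\delta(X,\Delta)>1$ and observe $A_{X,\Delta}(F)\geq\delta_0 S(F)$ for all $F$ by definition of the infimum, so $(X,\Delta)$ is uniformly K-stable. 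Finally, by Remark \ref{dreamy_rmk} it suffices to take the infimum over dreamy prime divisors, so the displayed infimum in Definition \ref{delta_dfn} agrees with the one tested in Definition \ref{K_dfn}; this reconciles the two formulations and completes the argument.

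The only subtlety — and the one point I would be careful about — is the conversion of a family of inequalities into a strict inequality for the infimum in the uniform case: one needs that "$\exists\,\delta>1$'' in Definition \ref{K_dfn}\eqref{K_dfn2} is equivalent to "$\delta(X,\Delta)>1$'', and this uses that the existence of a uniform gap $\delta_0>1$ forces $\inf_F A_{X,\Delta}(F)/S(F)\geq\delta_0$, while conversely $\delta(X,\Delta)>1$ itself serves as such a $\delta_0$. No further input is required; the statement is an immediate reformulation and I would present it as such.
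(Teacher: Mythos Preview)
Your proposal is correct and matches the paper's approach: the paper gives no proof at all, introducing the corollary as ``an immediate interpretation'' of Definition~\ref{K_dfn} via Definition~\ref{delta_dfn}, which is exactly the elementary unpacking you carry out. One small point: your final appeal to Remark~\ref{dreamy_rmk} is unnecessary, since both Definition~\ref{K_dfn} and Definition~\ref{delta_dfn} already quantify over \emph{all} prime divisors $F$ over $X$, so there is nothing to reconcile.
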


\begin{remark}\label{basis_rmk}
The original definition of the delta-invariant in \cite{FO} is different from the above 
definition. 
In \cite{FO}, the definition of the delta-invariant relies on the notion of 
\emph{basis type divisors}. 
In \cite{BJ}, the authors showed that the above definition and 
the original definition are equivalent. In \cite{CRZ}, the authors evaluated the 
delta-invariant for some asymptotically log del Pezzo surfaces by analyzing 
basis type divisors. 
\end{remark}

We recall the following result obtained mainly in \cite{KpsMaeda}: 

\begin{proposition}\label{pltK_prop}
Let $(X, \Delta)$ be a $2$-dimensional log Fano pair. 
\begin{enumerate}
\renewcommand{\theenumi}{\arabic{enumi}}
\renewcommand{\labelenumi}{(\theenumi)}
\item\label{pltK_prop1}
If a prime divisor $F$ over $X$ satisfies that 
\[
\frac{A_{X, \Delta}(F)}{S(F)}<\frac{3}{2}, 
\]
then $F$ is dreamy over $(X, \Delta)$. 
In fact, the minimal resolution of the extraction of $F$ is a Mori dream space 
in the sense of \cite{HK}. 
\item\label{pltK_prop2}
If a prime divisor $F$ over $X$ is not plt-type over $(X, \Delta)$, then there exists 
a plt-type prime divisor $F'$ over $(X, \Delta)$ such that 
\[
\frac{A_{X, \Delta}(F)}{S(F)}>\frac{A_{X, \Delta}(F')}{S(F')} 
\]
and $c_X(F)\supset c_X(F')$, where $c_X(F)$ is the center of $F$ at $X$. 
\item\label{pltK_prop3}
Assume that an exceptional prime divisor $F$ over $X$ admits the extraction 
$\sigma\colon Y\to X$. Set 
\[
f:=-\frac{1}{(F^{\cdot 2})_Y}, \quad V:=((-(K_X+\Delta))^{\cdot 2})
\]
and 
\[
\vol(x):=\vol_Y\left(\sigma^*(-(K_X+\Delta))-xF\right). 
\]
\begin{enumerate}
\renewcommand{\theenumii}{\roman{enumii}}
\renewcommand{\labelenumii}{\rm{(\theenumii)}}
\item\label{pltK_prop31}
Assume that $\vol(\varepsilon)>0$ for some $\varepsilon\in\R_{>0}$. Then we have 
\[
\vol(x)\leq \frac{\left({\vol}'(\varepsilon)\right)^2}{4\vol(\varepsilon)}
\left(x-\varepsilon+\frac{2\vol(\varepsilon)}{{\vol}'(\varepsilon)}\right)^2
\]
for any $x\geq \varepsilon$. 
\item\label{pltK_prop32}
If $\sigma^*(-(K_X+\Delta))-\varepsilon F$ is nef on $Y$ for some 
$\varepsilon\in\R_{>0}$, then we have 
\[
S(F)\leq\frac{1}{3}\left(\frac{V f}{\varepsilon}+\varepsilon\right).
\]
\end{enumerate}
\end{enumerate}
\end{proposition}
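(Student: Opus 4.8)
The proof of Proposition~\ref{pltK_prop} splits into three essentially independent parts, and the plan is to treat each separately.

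For part~\eqref{pltK_prop1}, the plan is to start from the hypothesis $A_{X,\Delta}(F)/S(F)<3/2$ and produce an explicit geometric model. First I would pass to the extraction-candidate: on a birational model $Y\to X$ where $F$ lives as a $\Q$-Cartier divisor, I would run a $(K_Y+\cdot)$-type MMP or analyze the Zariski decomposition of $\sigma^*L-xF$ to understand $\vol_{\tilde X}(\pi^*L-xF)$ near the pseudoeffective threshold. The key point is that $A/S<3/2$ forces $F$ to have small log discrepancy relative to its ``size'', which in dimension two restricts the singularities and the Picard number of the minimal resolution $W$ of the extraction. I would then invoke the characterization of Mori dream surfaces (all effective divisors have finitely generated section rings, equivalently the Cox ring is finitely generated, cf.\ \cite{HK}): on a smooth projective surface with big anticanonical-type behavior along the relevant divisors, the cone of curves is polyhedral, hence $W$ is a Mori dream space, hence $F$ is dreamy over $(X,\Delta)$. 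The main obstacle here is making the numerical bound $3/2$ interact cleanly with the classification of the possible $W$; I expect one needs the elementary fact that $S(F)\le \frac{1}{3}(\text{something})$ type estimates (as in \eqref{pltK_prop32}) to convert $A/S<3/2$ into a bound on $F^{\cdot 2}$ and on how many blowups are needed to realize $F$.

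For part~\eqref{pltK_prop2}, given a non-plt-type $F$, the plan is to construct $F'$ by a local-to-global argument. Since $(Y,\Delta_Y+F)$ is not plt, there is a non-klt place other than $F$ meeting $F$; I would take $F'$ to be (the divisor computing) such a place, chosen to have center $c_X(F')\subseteq c_X(F)$. To compare the ratios, I would use that $A_{X,\Delta}(F')$ is small (it is a non-klt place for $\Delta_Y+F$, so its log discrepancy with respect to $(X,\Delta)$ is controlled by the adjunction on $F$) while $S(F')$ is not too small because $c_X(F')$ sits inside $c_X(F)$, so any divisor $\pi^*L-xF$ that has volume also pushes $F'$ into its support. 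The inequality $A(F)/S(F)>A(F')/S(F')$ should come from a direct estimate: $A(F')\le A(F)\cdot(\text{ratio of how deep the centers are})$ combined with $S(F')\ge S(F)$ up to the same factor. The subtlety is the direction of the inequalities — I'd want strictness, which forces a careful choice of $F'$ (e.g.\ the place with maximal $S$ among non-klt places inside $c_X(F)$). The inclusion of centers is the easy part.

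For part~\eqref{pltK_prop3}, this is pure calculus plus the concavity of $x\mapsto \vol(x)^{1/2}$. For \eqref{pltK_prop31}: since $\vol$ is $\sC^1$ and $\vol^{1/2}$ is concave (Brunn--Minkowski / the log-concavity of volume, cf.\ \cite{L1,L2,BFJ}) on the interval where $\vol>0$, the graph of $\vol^{1/2}$ lies below its tangent line at $\varepsilon$, giving $\vol(x)^{1/2}\le \vol(\varepsilon)^{1/2}+\frac{\vol'(\varepsilon)}{2\vol(\varepsilon)^{1/2}}(x-\varepsilon)$; squaring and rewriting yields exactly the displayed bound (and the bound is trivially true once the right-hand side becomes $0$, since $\vol\ge 0$). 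For \eqref{pltK_prop32}: if $\sigma^*L-\varepsilon F$ is nef, then on $[0,\varepsilon]$ we have $\vol(x)=(\sigma^*L-xF)^{\cdot 2}=V-2x(\sigma^*L\cdot F)_Y+x^2(F^{\cdot 2})_Y$, and one checks $(\sigma^*L\cdot F)_Y=\varepsilon\,(F^{\cdot 2})_Y\cdot(-1)+\dots$; more cleanly, nefness of $\sigma^*L-\varepsilon F$ plus $(F^{\cdot 2})_Y=-1/f<0$ gives $(\sigma^*L\cdot F)_Y\ge 0$ and the pseudoeffective threshold $\tau$ satisfies $\tau\le \varepsilon+ Vf/\varepsilon$ (solve where the tangent-line bound hits zero, or bound $\vol$ above by the line from $(\,0,V\,)$ extended). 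Then $S(F)=\frac1V\int_0^\tau \vol(x)\,dx\le \frac1V\cdot\frac{V}{3}\big(\tfrac{Vf}{\varepsilon}+\varepsilon\big)$ after using the concave-square bound to replace the integrand by the area under the tangent parabola. The only thing to be careful about is the normalization of $f$ and matching the endpoint of the tangent parabola with $\varepsilon+Vf/\varepsilon$; this is a short computation. I expect part~\eqref{pltK_prop1} to be the real heart of the argument, with \eqref{pltK_prop2} requiring the cleverest choice and \eqref{pltK_prop3} being routine.
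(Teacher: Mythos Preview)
Your plan for part~\eqref{pltK_prop3} is essentially what the paper does, and it is correct. For~\eqref{pltK_prop31}, concavity of $\vol^{1/2}$ and the tangent-line argument is exactly the content of \cite[Proposition~2.7]{KpsMaeda}, which the paper simply cites. For~\eqref{pltK_prop32} you have overcomplicated one step: since $F$ is $\sigma$-exceptional, the projection formula gives $(\sigma^*L\cdot F)_Y=0$ exactly, so on $[0,\varepsilon]$ one has the clean identity $\vol(x)=V-x^2/f$, not merely an inequality. Plugging $\vol(\varepsilon)=V-\varepsilon^2/f$ and $\vol'(\varepsilon)=-2\varepsilon/f$ into the bound from~\eqref{pltK_prop31} gives the parabola $\frac{\varepsilon^2}{f(Vf-\varepsilon^2)}\bigl(x-\tfrac{Vf}{\varepsilon}\bigr)^2$ for $x\ge\varepsilon$, and integrating the two pieces over $[0,\varepsilon]$ and $[\varepsilon,Vf/\varepsilon]$ yields precisely $\tfrac{V}{3}\bigl(\tfrac{Vf}{\varepsilon}+\varepsilon\bigr)$; this is the computation the paper spells out.

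For parts~\eqref{pltK_prop1} and~\eqref{pltK_prop2} the paper gives no argument at all: it simply cites \cite[Proposition~3.6]{KpsMaeda} and \cite[Corollary~3.2]{pltK} respectively. Your sketches point in reasonable directions (the actual proof of~\eqref{pltK_prop1} in \cite{KpsMaeda} does go through a Mori-dream-space criterion for the minimal resolution, and the proof of~\eqref{pltK_prop2} in \cite{pltK} does extract a plt center), but the details you describe are too vague to constitute proofs, and your calibration is off: you call~\eqref{pltK_prop1} ``the real heart of the argument'', whereas in this paper it is a black box and all the work is downstream in applying~\eqref{pltK_prop3}. If you want to actually reprove~\eqref{pltK_prop1} and~\eqref{pltK_prop2}, you need to consult those references rather than try to reconstruct them from the statement alone.
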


\begin{proof}
\eqref{pltK_prop1} follows from \cite[Proposition 3.6]{KpsMaeda}. 

\eqref{pltK_prop2} follows from \cite[Corollary 3.2]{pltK}. 

\eqref{pltK_prop31} follows from \cite[Proposition 2.7]{KpsMaeda}. 

\eqref{pltK_prop32} follows from the argument in \cite[\S 4.2]{pltK}. We give the proof 
for the readers' convenience. For $x\in[0, \varepsilon]$, we have 
$\vol(x)=V-x^2/f$. If $\vol(\varepsilon)=0$, then the assertion is trivial. If 
$\vol(\varepsilon)>0$, then we have 
\begin{eqnarray*}
S(F)\leq\frac{1}{V}\left(\int_0^\varepsilon\left(V-\frac{x^2}{f}\right)dx+
\int_\varepsilon^{\frac{Vf}{\varepsilon}}\frac{\varepsilon^2}{f(Vf-\varepsilon^2)}
\left(x-\frac{Vf}{\varepsilon}\right)^2 dx\right)
=\frac{1}{3}\left(\frac{V f}{\varepsilon}+\varepsilon\right)
\end{eqnarray*}
by \eqref{pltK_prop31}. 
\end{proof}

\subsection{Plt-type prime divisors on surfaces}\label{plt_section}

In this section, we assume that $X$ is a smooth projective surface, $C$ is a nonzero 
smooth divisor on $X$, $\beta\in(0,1]\cap\Q$, and $F$ is an exceptional prime divisor 
over $X$ such that $F$ is plt-type over $(X, (1-\beta)C)$, that is, $F$ admits the 
extraction $\sigma\colon Y\to X$ and the pair $(Y, (1-\beta)C^Y+F)$ is a plt pair. 
We recall the notions in \cite[\S 3]{KpsMaeda}. 

\begin{definition}\label{monoidal_dfn}
\begin{enumerate}
\renewcommand{\theenumi}{\arabic{enumi}}
\renewcommand{\labelenumi}{(\theenumi)}
\item\label{monoidal_dfn1}
For the above $F$, we construct the sequence 
\[
\pi\colon \tilde{X}=X_m\to X_{m-1}\to\cdots\to X_1\to X_0=X
\]
of monoidal transforms inductively as follows: 
\begin{enumerate}
\renewcommand{\theenumii}{\roman{enumii}}
\renewcommand{\labelenumii}{\rm{(\theenumii)}}
\item\label{monoidal_dfn11}
If $F$ is exceptional over $X_i$, then let $\pi_{i+1}\colon X_{i+1}\to X_i$ be the 
blowup along $p_{i+1}:=c_{X_i}(F)\in X_i$ and let $F_{i+1}\subset X_{i+1}$ be the 
exceptional divisor of $\pi_{i+1}$. 
\item\label{monoidal_dfn12}
If $F\subset X_i$, then we set $m:=i$ and we stop the construction. 
\end{enumerate}
We call the above the \emph{sequence of monoidal transforms with respects to $F$}. 
The morphism factors through the morphism $\sigma$. The induced morphism 
$\nu\colon\tilde{X}\to Y$ is nothing but the minimal resolution of $Y$ (see 
\cite[Lemma 3.3 (4)]{KpsMaeda}). 
\item\label{monoidal_dfn2}
For any $2\leq i\leq m$, we set the integer $0\leq q(i)\leq i-2$ as follows: 
\begin{enumerate}
\renewcommand{\theenumii}{\roman{enumii}}
\renewcommand{\labelenumii}{\rm{(\theenumii)}}
\item\label{monoidal_dfn21}
If $p_i\in F_j^{X_{i-1}}$ for some $1\leq j\leq i-2$, we set $q(i):=j$. 
\item\label{monoidal_dfn22}
Otherwise, we set $q(i):=0$. 
\end{enumerate}
Moreover, we set 
\[
k:=\max\{2\leq i\leq m\,|\,q(i)=0\}. 
\]
Since $F$ is plt-type over $(X, (1-\beta)C)$, we have $q(i)=0$ for any $2\leq i\leq k$ 
(see \cite[Definition 3.4 (1)]{KpsMaeda}). 
\item\label{monoidal_dfn3}
For any $0\leq i\leq m$, we inductively define 
\begin{enumerate}
\renewcommand{\theenumii}{\roman{enumii}}
\renewcommand{\labelenumii}{\rm{(\theenumii)}}
\item\label{monoidal_dfn31}
$(a_0$, $b_0):=(1$, $0)$, \,\, $(a_1$, $b_1):=(1$, $1)$, 
\item\label{monoidal_dfn32}
$(a_i$, $b_i):=(a_{q(i)}$, $b_{q(i)})+(a_{i-1}$, $b_{i-1})$ for $2\leq i\leq m$. 
\end{enumerate}
We set $(a$, $b):=(a_m$, $b_m)$. 
\item\label{monoidal_dfn4}
For any $0\leq i\leq m$, we define the effective divisor $F_i^*$ on $X_i$ inductively 
as follows: 
\begin{enumerate}
\renewcommand{\theenumii}{\roman{enumii}}
\renewcommand{\labelenumii}{\rm{(\theenumii)}}
\item\label{monoidal_dfn41}
$F_0^*:=0$ (on $X=X_0$), \,\, $F_1^*:=F_1$ on $X_1$, 
\item\label{monoidal_dfn41}
\[
F_i^*:=\left(\pi_{q(i)+1}\circ\cdots\circ\pi_i\right)^*F_{q(i)}^*+\pi_i^*F_{i-1}^*+F_i
\]
on $X_i$ for $2\leq i\leq m$. 
\end{enumerate}
\end{enumerate}
\end{definition}

We recall the following result obtained in \cite{KpsMaeda}.

\begin{proposition}\label{KpM_prop}
\begin{enumerate}
\renewcommand{\theenumi}{\arabic{enumi}}
\renewcommand{\labelenumi}{(\theenumi)}
\item\label{KpM_prop1}
\cite[Lemmas 3.5 (3) and 3.3 (4)]{KpsMaeda}
We have 
\[
\left(F^{\cdot 2}\right)_Y=-\frac{1}{ab} \quad\text{ and }\quad F_m^*=ab\cdot \nu^*F.
\]
\item\label{KpM_prop2}
\cite[Lemma 3.5 (2) and Theorem 5.1 Step 7]{KpsMaeda}
If $p_1\not\in C$, then we set $j_C:=0$. Otherwise, we set 
\[
j_C:=\max\left\{1\leq i\leq k\,|\,p_i\in C^{X_{i-1}}\right\}.
\]
Then we have 
\[
A_{X, \Delta}(F)=a+b-\min\{j_C b,\,\,a\}(1-\beta). 
\]
\item\label{KpM_prop3}
\cite[Lemma 3.5 (1)]{KpsMaeda}
For any $1\leq i\leq m$, $a_i$ and $b_i$ are mutually prime. 
\item\label{KpM_prop4}
\cite[Lemma 3.5 (3)]{KpsMaeda}
We have $k=\lceil a/b\rceil$, that is, $k$ is the minimum integer which is not less than 
$a/b$. 
\item\label{KpM_prop5}
\cite[Lemma 3.5 (4)]{KpsMaeda}
For any $1\leq i\leq k$, we have 
\[
\coeff_{F_i^{\tilde{X}}}F_m^*=\min\{i b, \, a\}.
\]
\end{enumerate}
\end{proposition}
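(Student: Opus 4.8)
The statement to be proved is Proposition~\ref{KpM_prop}, which collects five facts about the combinatorial invariants $a_i$, $b_i$, $F_i^*$, $k$, $A_{X,\Delta}(F)$, and $(F^{\cdot 2})_Y$ attached to a plt-type divisor $F$ via its sequence of monoidal transforms. Since the proposition simply cites \cite{KpsMaeda}, the natural plan is to reprove each item by induction on the length $m$ of the sequence $\pi\colon\tilde X=X_m\to\cdots\to X_0=X$, exploiting the recursive definitions in Definition~\ref{monoidal_dfn}.

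The plan is to proceed as follows. First I would establish \eqref{KpM_prop3}, that $\gcd(a_i,b_i)=1$: one checks $(a_0,b_0)=(1,0)$ and $(a_1,b_1)=(1,1)$ directly, and then observes that for $2\le i\le m$ the pair $(a_i,b_i)=(a_{q(i)},b_{q(i)})+(a_{i-1},b_{i-1})$ is a ``mediant''-type step in the Stern--Brocot sense; since $q(i)\le i-2<i-1$, an inductive hypothesis comparing the two summands (specifically that $a_{q(i)}b_{i-1}-a_{i-1}b_{q(i)}=\pm1$, which itself propagates under the recursion) yields coprimality. Next I would prove \eqref{KpM_prop1}: the identity $F_m^*=ab\cdot\nu^*F$ follows by tracking the intersection numbers $(F_i^*\cdot F_j^{X_i})$ through the recursion for $F_i^*$ in Definition~\ref{monoidal_dfn}\eqref{monoidal_dfn4}, using that each $\pi_i$ is a single blowup so $F_i^2=-1$ on $X_i$ and $(\pi_i^*D\cdot F_i)=0$; combined with $(\nu^*F\cdot F_j^{\tilde X})=0$ for $j\neq m$ and $(\nu^*F)^2=(F^{\cdot2})_Y$, one extracts both the coefficient formula and $(F^{\cdot2})_Y=-1/(ab)$. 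Then \eqref{KpM_prop5}, $\coeff_{F_i^{\tilde X}}F_m^*=\min\{ib,a\}$ for $1\le i\le k$, comes out of the same bookkeeping together with the fact (from Definition~\ref{monoidal_dfn}\eqref{monoidal_dfn2}) that $q(j)=0$ for $2\le j\le k$, so in that initial range the recursion $(a_j,b_j)=(a_0,b_0)+(a_{j-1},b_{j-1})=(1+a_{j-1},b_{j-1})$ forces $b_j=1$ and $a_j=j$ until the pattern breaks; the truncation $\min\{ib,a\}$ reflects exactly the point where the center $p_{k+1}$ first lands on an earlier exceptional curve.

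For \eqref{KpM_prop4}, $k=\lceil a/b\rceil$: by the previous step $k$ is characterized as the largest index with $q(i)=0$; I would show that $q(i)=0$ precisely while $a_{i}/b_{i}$ has not yet ``turned the corner'' past $a/b$, i.e. that the continued-fraction/Stern--Brocot data forces $k-1<a/b\le k$, which is the assertion. Finally \eqref{KpM_prop2}, the log discrepancy formula $A_{X,\Delta}(F)=a+b-\min\{j_Cb,a\}(1-\beta)$: writing $K_{\tilde X}=\pi^*K_X+\sum(\text{coeff})F_i^{\tilde X}$ and $\pi^*C=C^{\tilde X}+\sum(\text{mult})F_i^{\tilde X}$, one evaluates $A_{X,(1-\beta)C}(F)=1+\coeff_F(K_{\tilde X}-\pi^*(K_X+(1-\beta)C))$ using the recursion for the discrepancies of iterated blowups (each blowup contributes $+1$, propagated by the same $(a_i,b_i)$ recursion, giving the $a+b$ term) and the multiplicities of $C^{X_{i-1}}$ at the successive centers $p_i$, which are either $1$ or $0$ according to whether $p_i\in C^{X_{i-1}}$; the plt hypothesis guarantees the relevant multiplicities along $C$ accumulate only through index $j_C$, and the $\min\{j_Cb,a\}$ is the coefficient of $F_m^*$ along $C^{\tilde X}$'s contribution, matching \eqref{KpM_prop5}. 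I expect the main obstacle to be \eqref{KpM_prop2}: correctly bounding which blowup centers lie on the strict transform of $C$ requires the plt-type assumption in an essential way (controlling that $C$ stays smooth and meets the exceptional configuration transversally at each stage), and reconciling the two cases of the $\min$ with the geometry of where $C^{X_{i-1}}$ separates from the chain $F_1,\dots,F_k$ is the delicate point; the other four items are comparatively mechanical inductions on the blowup tower.
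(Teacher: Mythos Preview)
The paper does not prove Proposition~\ref{KpM_prop} at all: each item is simply cited from \cite{KpsMaeda}, so there is no argument to compare against. Your proposal to reprove everything by induction on the blowup tower is the natural approach and is presumably what \cite{KpsMaeda} itself does. Your outlines for \eqref{KpM_prop3}, \eqref{KpM_prop1}, and \eqref{KpM_prop2} are sound: the Stern--Brocot determinant $a_{q(i)}b_{i-1}-a_{i-1}b_{q(i)}=\pm1$ does propagate (using that $q(i+1)\in\{0,q(i),i-1\}$), the intersection bookkeeping identifies $F_m^*$ with $ab\cdot\nu^*F$, and the discrepancy recursion $d_i=d_{q(i)}+d_{i-1}+1$ gives $d_m=a+b-1$ while the multiplicity of $C$ along the centers contributes $\ord_F(C)=\min\{j_Cb,a\}$ as claimed.

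Your handling of \eqref{KpM_prop4} and \eqref{KpM_prop5} is where the sketch thins out. You correctly note $(a_j,b_j)=(j,1)$ for $1\le j\le k$, but the formula $\coeff_{F_i^{\tilde X}}F_m^*=\min\{ib,a\}$ involves the \emph{final} $(a,b)=(a_m,b_m)$, not $(a_i,b_i)$, and both this and $k=\lceil a/b\rceil$ require understanding how the recursion for $F_i^*$ and $(a_i,b_i)$ behaves in the range $i>k$, where $q(i)\ge k-1$, and then propagating back to the initial chain $F_1,\dots,F_k$. Your phrases ``turned the corner'' and ``reflects exactly the point'' gesture at the right picture but do not indicate how the induction is organized past step $k$, which is where the content of these two items actually lives. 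This is not a wrong approach, just an incomplete one: filling in the tail $i>k$ (e.g.\ by showing inductively that $\coeff_{F_{k-1}}F_i^*=(k-1)b_i$ and $\coeff_{F_k}F_i^*=a_i$ for $i\ge k$) would close the gap.
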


We use the following lemma many times in \S \ref{S7_section}--\S \ref{caseII_section}. 

\begin{lemma}\label{mono_lem}
For any smooth curve $B\subset X$ with $p_1\in B$, let us set 
\[
j_B:=\max\left\{1\leq i\leq k\,|\,p_i\in B^{X_{i-1}}\right\}
\]
as in Proposition \ref{KpM_prop} \eqref{KpM_prop2}. 
\begin{enumerate}
\renewcommand{\theenumi}{\arabic{enumi}}
\renewcommand{\labelenumi}{(\theenumi)}
\item\label{mono_lem1}
If $p_1\in C$ and $\beta\leq 1/2$, then we have $j_C=1$ or $k$. 
\item\label{mono_lem2}
We have 
\[
\left(B^Y\cdot F\right)_Y=\begin{cases}
\frac{j_B}{a} & \text{if }j_B<k, \\
\frac{1}{b} & \text{if }j_B=k. 
\end{cases}\]
\item\label{mono_lem3}
We have 
\[
\left(B^{\cdot 2}\right)_X-\left(B^{Y\cdot 2}\right)_Y=\begin{cases}
\frac{j_B^2 b}{a} & \text{if }j_B<k, \\
\frac{a}{b} & \text{if }j_B=k. 
\end{cases}\]
\item\label{mono_lem4}
\begin{enumerate}
\renewcommand{\theenumii}{\roman{enumii}}
\renewcommand{\labelenumii}{\rm{(\theenumii)}}
\item\label{mono_lem41}
If $j_B<k$, then we have
\[
\coeff_{F_i^{\tilde{X}}}\nu^*\left(B^Y\right)=\frac{i}{a}(a-j_B b)
\]
for any $1\leq i\leq j_B$. 
\item\label{mono_lem42}
If $j_B=k$, then we have 
\[
\coeff_{F_i^{\tilde{X}}}\nu^*\left(B^Y\right)=\begin{cases}
0 & \text{if }1\leq i\leq k-1,\\
k-\frac{a}{b} & \text{if }i=k.
\end{cases}\]
\end{enumerate}
\end{enumerate}
\end{lemma}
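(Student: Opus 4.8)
The plan is to reduce everything to the combinatorics of the sequence of monoidal transforms $\pi\colon\tilde X=X_m\to\cdots\to X_0=X$ and to the auxiliary data $(a_i,b_i)$, $q(i)$, $F_i^*$ already set up in Definition \ref{monoidal_dfn}, using Proposition \ref{KpM_prop} as the main input. First I would record the elementary fact that, for the curve $B$ with $p_1\in B$, the centers $p_i$ lie on $B^{X_{i-1}}$ exactly for $1\le i\le j_B$ among the first $k$ blowups, and that since $F$ is plt-type we have $q(i)=0$ for $2\le i\le k$ (so the first $k$ blowups form a chain, each infinitely near to the previous one only through the $F_{i-1}$'s). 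This is the place where the plt hypothesis is used, and where part \eqref{mono_lem1} comes out: if $p_1\in C$ and $\beta\le 1/2$, then the divisor $(Y,(1-\beta)C^Y+F)$ being plt forces the chain of centers lying on $C$ to be either just $p_1$ or the whole chain $p_1,\dots,p_k$ — an intermediate $j_C$ with $1<j_C<k$ would produce a non-plt point on the extraction; I would extract this from the discrepancy computation in Proposition \ref{KpM_prop} \eqref{KpM_prop2} together with \cite[Definition 3.4]{KpsMaeda}.

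For \eqref{mono_lem2} I would compute the intersection number on $Y$ by pulling back to the minimal resolution $\nu\colon\tilde X\to Y$: by the projection formula $(B^Y\cdot F)_Y=(\nu^*B^Y\cdot\nu^*F)_{\tilde X}$, and by Proposition \ref{KpM_prop} \eqref{KpM_prop1} this equals $\frac{1}{ab}(\nu^*B^Y\cdot F_m^*)_{\tilde X}$. Now $\nu^*B^Y$ differs from $\pi^*B$ by a combination of the $F_i^{\tilde X}$'s whose coefficients are precisely the multiplicities of $B$ along the successive centers, i.e.\ $\coeff_{F_i^{\tilde X}}\nu^*B^Y$ is controlled by whether $i\le j_B$; and $(F_m^*\cdot F_i^{\tilde X})_{\tilde X}$ is read off from the telescoping definition of $F_m^*$ together with $\coeff_{F_i^{\tilde X}}F_m^*=\min\{ib,a\}$ from Proposition \ref{KpM_prop} \eqref{KpM_prop5}. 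Running the bookkeeping — splitting into the cases $j_B<k$ and $j_B=k$, where in the latter case $F=F_m^{X_m}$ meets $B^Y$ along the last exceptional curve with multiplicity governed by $k-a/b$ — yields the two formulas $j_B/a$ and $1/b$. Part \eqref{mono_lem3} is then immediate: $(B^{\cdot 2})_X-(B^{Y\cdot 2})_Y=(\nu^*B^Y\cdot(\pi^*B-\nu^*B^Y))_{\tilde X}$, and expanding $\pi^*B-\nu^*B^Y$ in the $F_i^{\tilde X}$'s reduces it to the same intersection data, giving $j_B^2b/a$ resp.\ $a/b$.

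Finally, \eqref{mono_lem4} is the explicit description of the coefficients of $\nu^*B^Y$ along the exceptional curves $F_i^{\tilde X}$ for $1\le i\le k$. I would prove the formula by induction on $i$ using $(a_i,b_i)=(a_{i-1},b_{i-1})+(1,0)$ (valid since $q(i)=0$ on this range), tracking the multiplicity $\mathrm{mult}_{p_i}B^{X_{i-1}}$ at each step: when $j_B<k$ the strict transform $B^{X_i}$ passes through $p_{i+1}$ for $i<j_B$ and the coefficient accumulates linearly, and a short computation comparing with the known value $\coeff_{F_i^{\tilde X}}F_m^*=\min\{ib,a\}$ pins the proportionality constant at $(a-j_Bb)/a$; when $j_B=k$ the strict transform already separates from the exceptional locus before the last blowup, so the coefficients vanish for $i\le k-1$ and the value at $i=k$ is forced by the self-intersection identity of \eqref{mono_lem3}. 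The main obstacle I anticipate is purely organizational: keeping the two cases $j_B<k$ and $j_B=k$ straight while manipulating the recursive definitions of $F_i^*$ and $(a_i,b_i)$, and making sure the inductive hypothesis is strong enough to close — none of the individual steps is deep, but the indices must be handled with care, and the boundary behavior at $i=k$ (where $F$ itself becomes a divisor) is the delicate point.
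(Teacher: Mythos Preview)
Your strategy is sound and will work, but it is organized differently from the paper's proof, and a couple of your heuristic justifications are imprecise.

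The paper treats the two cases with distinct devices. For $j_B=k$ it passes to an \'etale-local toric model (putting $B$ on the ray $(1,0)$ and $F$ on the ray $(a,b)$), where \eqref{mono_lem2} and \eqref{mono_lem3} become standard toric intersection numbers and the vanishing in \eqref{mono_lem42} for $i\le k-1$ is read off from the dual graph; the value at $i=k$ is then forced by \eqref{mono_lem3}. For $j_B<k$ it builds auxiliary effective divisors $B_i^*$ on each $X_i$ (for $j_B\le i\le m$), verifies inductively that $(B_i^*\cdot F_l^{X_i})=0$ for $l<i$ and $=j_B$ for $l=i$ together with explicit coefficient formulas, and then reads everything off from $\nu^*(B^Y)=(1/a)B_m^*$. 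Part \eqref{mono_lem1} is proved \emph{last}, by computing $A_{Y,(1-\beta)C^Y+F}(F_{j_C})$ explicitly (this uses \eqref{mono_lem4}) and showing that positivity forces $j_C<1/(1-\beta)\le 2$.

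Your route avoids both devices. The cleanest way to phrase what you are actually doing is via the single identity
\[
\nu^*\bigl(B^Y\bigr)\;=\;\pi^*B-\ord_F(B)\cdot\nu^*F\;=\;\pi^*B-\frac{\ord_F(B)}{ab}\,F_m^*,
\]
which follows from $\sigma^*B=B^Y+\ord_F(B)\,F$ on $Y$. Once you compute $\ord_F(B)$ inductively (it equals $j_Bb$ if $j_B<k$ and $a$ if $j_B=k$, by tracking $\ord_{F_i}(B)$ through the chain of blowups), parts \eqref{mono_lem2} and \eqref{mono_lem3} drop out immediately from $(B^Y\cdot F)_Y=\ord_F(B)/(ab)$ and $(B^{\cdot 2})_X-(B^{Y\cdot 2})_Y=\ord_F(B)^2/(ab)$, and part \eqref{mono_lem4} from
\[
\coeff_{F_i^{\tilde X}}\nu^*B^Y\;=\;\ord_{F_i}(B)-\frac{\ord_F(B)}{ab}\,\coeff_{F_i^{\tilde X}}F_m^*
\]
together with Proposition~\ref{KpM_prop}~\eqref{KpM_prop5}. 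This is arguably more uniform than the paper's case split; what it costs is that you must establish $\ord_F(B)=j_Bb$ carefully for $j_B<k$ (the induction beyond level $k$ uses that $q(i)\ge k-1\ge j_B$, so $\ord_{F_i}(B)=j_B\,b_i$ there).

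Two corrections to your sketch. First, your reason for the vanishing in \eqref{mono_lem42} (``the strict transform separates from the exceptional locus'') is not the right mechanism: the vanishing of $\coeff_{F_i^{\tilde X}}\nu^*B^Y$ for $i\le k-1$ is not merely about $B^{\tilde X}$ missing $F_i^{\tilde X}$, but follows from the identity above because $\ord_{F_i}(B)=i$ and $\coeff_{F_i^{\tilde X}}F_m^*=ib$ cancel exactly when $\ord_F(B)=a$. Second, part \eqref{mono_lem1} logically depends on \eqref{mono_lem4} applied to $B=C$, not just on Proposition~\ref{KpM_prop}~\eqref{KpM_prop2}; you should prove it last, as the paper does.
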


\begin{proof}
Assume that $j_B=k$. The assertions \eqref{mono_lem2}, \eqref{mono_lem3} and 
\eqref{mono_lem4} are \'etale local around a neighborhood of $p_1\in(X, B)$. 
Thus, as in the proof of \cite[Lemma 3.5]{KpsMaeda}, we may assume that: 
\begin{itemize}
\item
$X$ corresponds to the complete fan $\Sigma$ in $N_\R$ with the set of 
$1$-dimensional cones equal to the set 
\[
\{\R_{\geq 0}(0,1), \,\,\R_{\geq 0}(-1,-1), \,\,\R_{\geq 0}(1,0)\},
\]
$B$ corresponds the $1$-dimensional cone $\R_{\geq 0}(1,0)$ in $\Sigma$,
\item
$Y$ corresponds to the complete fan $\Sigma'$ in $N_\R$ with the set of 
$1$-dimensional cones equal to the set 
\[
\{\R_{\geq 0}(0,1), \,\,\R_{\geq 0}(-1,-1), \,\,\R_{\geq 0}(1,0), \,\,\R_{\geq 0}(a,b)\},
\]
$F$ corresponds the $1$-dimensional cone $\R_{\geq 0}(a,b)$ in $\Sigma'$.
\end{itemize}
Then the assertions \eqref{mono_lem2} and \eqref{mono_lem3} are well-known. 
Let us consider the assertion \eqref{mono_lem4}. We can easily check that 
$\coeff_{F_i^{\tilde{X}}}\nu^*\left(B^Y\right)=0$ for any $1\leq i\leq k-1$ by looking at 
the dual graph of the union of $\pi$-exceptional curves and $B^{\tilde{X}}$. From 
\eqref{mono_lem2}, we have 
\begin{eqnarray*}
\left(B^{{\tilde{X}}\cdot 2}\right)+k-\frac{a}{b}=(B^{\cdot 2})-\frac{a}{b}
=\left(B^{Y\cdot 2}\right)
=\left(\nu^*\left(B^Y\right)\cdot B^{\tilde{X}}\right)
=\left(B^{\tilde{X}\cdot 2}\right)+\coeff_{F_k^{\tilde{X}}}\nu^*\left(B^Y\right).
\end{eqnarray*}

Assume that $j_B<k$. For any $j_B\leq i\leq m$, we define the effective divisor 
$B_i^*$ on $X_i$ as follows: 
\begin{itemize}
\item
For any $j_B\leq i\leq k$, we set
\[
B_i^*:=\sum_{l=1}^{j_B}(i-j_B)l F_l^{X_i}+\sum_{l=j_B+1}^{i-1}(i-l)j_B F_l^{X_i}+i B^{X_i},
\]
where we add nothing for the summation $\sum_{l=j_B+1}^{i-1}$ when 
$i=j_B$ or $i=j_B+1$. 
\item
For any $k+1\leq i\leq m$, we set 
\[
B_i^*:=\left(\pi_{q(i)+1}\circ\cdots\circ\pi_i\right)^*B_{q(i)}^*+\pi_i^*B_{i-1}^*-j_B F_i. 
\]
(Since $q(i)\geq k-1$ for any $i\geq k+1$, the definition makes sense.) 
\end{itemize}
We can inductively check that 
\begin{eqnarray*}
\left(B_i^*\cdot F_l^{X_i}\right)_{X_i} & = & \begin{cases}
0 & \text{if } l<i, \\
j_B & \text{if } l=i, 
\end{cases}\\
\coeff_{B^{X_i}}B_i^* &=& a_i, \\
\coeff_{F_l^{X_i}}B_i^*&=& l(a_i-j_B b_i)\,\, \text{ for any }\,\, 1\leq l\leq j_B.
\end{eqnarray*}
In particular, we have $\nu^*\left(B^Y\right)=(1/a)B_m^*$. Thus we get 
\begin{eqnarray*}
\left(B^Y\cdot F\right)_Y &=& 
\frac{1}{a}\left(B_m^*\cdot F_m^{X_m}\right)_{X_m}=\frac{j_B}{a}, \\
\left(B^{Y\cdot 2}\right)_Y& = &\frac{1}{a}\left(B_m^*\cdot B^{\tilde{X}}\right)
=\left(B^{\tilde{X}\cdot 2}\right)+\frac{1}{a}\coeff_{F_{j_B}^{\tilde{X}}}B_m^* \\
&=&\left(B^{\cdot 2}\right)_X-\frac{j_B^2 b}{a}.
\end{eqnarray*}

The remaining assertion is only \eqref{mono_lem1}. Assume that $j_C<k$. Since 
$\coeff_{F_{j_C}^{\tilde{X}}}K_{\tilde{X}/X}=j_C$ (see \cite[Lemma 3.5 (2)]{KpsMaeda}) and 
\begin{eqnarray*}
&&\nu^*\left(K_Y+(1-\beta)C^Y+F\right)\\
&=&\nu^*\left(\sigma^*K_X+(1-\beta)C^Y+(a+b)F\right)\\
&=&K_{\tilde{X}}-K_{\tilde{X}/X}+\frac{a+b}{ab}F_m^*+\frac{1-\beta}{a}C_m^*,
\end{eqnarray*}
we get 
\[
A_{Y, (1-\beta)C^Y+F}\left(F_{j_C}\right)
=1+j_C-\frac{j_C}{a}(a+b)-\frac{1-\beta}{a}j_C(a-j_C b)
\]
by Proposition \ref{KpM_prop}. 
Since $F$ is plt-type over $(X, (1-\beta)C)$, we have 
\[
0<A_{Y, (1-\beta)C^Y+F}\left(F_{j_C}\right)=1-\frac{j_C}{a}\left(b+(1-\beta)(a-j_C b)\right).
\]
This implies that $j_C<1/(1-\beta)$. Since $\beta\leq 1/2$, we get $j_C=1$. 
\end{proof}

\subsection{Basic properties of asymptotically log del Pezzo surfaces}\label{dP_section}

In this section, we assume that $(X, C)$ is an asymptotically log del Pezzo surface 
of type $(\operatorname{I.9B.}n)$, $\eta\colon X\to \pr^1$ is the 
anti-log-canonical morphism, and $q_1$, $q_2\in C$ is the ramification points of 
$\eta|_C\colon C\to \pr^1$. We remark the following easy lemma. 

\begin{lemma}\label{easy_lem}
\begin{enumerate}
\renewcommand{\theenumi}{\arabic{enumi}}
\renewcommand{\labelenumi}{(\theenumi)}
\item\label{easy_lem1}
For any $\beta\in(0$, $1)$, we have $-(K_X+(1-\beta)C)\sim_\R\beta C+l$, where 
$l$ is a fiber of $\eta$. 
\item\label{easy_lem2}
For any $\beta\in(0$, $1)\cap\Q$, we have 
\[
\left(-(K_X+(1-\beta)C)\cdot C\right)=(4-n)\beta+2.
\]
In particular, the pair $(X, (1-\beta)C)$ is a $2$-dimensional log Fano pair if and only if 
$(4-n)\beta+2>0$. 
\item\label{easy_lem3}
For any birational morphism $\theta\colon X\to X'$ over $\pr^1$ with the Picard rank 
of $X'$ bigger than $2$ obtained by contracting numbers of $(-1)$-curves on 
$X\xrightarrow{\eta}\pr^1$. Then the pair $(X', \theta_*C)$ is also an 
asymptotically log del Pezzo surface of type $(\operatorname{I.9B.}n')$, where 
$n'+2$ is the Picard rank of $X'$. 
\end{enumerate}
\end{lemma}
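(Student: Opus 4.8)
The statement to prove is Lemma~\ref{easy_lem}, which collects three basic facts about type $(\operatorname{I.9B.}n)$ surfaces. Here is how I would organize the argument.

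\textbf{Setup and intersection numbers.} First I would fix notation on $X$. Write $\theta_0\colon X\to\pr^1\times\pr^1$ for the blowup at the $n$ distinct points $p_1,\dots,p_n\in C'$ with exceptional curves $E_1,\dots,E_n$, and let $f_1,f_2$ be the pullbacks of the two rulings. The condition that $X\to\pr^1\times\pr^1\xrightarrow{p}\pr^1$ is a conic bundle forces the $p_i$ to lie on $n$ distinct fibers of $p$, so $p\circ\theta_0$ coincides with $\eta$ and a general fiber $l$ of $\eta$ is just $f_1$. I would record $\Pic(X)=\Z f_1\oplus\Z f_2\oplus\bigoplus_i\Z E_i$ with $f_1^2=f_2^2=0$, $f_1\cdot f_2=1$, $E_i^2=-1$, and cross terms zero. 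Since $C'\in|\sO(1,2)|$ we get $C=\theta_{0*}^{-1}C'=f_1+2f_2-\sum_i E_i$, hence $C^2=4-n$ and $C\cdot f_1=1$. Also $-K_X=2f_1+2f_2-\sum_i E_i$, so $-K_X\cdot f_1=2$, $-K_X\cdot C=(2f_1+2f_2-\sum E_i)\cdot(f_1+2f_2-\sum E_i)=2+2-n=4-n$, and $-K_X-C=f_1$.

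\textbf{Parts \eqref{easy_lem1} and \eqref{easy_lem2}.} Part~\eqref{easy_lem1} is then immediate: $-(K_X+(1-\beta)C)=-K_X-C+\beta C\sim_\R f_1+\beta C=\beta C+l$. For part~\eqref{easy_lem2}, pair this with $C$: $(-(K_X+(1-\beta)C)\cdot C)=\beta C^2+(f_1\cdot C)=(4-n)\beta+2$. For the ``log Fano'' claim, I would argue both directions. The pair $(X,(1-\beta)C)$ is a log Fano pair iff $-(K_X+(1-\beta)C)$ is ample and $(X,(1-\beta)C)$ is klt; the klt condition holds automatically since $X$ is smooth and $C$ is a smooth prime divisor with coefficient $1-\beta<1$. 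For ampleness I would use Nakai--Moishezon or, more cleanly, note that $-(K_X+(1-\beta)C)\sim_\R \beta C+l$ with $l$ a fiber of the conic bundle $\eta$: $l$ is nef, and $\beta C+l$ restricted to a general fiber has positive degree, restricted to $C$ has degree $(4-n)\beta+2$, and one checks positivity on the finitely many other curve classes (the $E_i$ and components of singular fibers) — each of these meets $\beta C + l$ nonnegatively, and strict positivity on all curves reduces to $(4-n)\beta+2>0$ once one observes that $C$ is the only curve class whose $\beta C+l$-degree could be nonpositive for $\beta$ small. Conversely if $(4-n)\beta+2\le 0$ then $C$ has nonpositive degree, so $-(K_X+(1-\beta)C)$ is not ample.

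\textbf{Part \eqref{easy_lem3}.} This is the most delicate part and I expect it to be the main obstacle, since one must check the full list of defining properties of type $(\operatorname{I.9B.}n')$ survives contraction. Let $\theta\colon X\to X'$ contract some $(-1)$-curves that are sections of $\eta$ restricted to singular fibers — more precisely, $\theta$ contracts $(-1)$-curves lying in fibers of $\eta$, one from each of several singular fibers, so that $X'\to\pr^1$ remains a conic bundle (Picard rank of $X'$ is $n'+2$ with $n'=n-(\text{number of contracted curves})$). I would verify: (a) $X'\to\pr^1$ is still a conic bundle, i.e.\ $-K_{X'}$ is relatively ample — this follows because contracting a $(-1)$-curve in a singular fiber of a conic bundle either keeps it singular (if the fiber was a chain of more than two $(-1)$-curves) or makes it a smooth $\pr^1$; in all cases $-K_{X'}$ stays relatively ample; (b) $\theta_*C$ is smooth — since $C$ is a smooth section-like curve ($C\cdot f_1=1$) meeting each fiber transversally in one point, and the contracted $(-1)$-curves are chosen among the fiber components \emph{not} meeting $C$ (this is exactly the constraint that keeps the total space of type $(\operatorname{I.9B.})$), $\theta$ is an isomorphism near $C$, so $\theta_*C\cong C$ is smooth; (c) $X'$ is the blowup of $\pr^1\times\pr^1$ at $n'$ distinct points on a smooth $(1,2)$-curve with the conic-bundle condition — obtained by composing the birational morphism $X'\to\pr^1\times\pr^1$ (which exists since $X'$ is a smooth rational surface with a conic bundle structure of Picard rank $n'+2$, and running the MMP / contracting the remaining $E_i$-images down to a minimal ruled surface, which must be $\F_0=\pr^1\times\pr^1$ or $\F_1$; in the $\F_1$ case one further contracts to reach $\pr^1\times\pr^1$), and $\theta_*C$ pushes to a smooth $(1,2)$-curve through the $n'$ centers. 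The cleanest route is probably to invoke Theorem~\ref{CR_thm} itself: show $(X',\theta_*C)$ is an asymptotically log del Pezzo surface with $\dim Z'=1$ (ampleness of $-(K_{X'}+(1-\beta)\theta_*C)$ for small $\beta$ follows from part~\eqref{easy_lem2} applied formally with $n'$, i.e.\ $(4-n')\beta+2>0$ for $\beta$ small, together with the conic-bundle structure), and then Theorem~\ref{CR_thm} forces it into the list; since its Picard rank $n'+2\ge 3$, it must be of type $(\operatorname{I.9B.}n')$. The hard part is making sure the contraction is ``admissible'' — that the contracted curves do not meet $C$ and the conic-bundle structure is preserved — which is built into the hypothesis ``obtained by contracting numbers of $(-1)$-curves on $X\xrightarrow{\eta}\pr^1$'', i.e.\ $(-1)$-curves that are vertical for $\eta$; I would make this reading explicit at the start of the proof.
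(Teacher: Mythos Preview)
Your arguments for parts \eqref{easy_lem1} and \eqref{easy_lem2} are fine and match the paper's (terse) proof, modulo a slip: you write $C\cdot f_1=1$, but in fact $C\cdot f_1=(f_1+2f_2-\sum E_i)\cdot f_1=2$. This is not just a typo---it feeds directly into the error in part \eqref{easy_lem3}.

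In part \eqref{easy_lem3} you assert that $C$ is ``section-like'' with $C\cdot f_1=1$ and that the contracted vertical $(-1)$-curves are ``chosen among the fiber components not meeting $C$'', so that $\theta$ is an isomorphism near $C$. This is false on both counts. The curve $C$ is a \emph{bisection} of $\eta$ (indeed $\eta|_C$ is the double cover with ramification points $q_1,q_2$ that the whole paper is about), and \emph{every} vertical $(-1)$-curve meets $C$: each singular fiber is $E_i+\tilde F_i$ with $\tilde F_i=f_1-E_i$, and one computes $C\cdot E_i=C\cdot\tilde F_i=1$. So $\theta$ is never an isomorphism near $C$, and your smoothness argument for $\theta_*C$ collapses.

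The paper's proof of \eqref{easy_lem3} hinges on exactly this point: it records that for any vertical $(-1)$-curve $l_1$ one has $(C\cdot l_1)=1$ and that the singular fiber is $l_1+l_2$ with $l_2$ another $(-1)$-curve. From $(C\cdot l_1)=1$ (transversal intersection at a single point) it follows that contracting $l_1$ keeps the image of $C$ smooth---this is just the inverse of blowing up a smooth point on a smooth curve. Iterating, $\theta_*C$ is smooth. Your route through Theorem~\ref{CR_thm} would then go through, but you still need this correct smoothness argument (and a word on why the anti-log-canonical morphism of $X'$ again has one-dimensional image, which follows from $-(K_{X'}+\theta_*C)\sim\theta_*l$). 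The direct description of $X'$ as a blowup of $\pr^1\times\pr^1$ also works once you observe that from $X'$ you may contract one component of each remaining singular fiber to reach a Hirzebruch surface carrying a smooth bisection of self-intersection $4-n'$ linearly equivalent to a $(1,2)$-class; but in either approach the key input you are missing is the intersection number $(C\cdot l_1)=1$, not $0$.
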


\begin{proof}
\eqref{easy_lem1} is trivial. \eqref{easy_lem2} follows from Nakai's criterion for 
ampleness. For \eqref{easy_lem3}, for any $(-1)$-curve $l_1\subset X$ with 
$\eta_*l_1=0$, we have $(C\cdot l_1)=1$ and there exists another $(-1)$-curve 
$l_2\subset X$ with $\eta_*l_2=0$ such that $l_1+l_2$ is a fiber of $\eta$. 
Thus the assertion follows. 
\end{proof}

The following proposition is easy but important in this article. 

\begin{proposition}\label{easy_prop}
Take any $\beta\in(0$, $1)\cap\Q$ with $(4-n)\beta+2>0$ and set 
$L:=-(K_X+(1-\beta)C)$. Consider a prime divisor $F$ over $X$. Take any birational 
morphism $\theta\colon X\to X'$ over $\pr^1$ with the Picard rank of $X'$ bigger 
than $2$ obtained by contracting numbers of $(-1)$-curves on 
$X\xrightarrow{\eta}\pr^1$ as in Lemma \ref{easy_lem} \eqref{easy_lem3}. If 
$\theta$ is an isomorphism at the generic point of $c_X(F)$, then we have 
\[
S_L(F)\leq \frac{\left({L'}^{\cdot 2}\right)}{\left(L^{\cdot 2}\right)}S_{L'}(F),
\]
where $L':=-\left(K_{X'}+(1-\beta)\theta_*C\right)$. 
\end{proposition}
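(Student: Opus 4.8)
The plan is to compare the expansions of $L$ and $L'$ along $F$ via the pullback $\theta^*L'$. Since $\theta\colon X\to X'$ contracts only $(-1)$-curves $l_1$ (with $\eta_*l_1=0$, hence $(C\cdot l_1)=1$) lying on fibers of $\eta$, and $-(K_{X'}+(1-\beta)\theta_*C)\sim_\R\beta\,\theta_*C+l'$ with $l'$ a fiber of $\eta'$ by Lemma \ref{easy_lem} \eqref{easy_lem1}, I would first write $\theta^*L'=L+E$ where $E$ is an effective $\theta$-exceptional $\Q$-divisor supported on the contracted $(-1)$-curves. Concretely, $\theta^*(\beta\,\theta_*C+l')=\beta C+\theta^*l'=\beta C+l+(\text{effective exceptional})$ using that each contracted curve appears in the total transform of a fiber; and since $-(K_X+(1-\beta)C)\sim_\R\beta C+l$ by the same lemma, we get $\theta^*L'\sim_\R L+E$ with $E\geq 0$ exceptional.

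Second, I would pass to a common log resolution $\pi\colon\tilde X\to X$ realizing $F$, and note that since $\theta$ is an isomorphism near the generic point of $c_X(F)$, the center of $F$ is disjoint from the support of $E$; equivalently $\coeff_F(\pi^*E)=0$ and more strongly $\pi^*(\theta^*L')-xF=\pi^*L-xF+\pi^*E$ with the extra term $\pi^*E$ effective and not meeting $F$. Hence $\vol_{\tilde X}(\pi^*(\theta\circ\pi)^*L'-xF)\geq\vol_{\tilde X}(\pi^*L-xF)$ for all $x\geq 0$, because adding an effective divisor never decreases the volume (monotonicity of $\vol$). Integrating in $x$ gives
\[
\bigl((L')^{\cdot 2}\bigr)\,S_{L'}(F)=\int_0^\infty\vol_{\tilde X}\bigl((\theta\circ\pi)^*L'-xF\bigr)\,dx\ \geq\ \int_0^\infty\vol_{\tilde X}\bigl(\pi^*L-xF\bigr)\,dx=\bigl(L^{\cdot 2}\bigr)\,S_L(F),
\]
which is exactly the claimed inequality after dividing by $\bigl((L')^{\cdot 2}\bigr)$. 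Here I am using that $\theta\circ\pi$ is itself a log resolution extracting $F$, so the integral formula for $S_{L'}(F)$ from Definition \ref{vst_dfn} \eqref{vst_dfn2} applies with $\tilde X$ in place of its own resolution.

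The main technical point — and the step I expect to need the most care — is justifying $\theta^*L'\sim_\R L+E$ with $E$ effective exceptional. This is really a statement about how fibers of $\eta$ behave under contraction of the $(-1)$-curves: by Lemma \ref{easy_lem} \eqref{easy_lem3} each such contraction is one half of a reducible fiber, so the total transform of the corresponding fiber on $X'$ is the reduced fiber on $X$ plus the exceptional curve with positive coefficient, and the $\beta C$ terms match under $\theta_*$ since $C$ is not contracted. One should do this one blowdown at a time, or simply observe that $-K_X$ and $-K_{X'}$ differ by the discrepancy divisor (effective exceptional, as $\theta$ contracts $(-1)$-curves so the pair is terminal), while $(1-\beta)C$ pulls back to $(1-\beta)\theta_*C$ plus an effective exceptional correction; combining, $\theta^*L'-L$ is effective and exceptional. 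Once this is in hand the rest is just monotonicity of the volume and the integral formula, with no genuine obstacle.
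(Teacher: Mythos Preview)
Your approach is correct and matches the paper's: establish that $\theta^*L'-L$ is effective, then apply monotonicity of the volume and integrate. The paper's one-line justification of effectivity is exactly your terminality remark at the end (the pair $(X',(1-\beta)\theta_*C)$ is terminal, so $K_X+(1-\beta)C\geq\theta^*(K_{X'}+(1-\beta)\theta_*C)$); note that in your first direct computation the effective exceptional contribution actually comes from $\theta^*(\theta_*C)=C+\sum l_i$ rather than from $\theta^*l'$, which is already a full fiber of $\eta$, so $E=\beta\sum l_i$.
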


\begin{proof}

By Lemma \ref{easy_lem}, the pair $(X', \theta_*C)$ is an asymptotically log del Pezzo 
surface and $L'$ is ample. Since $(X', (1-\beta)\theta_*C)$ has only terminal 
singularities, we get 
\[
K_X+(1-\beta)C\geq \theta^*(K_{X'}+(1-\beta)\theta_*C). 
\]
Thus $\theta^*L'-L$ is effective. Hence the assertion follows. 
Indeed, we have  
\[
\vol_{\tilde{X}}\left(\pi^*\theta^*L'-x F\right)\geq \vol_{\tilde{X}}\left(\pi^*L-x F\right)
\]
for any $x\in\R_{\geq 0}$. 
\end{proof}

\section{On the del Pezzo surface of degree seven}\label{S7_section}

In this section, we prove the following: 

\begin{thm}\label{S7_thm}
Let $(X, C)$ be an asymptotically log del Pezzo surface of type $(\operatorname{I.9B.}1)$, 
let $\eta\colon X\to \pr^1$ be the anti-log-canonical morphism, and let 
$q_1$, $q_2\in C$ be the ramification points of $\eta|_C\colon C\to\pr^1$. Take any 
prime divisor $F$ over $X$ such that $c_X(F)\not\in\{q_1$, $q_2\}$. Then, for any 
$\beta\in(0$, $1/7)\cap\Q$, we have 
\[
\frac{A_{X, \Delta}(F)}{S_L(F)}\geq \frac{6}{5}, 
\]
where $\Delta:=(1-\beta)C$ and $L:=-(K_X+\Delta)$. 
\end{thm}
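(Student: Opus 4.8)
The goal is to show $A_{X,\Delta}(F)/S_L(F)\geq 6/5$ for every prime divisor $F$ over $X$ with $c_X(F)\notin\{q_1,q_2\}$. By Corollary \ref{delta_cor} and Proposition \ref{pltK_prop} \eqref{pltK_prop1}--\eqref{pltK_prop2} it is enough to treat (a) prime divisors $F$ on $X$ itself, and (b) plt-type prime divisors $F$ over $(X,\Delta)$ whose center is a point $p_1\neq q_1,q_2$ — when $c_X(F)$ is such a point, the plt-type divisor produced by Proposition \ref{pltK_prop} \eqref{pltK_prop2} has the same center, so the hypothesis survives. Throughout I use that $X$ is a del Pezzo surface of degree seven: writing $\theta_0\colon X\to\pr^1\times\pr^1$ for the contraction of the exceptional curve $E$, the singular fibre of $\eta$ is $E+\tilde F_1$ (with $\tilde F_1$ the strict transform of a ruling fibre through the blown up point), $C^2=3$, $L\cdot C=3\beta+2$, and a general fibre $l$ of $\eta$ is nef with $l^2=0$, $L\cdot l=2\beta$, $C\cdot l=2$; the only irreducible curves $B\subset X$ with $B\cdot l=0$ are $E$, $\tilde F_1$ and the general fibres.

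\textbf{Divisors on $X$.} If $F$ is a prime divisor on $X$ with $F\cdot l\geq 1$, then since $l$ is nef the divisor $L-xF$ is not pseudoeffective for $x>2\beta$, so $S_L(F)\leq 2\beta$; as $F\neq C$ gives $A_{X,\Delta}(F)=1$ we get $A/S\geq 1/(2\beta)>6/5$. The remaining prime divisors on $X$ are $C$, $E$, $\tilde F_1$ and the general fibres $l$, and for each I compute $S_L$ from the explicit Zariski decomposition of $L-xF$: for $C$ one checks that $L-xC$ fails to be pseudoeffective already for $x>\beta$ (again because $l$ is nef), whence $S_L(C)=\beta(\beta+2)/(3\beta+4)$ and $A/S=(3\beta+4)/(\beta+2)\geq 2$; the cases $E$, $\tilde F_1$, $l$ likewise give $A/S$ as an explicit rational function of $\beta$, decreasing on $(0,1/7]$ with minimum close to $1.7$.

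\textbf{Plt-type $F$ with $p_1$ off the node of the singular fibre.} If $p_1\notin E$, then $\theta_0$ is an isomorphism near $p_1$; from $K_X+(1-\beta)C=\theta_0^*\bigl(K_{\pr^1\times\pr^1}+(1-\beta)\theta_{0*}C\bigr)+\beta E$ one gets $A_{X,\Delta}(F)=A_{\pr^1\times\pr^1,(1-\beta)\theta_{0*}C}(F)$ (since $\coeff_F\pi^*E=0$), while Proposition \ref{easy_prop} gives $S_L(F)\leq\frac{4(\beta+1)}{3\beta+4}S_{L'}(F)$ with $L'=-(K_{\pr^1\times\pr^1}+(1-\beta)\theta_{0*}C)$; as $\theta_0(p_1)$ is again not a tangency point of the double cover $\theta_{0*}C\to\pr^1$, the problem is reduced to the corresponding estimate for the type $(\operatorname{I.4B})$ pair $(\pr^1\times\pr^1,(1-\beta)\theta_{0*}C)$. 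If $p_1$ lies on $E$ but off the node, contracting $\tilde F_1$ (an isomorphism near $p_1$) reduces matters the same way to the type $(\operatorname{I.3A})$ pair $(\F_1,(1-\beta)\theta_{1*}C)$, again with $\frac{4(\beta+1)}{3\beta+4}$ as the degree ratio. On both of these surfaces the only curves through a point are the one or two rulings and the boundary, so the divisor estimates there have the same shape as in the next step.

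\textbf{Plt-type $F$ with $p_1$ the node $E\cap\tilde F_1$, and the main obstacle.} This occurs only when $q_1,q_2$ both lie off the singular fibre, so $p_1\notin C$ and $A_{X,\Delta}(F)=a+b$ by Proposition \ref{KpM_prop} \eqref{KpM_prop2}. Using the sequence of monoidal transforms with respect to $F$, the only curves on the extraction $Y$ that meet $F$ are $E^Y$ and $\tilde F_1^Y$; one of $j_E,j_{\tilde F_1}$ equals $1$, and Lemma \ref{mono_lem} computes $(E^Y\cdot F)_Y$, $(\tilde F_1^Y\cdot F)_Y$, $(E^Y)^2_Y$, $(\tilde F_1^Y)^2_Y$ from $a,b,j_E$. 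The negative part of the Zariski decomposition of $\sigma^*L-xF$ is supported on $E^Y\cup\tilde F_1^Y$, crossing first one wall and then the other, so $\vol_Y(\sigma^*L-xF)$ is an explicit piecewise‑quadratic function of $x$ that becomes linear of negative slope on its last interval and vanishes at an explicit threshold; integrating yields $S_L(F)$, hence $A/S=(a+b)/S_L(F)$, as an explicit rational function of $a,b,j_E,\beta$ whose infimum over the admissible data and over $\beta\in(0,1/7)$ is $6/5$ (in fact the bound is close to $2$). The main obstacle is exactly this step, together with its avatars on $\pr^1\times\pr^1$ and $\F_1$ in the previous step: the crude bound for $S_L(F)$ coming from the nef threshold alone, Proposition \ref{pltK_prop} \eqref{pltK_prop32}, is too lossy, so one must run the Zariski decomposition uniformly in the combinatorial data and then verify that the resulting family of rational functions of $\beta$ stays above $6/5$ on all of $(0,1/7)$, controlling the degradation as $a,b$ grow.
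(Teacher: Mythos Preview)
Your overall architecture (reduce to plt-type divisors via Proposition \ref{pltK_prop}, then case-split on the location of $p_1$) matches the paper's, but the execution diverges at the crucial point and leaves a genuine gap.

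The paper does \emph{not} reduce to the surfaces $\pr^1\times\pr^1$ or $\F_1$. Instead it exploits the toric structure of $X$ directly: Step~1 computes the barycenter of the moment polytope of $L$, which by \cite[Corollary 7.16]{BJ} gives the exact value $\inf_{F'} A_{X,0}(F')/S_L(F')=3(3\beta+4)/(7\beta^2+12\beta+6)$. This single number disposes of every $F$ with $c_X(F)\not\subset C$ at once (Step~3), including your node case $p_1=E\cap\tilde F_1$, and also kills the sub-case $j_C=1$, $k\ge 3$ (Step~4) without any Zariski decomposition. The remaining work (Steps~5--13) is an explicit Zariski-decomposition computation on $X$ for plt-type $F$ with $p_1\in C$, organised by which toric boundary components pass through $p_1$.

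Your reduction strategy, by contrast, is incomplete in two ways. First, Proposition \ref{easy_prop} as stated requires the target to have Picard rank $>2$, so it does not literally apply to $\pr^1\times\pr^1$ or $\F_1$; the underlying inequality $\theta^*L'-L=\beta E\ge 0$ does hold, but you should say so. More seriously, the degree ratio $(L^{\cdot 2})/({L'}^{\cdot 2})=(3\beta+4)/(4(\beta+1))<1$ means that to conclude $A_{X,\Delta}(F)/S_L(F)\ge 6/5$ you would need $A_{X',\Delta'}(F)/S_{L'}(F)\ge \tfrac{6}{5}\cdot\tfrac{4(\beta+1)}{3\beta+4}$ on the target surface, a bound strictly stronger than $6/5$. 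You never establish this. The phrase ``the divisor estimates there have the same shape as in the next step'' is not a proof: the hardest cases are exactly those with $p_1\in C$, and after your reduction $\theta(p_1)$ still lies on $\theta_*C$, so the same detailed piecewise-quadratic analysis the paper carries out in Steps~5--13 would have to be redone on $\pr^1\times\pr^1$ and $\F_1$, with the tighter target. Your proposal contains none of this, and the sentence ``the only curves through a point are the one or two rulings and the boundary'' is false in general and does not by itself control the Zariski decomposition.

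In short: the paper's toric-barycenter trick in Step~1 is the key idea you are missing. It replaces your unfinished node computation and your $k\ge 3$ case for free, and it leaves only the case $p_1\in C$, which the paper then handles by the explicit (and lengthy) Zariski computations that your proposal postpones to the target surfaces without carrying out.
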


\begin{proof}
The following proof is divided into 13 numbers of steps. 

\noindent\underline{\textbf{Step 1}}\\
Since $X$ is the del Pezzo surface of degree $7$, $X$ corresponds to the complete fan 
in $N_\R$ whose set of $1$-dimensional cones is equal to the set 
\[
\{\R_{\geq 0}(1, 0),\,\,\R_{\geq 0}(1, 1),\,\,\R_{\geq 0}(0, 1),\,\,\R_{\geq 0}(-1, 0),\,\,
\R_{\geq 0}(0, -1)\}.
\]
Let $E_1$, $E$, $E_2$, $E_1^\infty$, $E_2^\infty$ be the torus invariant prime divisor 
on $X$ corresponds to 
$\R_{\geq 0}(1, 0)$, $\R_{\geq 0}(1, 1)$, $\R_{\geq 0}(0, 1)$, 
$\R_{\geq 0}(-1, 0)$, $\R_{\geq 0}(0, -1)$, respectively. 
We can assume that $C\sim E_1+2E_2+2E$. 
Then $L$ is $\Q$-linearly equivalent to the torus invariant $\Q$-divisor 
\[
D:=(\beta+1)E_1+2\beta E_2+(2\beta+1)E.
\]
The $\Q$-divisor $D$ corresponds to the polytope $P\subset M_\R$ defined by the 
set of $m\in M_\R$ with 
\begin{eqnarray*}
\langle m, (1,0)\rangle &\geq & -\beta-1, \\
\langle m, (1,1)\rangle &\geq & -2\beta-1, \\
\langle m, (0,1)\rangle &\geq & -2\beta, \\
\langle m, (-1,0)\rangle &\geq & 0, \\
\langle m, (0,-1)\rangle &\geq & 0.
\end{eqnarray*}
As in \cite[Remark 2.9]{FLSZZ}, the barycenter of $P$ is equal to 
\[
\left(-\frac{4\beta^2+9\beta+6}{3(3\beta+4)}, \,\,
-\frac{7\beta^2+12\beta}{3(3\beta+4)}\right). 
\]
Therefore, by \cite[Corollary 7.16]{BJ}, we have 
\[
\inf_{F'}\frac{A_{X, 0}(F')}{S_L(F')}=\frac{A_{X, 0}(E)}{S_L(E)}
=\frac{3(3\beta+4)}{7\beta^2+12\beta+6},
\]
where $F'$ runs through all prime divisors over $X$. 

\noindent\underline{\textbf{Step 2}}\\
Assume that $F=C$. Then $A_{X, \Delta}(C)=\beta$. Since 
$L-xC\sim_\R(\beta-x)C+E_1+E$ is nef for any $x\in[0$, $\beta]$, we have
\begin{eqnarray*}
S(C)&=&\frac{1}{3\beta^2+4\beta}\int_0^\beta\left((L-x C)^{\cdot 2}\right)dx\\
&=&\frac{1}{3\beta^2+4\beta}\int_0^\beta\left(3(\beta-x)^2+4(\beta-x)\right)dx
=\frac{\beta(\beta+2)}{3\beta+4}.
\end{eqnarray*}
Thus we have 
\[
\frac{A_{X,\Delta}(C)}{S(C)}=\frac{3\beta+4}{\beta+2}>\frac{6}{5}.
\]

\noindent\underline{\textbf{Step 3}}\\
Assume that $c_X(F)\not\subset C$. Then $A_{X, \Delta}(F)=A_{X, 0}(F)$. Thus, by Step 1, 
we have
\[
\frac{A_{X, \Delta}(F)}{S(F)}\geq\frac{3(3\beta+4)}{7\beta^2+12\beta+6}>\frac{6}{5}. 
\]

\noindent\underline{\textbf{Step 4}}\\
By Steps 2, 3 and Proposition \ref{pltK_prop}, we may assume that $F$ is an exceptional, 
dreamy and plt-type prime divisor over $(X, \Delta)$ with $c_X(F)\in C$. 
We follow the notations in \S \ref{plt_section}. Moreover, let us set 
\[
j_C:=\max\left\{1\leq i\leq k\,|\,p_i\in C^{X_{i-1}}\right\}. 
\]
By Lemma \ref{mono_lem}, we have $j_C\in\{1$, $k\}$. From Proposition \ref{KpM_prop}, 
we have 
\[
A_{X, \Delta}(F)=\begin{cases}
b\beta+a & \text{if }j_C=1, \\
a\beta+b & \text{if }j_C=k.
\end{cases}\]

Assume that $j_C=1$ and $k\geq 3$. Then we have 
\[
\frac{A_{X, \Delta}(F)}{A_{X, 0}(F)}=\frac{\beta+(a/b)}{1+(a/b)}>\frac{\beta+2}{3}
\]
since $a/b>2$. By Step 1, we have 
\[
\frac{A_{X, \Delta}(F)}{S(F)}>\frac{(\beta+2)(3\beta+4)}{7\beta^2+12\beta+6}>\frac{6}{5}.
\]
Thus we may assume that either $j_C=k$ or $(j_C$, $k)=(1$, $2)$. 

\noindent\underline{\textbf{Step 5}}\\
Assume that $p_1\in C\setminus(E\cup E_1)$. In this case, we may assume that 
$p_1=E_1^\infty\cap E_2^\infty$. The curves $E_1^\infty$ and $C$ intersect 
transversally at $p_1$ since $p_1\not\in\{q_1$, $q_2\}$. 

We consider the case that $F$ is toric. In this case, $F$ corresponds to a primitive lattice 
point $v_F\in N$. Since $C$ intersects $E_1^\infty$ and $E_2^\infty$ transversally 
at $p_1$, we must have $j_C=1$. One of the following holds: 
\begin{enumerate}
\renewcommand{\theenumi}{\arabic{enumi}}
\renewcommand{\labelenumi}{(\theenumi)}
\item\label{step5-1}
$v_F=(-a$, $-b)$, 
\item\label{step5-2}
$v_F=(-b$, $-a)$.
\end{enumerate}
For the case \eqref{step5-1}, by \cite[Corollary 7.7]{BJ}, we have 
\[
S(F)=\frac{a(4\beta^2+9\beta+6)+b(7\beta^2+12\beta)}{3(3\beta+4)}. 
\]
Thus we have 
\[
\frac{A_{X, \Delta}(F)}{S(F)}=\frac{3(3\beta+4)(\beta+(a/b))}
{(a/b)(4\beta^2+9\beta+6)+7\beta^2+12\beta}
\geq\frac{3(3\beta+4)(\beta+1)}{11\beta^2+21\beta+6}>\frac{6}{5}
\]
since $a/b\in[1$, $2]$. 
For the case \eqref{step5-2}, by \cite[Corollary 7.7]{BJ}, we have 
\[
S(F)=\frac{a(7\beta^2+12\beta)+b(4\beta^2+9\beta+6)}{3(3\beta+4)}. 
\]
Thus we have 
\[
\frac{A_{X, \Delta}(F)}{S(F)}=\frac{3(3\beta+4)(\beta+(a/b))}
{(a/b)(7\beta^2+12\beta)+4\beta^2+9\beta+6}
\geq\frac{3(3\beta+4)(\beta+1)}{11\beta^2+21\beta+6}>\frac{6}{5}
\]
since $a/b\in[1$, $2]$. 

\noindent\underline{\textbf{Step 6}}\\
Assume that $p_1=E_1^\infty\cap E_2^\infty$ and $j_C=1$. By Step 5, we may assume that 
$k=2$ and $p_2\in F_1\setminus\left(C^{X_1}\cup(E_1^\infty)^{X_1}
\cup(E_2^\infty)^{X_1}\right)$. Set 
\[
D_1:=(\beta+1)E_1^\infty+\beta E_2^\infty+\beta E_2\sim_\Q L.
\]
Then we can inductively check that 
\[
\pi^*D_1=D_1^{\tilde{X}}+\sum_{i=1}^m b_i(2\beta+1)F_i^{\tilde{X}}. 
\]
By Proposition \ref{KpM_prop}, 
$\pi^*D_1-(x/(ab))F_m^*=\nu^*(\sigma^*D_1-x F)$ is effective for 
$x\in[0$, $b(2\beta+1)]$. We have 
\begin{eqnarray*}
\left(\pi^*L-\frac{x}{ab}F_m^*\right)\cdot\left(E_1^\infty\right)^{\tilde{X}}&=&
2\beta-\frac{x}{a}, \\
\left(\pi^*L-\frac{x}{ab}F_m^*\right)\cdot\left(E_2^\infty\right)^{\tilde{X}}&=&
\beta+1-\frac{x}{a}, \\
\left(\pi^*L-\frac{x}{ab}F_m^*\right)\cdot E_2^{\tilde{X}}&=&
1, \\
\left(\pi^*L-\frac{x}{ab}F_m^*\right)\cdot F^{\tilde{X}}&=&
\frac{x}{ab}.
\end{eqnarray*}
Note that $a(\beta+1)>2a\beta$ and $b(2\beta+1)>2a\beta$. Thus 
$\sigma^*L-x F$ is nef for $x\in[0$, $2a\beta]$, and $\sigma^*L-2a\beta F$ induces 
the birational contraction $\phi\colon Y\to Y'$ of $(E_1^\infty)^Y$. Set 
$\nu':=\phi\circ\nu\colon\tilde{X}\to Y'$. 
By Lemma \ref{mono_lem}, we have $\left((E_1^\infty)^{Y\cdot 2}\right)=-b/a$. 
Thus we have 
\begin{equation}\label{6-1_eqn}
\pi^*D_1-\frac{x}{ab}F_m^*={\nu'}^*\nu'_*\left(\pi^*D_1-\frac{x}{ab}F_m^*\right)
+\frac{x-2a\beta}{b}\nu^*\left((E_1^\infty)^Y\right).
\end{equation}
For any $x\in[2a\beta$, $b(2\beta+1)]$, we have 
$\nu'_*\left(\pi^*D_1-(x/(ab))F_m^*\right)\geq 0$ and 
\begin{eqnarray*}
\left(\pi^*L-\frac{x}{ab}F_m^*-\frac{x-2a\beta}{b}\nu^*\left((E_1^\infty)^Y\right)
\right)\cdot\left(E_2^\infty\right)^{\tilde{X}}&=&
\frac{(2a-b)\beta+b-x}{b}, \\
\left(\pi^*L-\frac{x}{ab}F_m^*-\frac{x-2a\beta}{b}\nu^*\left((E_1^\infty)^Y\right)
\right)\cdot E_2^{\tilde{X}}&=&
\frac{2a\beta+b-x}{b}, \\
\left(\pi^*L-\frac{x}{ab}F_m^*-\frac{x-2a\beta}{b}\nu^*\left((E_1^\infty)^Y\right)
\right)\cdot F^{\tilde{X}}&=&
\frac{2\beta}{b}.
\end{eqnarray*}
Note that $b\beta+b<\min\{b(2\beta+1)$, $(2a-b)\beta+b$, $2a\beta+b\}$. Thus, 
for any $x\in[2a\beta$, $b\beta+b]$, \eqref{6-1_eqn} gives the Zariski decomposition, 
hence
\begin{eqnarray*}
\vol_Y(\sigma^*L-x F)&=&\left(\pi^*L-\frac{x}{ab}F_m^*-
\frac{x-2a\beta}{b}\nu^*\left((E_1^\infty)^Y\right)\right)^{\cdot 2}\\
&=&3\beta^2+4\beta-\frac{x^2}{ab}+\frac{(x-2a\beta)^2}{ab}.
\end{eqnarray*}
By Proposition \ref{pltK_prop}, for $x\geq b\beta+b$, 
\[
\vol_Y(\sigma^*L-x F)\leq \frac{4}{b(4a-b)}\left(x-\left(b \beta+b
+\frac{(4a-b)\beta}{2}\right)\right)^2.
\]
In particular, we have 
\begin{eqnarray*}
S(F)&\leq&\frac{1}{3\beta^2+4\beta}\biggl(\int_0^{b\beta+b}\left(3\beta^2
+4\beta-\frac{x^2}{ab}\right)dx+\int_{2a\beta}^{b\beta+b}\frac{(x-2a\beta)^2}{ab}dx\\
&&+\int_{b\beta+b}^{b\beta+b+\frac{(4a-b)\beta}{2}}\frac{4}{b(4a-b)}
\left(x-\left(b\beta+b+\frac{(4a-b)\beta}{2}\right)\right)^2dx\biggr)\\
&=&\frac{a(16\beta^2+24\beta)+b(7\beta^2+18\beta+12)}{6(3\beta+4)}. 
\end{eqnarray*}
Thus we have 
\begin{eqnarray*}
\frac{A_{X, \Delta}(F)}{S(F)}&\geq&
\frac{6(3\beta+4)(\beta+(a/b))}{(a/b)(16\beta^2+24\beta)
+7\beta^2+18\beta+12}\\
&>&\frac{6(3\beta+4)(\beta+1)}{23\beta^2+42\beta+12}>\frac{6}{5}
\end{eqnarray*}
since $a/b\in(1$, $2]$. 

\noindent\underline{\textbf{Step 7}}\\
Assume that $p_1=E_1^\infty\cap E_2^\infty$ and $j_C=k\geq 2$. Set 
\[
D_2:=\beta C + E_1^\infty\sim_\Q L.
\]
Then we can inductively check that 
\[
\pi^*D_2=D_2^{\tilde{X}}+\sum_{i=1}^m (a_i\beta+b_i)F_i^{\tilde{X}}. 
\]
By Proposition \ref{KpM_prop}, 
$\pi^*D_2-(x/(ab))F_m^*=\nu^*(\sigma^*D_2-x F)$ is effective for 
$x\in[0$, $a\beta+b]$. We have 
\begin{eqnarray*}
\left(\pi^*L-\frac{x}{ab}F_m^*\right)\cdot C^{\tilde{X}}&=&
3\beta+2-\frac{x}{b}, \\
\left(\pi^*L-\frac{x}{ab}F_m^*\right)\cdot\left(E_1^\infty\right)^{\tilde{X}}&=&
2\beta-\frac{x}{a}, \\
\left(\pi^*L-\frac{x}{ab}F_m^*\right)\cdot F^{\tilde{X}}&=&
\frac{x}{ab}.
\end{eqnarray*}

\underline{The case $a/b\geq (3\beta+1)/\beta$}\,\,
We have $3b\beta+2b\leq\min\{a\beta+b$, $2a\beta\}$ in this case. 
Thus 
$\sigma^*L-x F$ is nef for $x\in[0$, $3b\beta+2b]$. By Proposition \ref{pltK_prop}, 
\[
S(F)\leq\frac{a(3\beta^2+4\beta)+b(3\beta+2)^2}{3(3\beta+2)}. 
\]
Thus we have 
\begin{eqnarray*}
\frac{A_{X, \Delta}(F)}{S(F)}&\geq&
\frac{3(3\beta+2)((a/b)\beta+1)}{(a/b)(3\beta^2+4\beta)
+(3\beta+2)^2}\\
&\geq&\frac{3(3\beta+2)^2}{18\beta^2+27\beta+8}>\frac{6}{5}
\end{eqnarray*}
since $a/b\in[(3\beta+1)/\beta$, $\infty)$. 

\underline{The case $1/\beta\leq a/b< (3\beta+1)/\beta$}\,\,
We have 
$a\beta+b\leq\min\{3b\beta+2b$, $2a\beta\}$ in this case. 
Thus 
$\sigma^*L-x F$ is nef for $x\in[0$, $a\beta+b]$. By Proposition \ref{pltK_prop}, 
\[
S(F)\leq\frac{ab(3\beta^2+4\beta)+(a\beta+b)^2}{3(a\beta+b)}. 
\]
Thus we have 
\begin{eqnarray*}
\frac{A_{X, \Delta}(F)}{S(F)}\geq
\frac{3((a/b)\beta+1)^2}{((a/b)\beta+1)^2+(a/b)(3\beta^2+4\beta)}
>\frac{12}{3\beta+8}>\frac{6}{5}
\end{eqnarray*}
since $a/b\in[1/\beta$, $(3\beta+1)/\beta)$. 

\underline{The case $a/b<1/\beta$}\,\,
We have $2a\beta<\min\{a\beta+b$, $3b\beta+2b\}$ in this case. 
Thus 
$\sigma^*L-x F$ is nef for $x\in[0$, $2a\beta]$, 
and $\sigma^*L-2a\beta F$ induces 
the birational contraction $\phi\colon Y\to Y'$ of $(E_1^\infty)^Y$. Set 
$\nu':=\phi\circ\nu\colon\tilde{X}\to Y'$. 
By Lemma \ref{mono_lem}, we have $\left((E_1^\infty)^{Y\cdot 2}\right)=-b/a$. 
Thus we have 
\begin{equation}\label{7-1_eqn}
\pi^*D_2-\frac{x}{ab}F_m^*={\nu'}^*\nu'_*\left(\pi^*D_2-\frac{x}{ab}F_m^*\right)
+\frac{x-2a\beta}{b}\nu^*\left((E_1^\infty)^Y\right).
\end{equation}
For any $x\in[2a\beta$, $a\beta+b]$, we have 
$\nu'_*\left(\pi^*D_2-(x/(ab))F_m^*\right)\geq 0$ and 
\begin{eqnarray*}
\left(\pi^*L-\frac{x}{ab}F_m^*-\frac{x-2a\beta}{b}\nu^*\left((E_1^\infty)^Y\right)
\right)\cdot C^{\tilde{X}}&=&
\frac{2(a\beta+(3/2)b\beta+b-x)}{b}, \\
\left(\pi^*L-\frac{x}{ab}F_m^*-\frac{x-2a\beta}{b}\nu^*\left((E_1^\infty)^Y\right)
\right)\cdot F^{\tilde{X}}&=&
\frac{2\beta}{b}.
\end{eqnarray*}
Thus, 
for any $x\in[2a\beta$, $a\beta+b]$, \eqref{7-1_eqn} gives the Zariski decomposition, 
hence
\begin{eqnarray*}
\vol_Y(\sigma^*L-x F)&=&\left(\pi^*L-\frac{x}{ab}F_m^*-
\frac{x-2a\beta}{b}\nu^*\left((E_1^\infty)^Y\right)\right)^{\cdot 2}\\
&=&3\beta^2+4\beta-\frac{x^2}{ab}+\frac{(x-2a\beta)^2}{ab}.
\end{eqnarray*}
By Proposition \ref{pltK_prop}, for $x\geq a\beta+b$, 
\[
\vol_Y(\sigma^*L-x F)\leq \frac{4}{3b^2}\left(x-\left(a \beta+b
+\frac{3b\beta}{2}\right)\right)^2.
\]
In particular, we have 
\begin{eqnarray*}
S(F)&\leq&\frac{1}{3\beta^2+4\beta}\biggl(\int_0^{a\beta+b}\left(3\beta^2
+4\beta-\frac{x^2}{ab}\right)dx+\int_{2a\beta}^{a\beta+b}\frac{(x-2a\beta)^2}{ab}dx\\
&&+\int_{a\beta+b}^{a\beta+b+\frac{3b\beta}{2}}\frac{4}{3b^2}
\left(x-\left(a\beta+b+\frac{3b\beta}{2}\right)\right)^2dx\biggr)\\
&=&\frac{-4a^2\beta^2+ab(18\beta^2+24\beta)+b^2(9\beta^2+18\beta+12)}{6b(3\beta+4)}. 
\end{eqnarray*}
Thus we have 
\begin{eqnarray*}
\frac{A_{X, \Delta}(F)}{S(F)}&\geq&
\frac{6(3\beta+4)((a/b)\beta+1)}{-4(a/b)^2\beta^2+(a/b)(18\beta^2+24\beta)
+9\beta^2+18\beta+12}\\
&>&\frac{6(3\beta+4)((a/b)\beta+1)}{-3(a/b)^2\beta^2+(a/b)(18\beta^2+24\beta)
+9\beta^2+18\beta+12}\\
&>&\frac{4(3\beta+4)}{3\beta^2+12\beta+11}>\frac{6}{5}
\end{eqnarray*}
since $a/b\in(1$, $1/\beta)$. 

\noindent\underline{\textbf{Step 8}}\\
Assume that $p_1\in C\cap(E\cup E_1)$. Since $p_1\not\in\{q_1$, $q_2\}$, we have 
either $p_1\not\in E$ or $p_1\not\in E_1$. We consider the case $p_1\not\in E$. 
We may assume that $p_1=E_1\cap E_2^\infty$. 
We consider the case that $F$ is a toric. In this case, $F$ corresponds to a 
primitive lattice point $v_F\in N$. Since $C$ intersects $E_1$ and $E_2^\infty$ 
transversally, we must have $j_C=1$. Thus we can assume that $k\leq 2$ by Step 4. 
One of the following holds: 
\begin{enumerate}
\renewcommand{\theenumi}{\arabic{enumi}}
\renewcommand{\labelenumi}{(\theenumi)}
\item\label{step8-1}
$v_F=(a$, $-b)$, 
\item\label{step8-2}
$v_F=(b$, $-a)$.
\end{enumerate}
For the case \eqref{step8-1}, by \cite[Corollary 7.7]{BJ}, we have 
\[
S(F)=\frac{-a(4\beta^2+9\beta+6)+b(7\beta^2+12\beta)}{3(3\beta+4)}+a(\beta+1). 
\]
Thus we have 
\[
\frac{A_{X, \Delta}(F)}{S(F)}=\frac{3(3\beta+4)(\beta+(a/b))}
{(a/b)(5\beta^2+12\beta+6)+7\beta^2+12\beta}
\geq\frac{(3\beta+4)(\beta+1)}{2(2\beta^2+4\beta+1)}>\frac{6}{5}
\]
since $a/b\in[1$, $2]$. 
For the case \eqref{step8-2}, by \cite[Corollary 7.7]{BJ}, we have 
\[
S(F)=\frac{a(7\beta^2+12\beta)-b(4\beta^2+9\beta+6)}{3(3\beta+4)}+b(\beta+1). 
\]
Thus we have 
\[
\frac{A_{X, \Delta}(F)}{S(F)}=\frac{3(3\beta+4)(\beta+(a/b))}
{(a/b)(7\beta^2+12\beta)+5\beta^2+12\beta+6}
\geq\frac{(3\beta+4)(\beta+1)}{2(2\beta^2+4\beta+1)}>\frac{6}{5}
\]
since $a/b\in[1$, $2]$. 

\noindent\underline{\textbf{Step 9}}\\
Assume that $p_1=E_1\cap E_2^\infty$ and $j_C=1$. By Step 8, we may assume that 
$k=2$ and $p_2\in F_1\setminus\left(C^{X_1}\cup E_1^{X_1}\cup(E_2^\infty)^{X_1}\right)$. 
Set 
\[
D_3:=(\beta+1)E_1+E+2\beta E_2^\infty\sim_\Q L.
\]
Then we can inductively check that 
\[
\pi^*D_3=D_3^{\tilde{X}}+\sum_{i=1}^m b_i(3\beta+1)F_i^{\tilde{X}}. 
\]
By Proposition \ref{KpM_prop}, 
$\pi^*D_3-(x/(ab))F_m^*=\nu^*(\sigma^*D_3-x F)$ is effective for 
$x\in[0$, $b(3\beta+1)]$. We have 
\begin{eqnarray*}
\left(\pi^*L-\frac{x}{ab}F_m^*\right)\cdot E_1^{\tilde{X}}&=&
\beta-\frac{x}{a}, \\
\left(\pi^*L-\frac{x}{ab}F_m^*\right)\cdot E^{\tilde{X}}&=&
\beta, \\
\left(\pi^*L-\frac{x}{ab}F_m^*\right)\cdot \left(E_2^\infty\right)^{\tilde{X}}&=&
\beta+1-\frac{x}{a}, \\
\left(\pi^*L-\frac{x}{ab}F_m^*\right)\cdot F^{\tilde{X}}&=&
\frac{x}{ab}.
\end{eqnarray*}
Note that $a\beta<b(3\beta+1)$. Thus 
$\sigma^*L-x F$ is nef for $x\in[0$, $a\beta]$, and $\sigma^*L-a\beta F$ induces 
the birational contraction $\phi\colon Y\to Y'$ of $E_1^Y$. Set 
$\nu':=\phi\circ\nu\colon\tilde{X}\to Y'$. 
By Lemma \ref{mono_lem}, we have $\left(E_1^{Y\cdot 2}\right)=-(a+b)/a$. 
Thus we have 
\begin{equation}\label{9-1_eqn}
\pi^*D_3-\frac{x}{ab}F_m^*={\nu'}^*\nu'_*\left(\pi^*D_3-\frac{x}{ab}F_m^*\right)
+\frac{x-a\beta}{a+b}\nu^*E_1^Y.
\end{equation}
For any $x\in[a\beta$, $b(3\beta+1)]$, we have 
$\nu'_*\left(\pi^*D_3-(x/(ab))F_m^*\right)\geq 0$ and 
\begin{eqnarray*}
\left(\pi^*L-\frac{x}{ab}F_m^*-\frac{x-a\beta}{a+b}\nu^*E_1^Y
\right)\cdot E^{\tilde{X}}&=&
\frac{2a\beta+b\beta-x}{a+b}, \\
\left(\pi^*L-\frac{x}{ab}F_m^*-\frac{x-a\beta}{a+b}\nu^*E_1^Y
\right)\cdot (E_2^\infty)^{\tilde{X}}&=&
\frac{2(a\beta+\frac{a+b}{2}-x)}{a+b}, \\
\left(\pi^*L-\frac{x}{ab}F_m^*-\frac{x-a\beta}{a+b}\nu^*E_1^Y
\right)\cdot F^{\tilde{X}}&=&
\frac{b\beta+x}{b(a+b)}.
\end{eqnarray*}
Note that $2a\beta+b\beta<\min\{a\beta+(a+b)/2$, $b(3\beta+1)\}$. Thus, 
for any $x\in[a\beta$, $2a\beta+b\beta]$, \eqref{9-1_eqn} gives the Zariski decomposition, 
hence
\begin{eqnarray*}
\vol_Y(\sigma^*L-x F)&=&\left(\pi^*L-\frac{x}{ab}F_m^*-
\frac{x-a\beta}{a+b}\nu^*E_1^Y\right)^{\cdot 2}\\
&=&3\beta^2+4\beta-\frac{x^2}{ab}+\frac{(x-a\beta)^2}{a(a+b)}.
\end{eqnarray*}
Moreover, $\phi_*\left(\sigma^*L-(2a\beta+b\beta)F\right)$ induces the birational 
contraction $\phi'\colon Y'\to Y''$ of $E^{Y'}$. Set 
$\nu'':=\phi'\circ\nu'\colon\tilde{X}\to Y''$. 
By Lemma \ref{mono_lem}, we have 
$(E_1^{Y\cdot 2})=-(a+b)/a$, $(E^Y\cdot E_1^Y)=1$ and $(E^{Y\cdot 2})=-1$. 
Thus we have 
\begin{equation}\label{9-2_eqn}
\pi^*D_3-\frac{x}{ab}F_m^*={\nu''}^*\nu''_*\left(\pi^*D_3-\frac{x}{ab}F_m^*\right)
+\nu^*\left(\frac{x-2a\beta}{b}E_1^Y+\frac{x-(2a\beta+b\beta)}{b}E^Y\right).
\end{equation}

For any $x\in[2a\beta+b\beta$, $b(3\beta+1)]$, we have 
$\nu''_*\left(\pi^*D_3-(x/(ab))F_m^*\right)\geq 0$ and 
\begin{eqnarray*}
&&\left(\pi^*L-\frac{x}{ab}F_m^*-\nu^*\left(\frac{x-2a\beta}{b}E_1^Y
+\frac{x-(2a\beta+b\beta)}{b}E^Y\right)\right)
\cdot (E_2^\infty)^{\tilde{X}}
=
\frac{2a\beta-b\beta+b-x}{b}, \\
&&\left(\pi^*L-\frac{x}{ab}F_m^*-\nu^*\left(\frac{x-2a\beta}{b}E_1^Y
+\frac{x-(2a\beta+b\beta)}{b}E^Y\right)\right)
\cdot F^{\tilde{X}}
=
\frac{2\beta}{b}.
\end{eqnarray*}
Note that $2a\beta-b\beta+b\leq b(3\beta+1)$. 
Thus, 
for any $x\in[2a\beta+b\beta$, $2a\beta-b\beta+b]$, 
\eqref{9-2_eqn} gives the Zariski decomposition, hence
\begin{eqnarray*}
&&\vol_Y(\sigma^*L-x F)\\
&=&
\left(\pi^*L-\frac{x}{ab}F_m^*-\nu^*\left(\frac{x-2a\beta}{b}E_1^Y
+\frac{x-(2a\beta+b\beta)}{b}E^Y\right)\right)^{\cdot 2}\\
&=&3\beta^2+4\beta-\frac{x^2}{ab}+\frac{(x-a\beta)^2}{a(a+b)}
+\frac{(x-(2a\beta+b\beta))^2}{b(a+b)}.
\end{eqnarray*}
By Proposition \ref{pltK_prop}, for $x\geq 2a\beta-b\beta+b$, 
\[
\vol_Y(\sigma^*L-x F)\leq \frac{1}{b(2b-a)}\left(x-\left(2a \beta-b\beta+b
+2(2b-a)\beta\right)\right)^2.
\]
In particular, we have 
\begin{eqnarray*}
S(F)&\leq&\frac{1}{3\beta^2+4\beta}\biggl(\int_0^{2a\beta-b\beta+b}\left(3\beta^2
+4\beta-\frac{x^2}{ab}\right)dx\\
&&+\int_{a\beta}^{2a\beta-b\beta+b}
\frac{(x-a\beta)^2}{a(a+b)}dx+\int_{2a\beta+b\beta}^{2a\beta-b\beta+b}
\frac{(x-(2a\beta+b\beta))^2}{b(a+b)}dx\\
&&+\int_{2a\beta-b\beta+b}^{2a\beta-b\beta+b+2(2b-a)\beta}
\frac{(x-(2a\beta-b\beta+b+2(2b-a)\beta))^2}{b(2b-a)}dx\biggr)\\
&=&\frac{a(-\beta^2+12\beta)+b(13\beta^2+12\beta+6)}{3(3\beta+4)}. 
\end{eqnarray*}
Thus we have 
\begin{eqnarray*}
\frac{A_{X, \Delta}(F)}{S(F)}&\geq&
\frac{3(3\beta+4)(\beta+(a/b))}{(a/b)(-\beta^2+12\beta)+13\beta^2+12\beta+6}\\
&>&\frac{(3\beta+4)(\beta+1)}{2(2\beta^2+4\beta+1)}>\frac{6}{5}
\end{eqnarray*}
since $a/b\in(1$, $2]$. 

\noindent\underline{\textbf{Step 10}}\\
Assume that $p_1=E_1\cap E_2^\infty$ and $j_C=k\geq 2$. 
For $D_3=(\beta+1)E_1+E+2\beta E_2^\infty$, 
we can inductively check that 
\[
\pi^*D_3=D_3^{\tilde{X}}+\sum_{i=1}^m b_i(3\beta+1)F_i^{\tilde{X}}. 
\]
By Proposition \ref{KpM_prop}, 
$\pi^*D_3-(x/(ab))F_m^*=\nu^*(\sigma^*D_3-x F)$ is effective for 
$x\in[0$, $b(3\beta+1)]$. 
Set 
\[
D_4:=\beta C+E_1+E\sim_\Q L. 
\]
Then we can inductively check that 
\[
\pi^*D_4=D_4^{\tilde{X}}+\sum_{i=1}^m (a_i\beta+b_i)F_i^{\tilde{X}}. 
\]
By Proposition \ref{KpM_prop}, 
$\pi^*D_4-(x/(ab))F_m^*=\nu^*(\sigma^*D_4-x F)$ is effective for 
$x\in[0$, $a\beta+b]$. 
We have 
\begin{eqnarray*}
\left(\pi^*L-\frac{x}{ab}F_m^*\right)\cdot C^{\tilde{X}}&=&
3\beta+2-\frac{x}{b}, \\
\left(\pi^*L-\frac{x}{ab}F_m^*\right)\cdot E_1^{\tilde{X}}&=&
\beta-\frac{x}{a}, \\
\left(\pi^*L-\frac{x}{ab}F_m^*\right)\cdot E^{\tilde{X}}&=&
\beta, \\
\left(\pi^*L-\frac{x}{ab}F_m^*\right)\cdot \left(E_2^\infty\right)^{\tilde{X}}&=&
\beta+1-\frac{x}{a}, \\
\left(\pi^*L-\frac{x}{ab}F_m^*\right)\cdot F^{\tilde{X}}&=&
\frac{x}{ab}.
\end{eqnarray*}

\underline{The case $a/b\geq (\beta+2)/\beta$}\,\,
We have 
$b\beta+2b\leq\min\{a\beta$, $3b\beta+2b$, $a\beta+b\}$ in this case. 
Thus 
$\sigma^*L-x F$ is nef for $x\in[0$, $b\beta+2b]$. By Proposition \ref{pltK_prop}, 
\[
S(F)\leq\frac{a(3\beta^2+4\beta)+b(\beta+2)^2}{3(\beta+2)}. 
\]
Thus we have 
\begin{eqnarray*}
\frac{A_{X, \Delta}(F)}{S(F)}&\geq&
\frac{3(\beta+2)((a/b)\beta+1)}{(a/b)(3\beta^2+4\beta)+(\beta+2)^2}\\
&\geq&\frac{3(\beta+3)}{2(2\beta+3)}>\frac{6}{5}
\end{eqnarray*}
since $a/b\in[(\beta+2)/\beta$, $\infty)$. 

\underline{The case $(3\beta+1)/\beta\leq a/b< (\beta+2)/\beta$}\,\,
We have 
$b(3\beta+1)\leq a\beta<a\beta+b$ and $a\beta<b(3\beta+2)$ in this case. 
Set 
\[
s:=\frac{b}{(a-3b)\beta}\,\in\,(0,\,1]\cap\Q
\]
and 
\[
D':=s D_3+(1-s)D_4\sim_\Q L. 
\]
Then we have 
\[
\pi^*D'={D'}^{\tilde{X}}+\sum_{i=1}^m \left(s b_i(3\beta+1)+
(1-s)(a_i\beta+b_i)\right)F_i^{\tilde{X}}.
\]
Thus $\pi^*D'-(x/(ab))F_m^*=\nu^*(\sigma^*D'-x F)$ is effective and nef for 
$x\in[0$, $a\beta]$. By Proposition \ref{pltK_prop}, 
\[
S(F)\leq\frac{a\beta+b(3\beta+4)}{3}. 
\]
Thus we have 
\begin{eqnarray*}
\frac{A_{X, \Delta}(F)}{S(F)}\geq
\frac{3((a/b)\beta+1)}{(a/b)\beta+3\beta+4}
>\frac{3(3\beta+2)}{6\beta+5}>\frac{6}{5}
\end{eqnarray*}
since $a/b\in[(3\beta+1)/\beta$, $(\beta+2)/\beta)$. 

\underline{The case $a/b< (3\beta+1)/\beta$}\,\,
We have $a\beta<b(3\beta+1)$ in this case. 
Thus 
$\sigma^*L-x F$ is nef for $x\in[0$, $a\beta]$, and $\sigma^*L-a\beta F$ induces 
the birational contraction $\phi\colon Y\to Y'$ of $E_1^Y$. Set 
$\nu':=\phi\circ\nu\colon\tilde{X}\to Y'$. 
By Lemma \ref{mono_lem}, we have $\left(E_1^{Y\cdot 2}\right)=-(a+b)/a$. 
Thus we have 
\begin{equation}\label{10-1_eqn}
\pi^*D_3-\frac{x}{ab}F_m^*={\nu'}^*\nu'_*\left(\pi^*D_3-\frac{x}{ab}F_m^*\right)
+\frac{x-a\beta}{a+b}\nu^*E_1^Y.
\end{equation}
For any $x\in[a\beta$, $b(3\beta+1)]$, we have 
$\nu'_*\left(\pi^*D_3-(x/(ab))F_m^*\right)\geq 0$ and 
\begin{eqnarray*}
\left(\pi^*L-\frac{x}{ab}F_m^*-\frac{x-a\beta}{a+b}\nu^*E_1^Y
\right)\cdot E^{\tilde{X}}&=&
\frac{2a\beta+b\beta-x}{a+b}, \\
\left(\pi^*L-\frac{x}{ab}F_m^*-\frac{x-a\beta}{a+b}\nu^*E_1^Y
\right)\cdot \left(E_2^\infty\right)^{\tilde{X}}&=&
\frac{2\left(a\beta+\frac{a+b}{2}-x\right)}{a+b}, \\
\left(\pi^*L-\frac{x}{ab}F_m^*-\frac{x-a\beta}{a+b}\nu^*E_1^Y
\right)\cdot F^{\tilde{X}}&=&
\frac{b\beta+x}{b(a+b)}.
\end{eqnarray*}
Note that $2a\beta+b\beta< a\beta+(a+b)/2$. 
Moreover, the condition $2a\beta+b\beta<b(3\beta+1)$ is equivalent to 
the condition $a/b<(2\beta+1)/(2\beta)$. 
For any $x\in[a\beta$, $\min\{2a\beta+b\beta$, $b(3\beta+1)\}]$, 
\eqref{10-1_eqn} gives the Zariski decomposition, 
hence
\begin{eqnarray*}
\vol_Y(\sigma^*L-x F)&=&\left(\pi^*L-\frac{x}{ab}F_m^*-
\frac{x-a\beta}{a+b}\nu^*E_1^Y\right)^{\cdot 2}\\
&=&3\beta^2+4\beta-\frac{x^2}{ab}+\frac{(x-a\beta)^2}{a(a+b)}.
\end{eqnarray*}

Assume firstly that $a/b\geq (2\beta+1)/(2\beta)$. By Proposition \ref{pltK_prop}, 
for $x\geq b(3\beta+1)$, 
\begin{eqnarray*}
&&\vol_Y(\sigma^*L-x F)\\ 
&\leq&\frac{(4\beta+1)^2\left(x-\left(b(3\beta+1)+
\frac{a(4\beta^2+4\beta)+b(-12\beta^2-4\beta-1)}{4\beta+1}\right)\right)^2
}{(a+b)(a(4\beta^2+4\beta)+b(-12\beta^2-4\beta-1))}.
\end{eqnarray*}
In particular, we have
\begin{eqnarray*}
S(F)&\leq &\frac{1}{3\beta^2+4\beta}\biggl(b(3\beta+1)(3\beta^2+4\beta)
-\frac{b^3(3\beta+1)^3}{3ab}\\
&&+\frac{(b(3\beta+1)-a\beta)^3}{3a(a+b)}
+\frac{(a(4\beta^2+4\beta)+b(-12\beta^2-4\beta-1))^2}{3(a+b)(4\beta+1)}\biggr)\\
&=&\frac{a(12\beta^3+31\beta^2+16\beta)+b(36\beta^3+69\beta^2+40\beta+4)}{
3(4\beta+1)(3\beta+4)}.
\end{eqnarray*}
Thus we have 
\begin{eqnarray*}
\frac{A_{X, \Delta}(F)}{S(F)}&\geq&
\frac{3(4\beta+1)(3\beta+4)((a/b)\beta+1)}{
(a/b)(12\beta^3+31\beta^2+16\beta)+36\beta^3+69\beta^2+40\beta+4}\\
&>&\frac{3(3\beta+2)}{6\beta+5}>\frac{6}{5}
\end{eqnarray*}
since $a/b\in[(2\beta+1)/(2\beta)$, $(3\beta+1)/\beta)$. 

Thus we may assume that $a/b<(2\beta+1)/(2\beta)$. In this case, 
$\phi_*\left(\sigma^*L-(2a\beta+b\beta)F\right)$ induces the birational 
contraction $\phi'\colon Y'\to Y''$ of $E^{Y'}$. Set 
$\nu'':=\phi'\circ\nu'\colon\tilde{X}\to Y''$. 
By Lemma \ref{mono_lem}, we have 
$\left(E_1^{Y\cdot 2}\right)=-(a+b)/a$, $\left(E^Y\cdot E_1^Y\right)=1$ 
and $\left(E^{Y\cdot 2}\right)=-1$. 
Thus we have 
\begin{equation}\label{10-2_eqn}
\pi^*D_3-\frac{x}{ab}F_m^*={\nu''}^*\nu''_*\left(\pi^*D_3-\frac{x}{ab}F_m^*\right)
+\nu^*\left(\frac{x-2a\beta}{b}E_1^Y+\frac{x-(2a\beta+b\beta)}{b}E^Y\right).
\end{equation}
For any $x\in[2a\beta+b\beta$, $b(3\beta+1)]$, we have 
$\nu''_*\left(\pi^*D_3-(x/(ab))F_m^*\right)\geq 0$ and 
\begin{eqnarray*}
\left(\pi^*L-\frac{x}{ab}F_m^*-\nu^*\left(\frac{x-2a\beta}{b}E_1^Y
+\frac{x-(2a\beta+b\beta)}{b}E^Y\right)\right)
\cdot (E_2^\infty)^{\tilde{X}}
&=&
\frac{2a\beta-b\beta+b-x}{b}, \\
\left(\pi^*L-\frac{x}{ab}F_m^*-\nu^*\left(\frac{x-2a\beta}{b}E_1^Y
+\frac{x-(2a\beta+b\beta)}{b}E^Y\right)\right)
\cdot F^{\tilde{X}}
&=&
\frac{2\beta}{b}.
\end{eqnarray*}
Thus, 
for any $x\in[2a\beta+b\beta$, $\min\{2a\beta-b\beta+b$, $b(3\beta+1)\}]$, 
\eqref{10-2_eqn} gives the Zariski decomposition, hence
\begin{eqnarray*}
&&\vol_Y(\sigma^*L-x F)\\
&=&
\left(\pi^*L-\frac{x}{ab}F_m^*-\nu^*\left(\frac{x-2a\beta}{b}E_1^Y
+\frac{x-(2a\beta+b\beta)}{b}E^Y\right)\right)^{\cdot 2}\\
&=&3\beta^2+4\beta-\frac{x^2}{ab}+\frac{(x-a\beta)^2}{a(a+b)}
+\frac{(x-(2a\beta+b\beta))^2}{b(a+b)}.
\end{eqnarray*}

Assume that $a/b\geq 2$. We have $b(3\beta+1)\leq 2a\beta-b\beta+b$ in this case. 
By Proposition \ref{pltK_prop}, for $x\geq b(3\beta+1)$, 
\[
\vol_Y(\sigma^*L-x F)\leq \frac{1}{b(a-2b)}\left(x-\left(b(3\beta+1)
+2(a-2b)\beta\right)\right)^2.
\]
In particular, we have 
\begin{eqnarray*}
S(F)&\leq&\frac{1}{3\beta^2+4\beta}\biggl(\int_0^{b(3\beta+1)}\left(3\beta^2
+4\beta-\frac{x^2}{ab}\right)dx\\
&&+\int_{a\beta}^{b(3\beta+1)}
\frac{(x-a\beta)^2}{a(a+b)}dx+\int_{2a\beta+b\beta}^{b(3\beta+1)}
\frac{(x-(2a\beta+b\beta))^2}{b(a+b)}dx\\
&&+\int_{b(3\beta+1)}^{b(3\beta+1)+2(a-2b)\beta}
\frac{(x-(b(3\beta+1)+2(a-2b)\beta))^2}{b(a-2b)}dx\biggr)\\
&=&\frac{a(-\beta^2+12\beta)+b(13\beta^2+12\beta+6)}{3(3\beta+4)}. 
\end{eqnarray*}
Thus we have 
\begin{eqnarray*}
\frac{A_{X, \Delta}(F)}{S(F)}&\geq&
\frac{3(3\beta+4)((a/b)\beta+1)}{(a/b)(-\beta^2+12\beta)+13\beta^2+12\beta+6}\\
&>&\frac{3(3\beta+4)(2\beta+3)}{24\beta^2+47\beta+24}>\frac{6}{5}
\end{eqnarray*}
since $a/b\in[2$, $(2\beta+1)/(2\beta))$. 

Therefore we may assume that $a/b<2$. We have $2a\beta-b\beta+b<b(3\beta+1)$ 
in this case. 
By Proposition \ref{pltK_prop}, for $x\geq 2a\beta-b\beta+b$, 
\[
\vol_Y(\sigma^*L-x F)\leq \frac{1}{b(2b-a)}\left(x-\left(2a\beta-b\beta+b
+2(2b-a)\beta\right)\right)^2.
\]
In particular, we have 
\begin{eqnarray*}
S(F)&\leq&\frac{1}{3\beta^2+4\beta}\biggl(\int_0^{2a\beta-b\beta+b}\left(3\beta^2
+4\beta-\frac{x^2}{ab}\right)dx\\
&&+\int_{a\beta}^{2a\beta-b\beta+b}
\frac{(x-a\beta)^2}{a(a+b)}dx+\int_{2a\beta+b\beta}^{2a\beta-b\beta+b}
\frac{(x-(2a\beta+b\beta))^2}{b(a+b)}dx\\
&&+\int_{2a\beta-b\beta+b}^{2a\beta-b\beta+b+2(2b-a)\beta}
\frac{(x-(2a\beta-b\beta+b+2(2b-a)\beta))^2}{b(2b-a)}dx\biggr)\\
&=&\frac{a(-\beta^2+12\beta)+b(13\beta^2+12\beta+6)}{3(3\beta+4)}. 
\end{eqnarray*}
Thus we have 
\begin{eqnarray*}
\frac{A_{X, \Delta}(F)}{S(F)}&\geq&
\frac{3(3\beta+4)((a/b)\beta+1)}{(a/b)(-\beta^2+12\beta)+13\beta^2+12\beta+6}\\
&>&\frac{3(3\beta+4)(2\beta+1)}{11\beta^2+36\beta+6}>\frac{6}{5}
\end{eqnarray*}
since $a/b\in(1$, $2)$. 

\noindent\underline{\textbf{Step 11}}\\
Assume that $p_1\in C\cap E$. As we have seen in Step 8, we have $p_1\not\in E_1$. 
Of course, $p_1\not\in E_2$ since $C\cap E_2=\emptyset$. 
Assume that $j_C=1$. Moreover, we assume that $p_2\not\in E^{X_1}$ if $m\geq 2$. 
By Step 5, we can assume that $k\leq 2$. For 
$D=(\beta+1)E_1+2\beta E_2+(2\beta+1)E$, we can 
inductively check that 
\[
\pi^*D=D^{\tilde{X}}+\sum_{i=1}^m b_i(2\beta+1)F_i^{\tilde{X}}. 
\]
By Proposition \ref{KpM_prop}, 
$\pi^*D-(x/(ab))F_m^*=\nu^*(\sigma^*D-x F)$ is effective for 
$x\in[0$, $b(2\beta+1)]$. We have 
\begin{eqnarray*}
\left(\pi^*L-\frac{x}{ab}F_m^*\right)\cdot E_1^{\tilde{X}}&=&
\beta, \\
\left(\pi^*L-\frac{x}{ab}F_m^*\right)\cdot E_2^{\tilde{X}}&=&
1, \\
\left(\pi^*L-\frac{x}{ab}F_m^*\right)\cdot E^{\tilde{X}}&=&
\beta-\frac{x}{a}, \\
\left(\pi^*L-\frac{x}{ab}F_m^*\right)\cdot F^{\tilde{X}}&=&
\frac{x}{ab}.
\end{eqnarray*}
Note that $a\beta<b(2\beta+1)$. Thus 
$\sigma^*L-x F$ is nef for $x\in[0$, $a\beta]$, and $\sigma^*L-a\beta F$ induces 
the birational contraction $\phi\colon Y\to Y'$ of $E^Y$. Set 
$\nu':=\phi\circ\nu\colon\tilde{X}\to Y'$. 
By Lemma \ref{mono_lem}, we have $\left(E^{Y\cdot 2}\right)=-(a+b)/a$. 
Thus we have 
\begin{equation}\label{11-1_eqn}
\pi^*D-\frac{x}{ab}F_m^*={\nu'}^*\nu'_*\left(\pi^*D-\frac{x}{ab}F_m^*\right)
+\frac{x-a\beta}{a+b}\nu^*E^Y.
\end{equation}
For any $x\in[a\beta$, $b(2\beta+1)]$, we have 
$\nu'_*\left(\pi^*D-(x/(ab))F_m^*\right)\geq 0$ and 
\begin{eqnarray*}
\left(\pi^*L-\frac{x}{ab}F_m^*-\frac{x-a\beta}{a+b}\nu^*E^Y
\right)\cdot E_1^{\tilde{X}}&=&
\frac{2a\beta+b\beta-x}{a+b}, \\
\left(\pi^*L-\frac{x}{ab}F_m^*-\frac{x-a\beta}{a+b}\nu^*E^Y
\right)\cdot E_2^{\tilde{X}}&=&
\frac{a\beta+a+b-x}{a+b}, \\
\left(\pi^*L-\frac{x}{ab}F_m^*-\frac{x-a\beta}{a+b}\nu^*E^Y
\right)\cdot F^{\tilde{X}}&=&
\frac{b\beta+x}{b(a+b)}.
\end{eqnarray*}
Note that $2a\beta+b\beta<\min\{a\beta+a+b$, $b(2\beta+1)\}$. Thus, 
for any $x\in[a\beta$, $2a\beta+b\beta]$, \eqref{11-1_eqn} 
gives the Zariski decomposition, hence
\begin{eqnarray*}
\vol_Y(\sigma^*L-x F)&=&\left(\pi^*L-\frac{x}{ab}F_m^*-
\frac{x-a\beta}{a+b}\nu^*E^Y\right)^{\cdot 2}\\
&=&3\beta^2+4\beta-\frac{x^2}{ab}+\frac{(x-a\beta)^2}{a(a+b)}.
\end{eqnarray*}
Moreover, $\phi_*\left(\sigma^*L-(2a\beta+b\beta)F\right)$ induces the birational 
contraction $\phi'\colon Y'\to Y''$ of $E_1^{Y'}$. Set 
$\nu'':=\phi'\circ\nu'\colon\tilde{X}\to Y''$. 
By Lemma \ref{mono_lem}, we have 
$\left(E^{Y\cdot 2}\right)=-(a+b)/a$, $\left(E^Y\cdot E_1^Y\right)=1$ 
and $\left(E_1^{Y\cdot 2}\right)=-1$. 
Thus we have 
\begin{equation}\label{11-2_eqn}
\pi^*D-\frac{x}{ab}F_m^*={\nu''}^*\nu''_*\left(\pi^*D-\frac{x}{ab}F_m^*\right)
+\nu^*\left(\frac{x-2a\beta}{b}E^Y+\frac{x-(2a\beta+b\beta)}{b}E_1^Y\right).
\end{equation}

For any $x\in[2a\beta+b\beta$, $b(2\beta+1)]$, we have 
$\nu''_*\left(\pi^*D-(x/(ab))F_m^*\right)\geq 0$ and 
\begin{eqnarray*}
\left(\pi^*L-\frac{x}{ab}F_m^*-\nu^*\left(\frac{x-2a\beta}{b}E^Y
+\frac{x-(2a\beta+b\beta)}{b}E_1^Y\right)\right)
\cdot E_2^{\tilde{X}}
&=&
\frac{2a\beta+b-x}{b}, \\
\left(\pi^*L-\frac{x}{ab}F_m^*-\nu^*\left(\frac{x-2a\beta}{b}E^Y
+\frac{x-(2a\beta+b\beta)}{b}E_1^Y\right)\right)
\cdot F^{\tilde{X}}
&=&
\frac{2\beta}{b}.
\end{eqnarray*}
Note that $b(2\beta+1)<2a\beta+b$. 
Thus, 
for any $x\in[2a\beta+b\beta$, $b(2\beta+1)]$, 
\eqref{11-2_eqn} gives the Zariski decomposition, hence
\begin{eqnarray*}
&&\vol_Y(\sigma^*L-x F)\\
&=&
\left(\pi^*L-\frac{x}{ab}F_m^*-\nu^*\left(\frac{x-2a\beta}{b}E^Y
+\frac{x-(2a\beta+b\beta)}{b}E_1^Y\right)\right)^{\cdot 2}\\
&=&3\beta^2+4\beta-\frac{x^2}{ab}+\frac{(x-a\beta)^2}{a(a+b)}
+\frac{(x-(2a\beta+b\beta))^2}{b(a+b)}.
\end{eqnarray*}
By Proposition \ref{pltK_prop}, for $x\geq b(2\beta+1)$, 
\[
\vol_Y(\sigma^*L-x F)\leq \frac{1}{b(a-b)}\left(x-\left(b(2\beta+1)
+2(a-b)\beta\right)\right)^2.
\]
In particular, we have 
\begin{eqnarray*}
S(F)&\leq&\frac{1}{3\beta^2+4\beta}\biggl(\int_0^{b(2\beta+1)}\left(3\beta^2
+4\beta-\frac{x^2}{ab}\right)dx\\
&&+\int_{a\beta}^{b(2\beta+1)}
\frac{(x-a\beta)^2}{a(a+b)}dx+\int_{2a\beta+b\beta}^{b(2\beta+1)}
\frac{(x-(2a\beta+b\beta))^2}{b(a+b)}dx\\
&&+\int_{b(2\beta+1)}^{b(2\beta+1)+2(a-b)\beta}
\frac{(x-(b(2\beta+1)+2(a-b)\beta))^2}{b(a-b)}dx\biggr)\\
&=&\frac{a(3\beta^2+12\beta)+b(7\beta^2+12\beta+6)}{3(3\beta+4)}. 
\end{eqnarray*}
Thus we have 
\begin{eqnarray*}
\frac{A_{X, \Delta}(F)}{S(F)}&\geq&
\frac{3(3\beta+4)(\beta+(a/b))}{(a/b)(3\beta^2+12\beta)+7\beta^2+12\beta+6}\\
&\geq&\frac{3(3\beta+4)(\beta+1)}{2(5\beta^2+12\beta+3)}>\frac{6}{5}
\end{eqnarray*}
since $a/b\in[1$, $2]$. 

\noindent\underline{\textbf{Step 12}}\\
Assume that $p_1\in C\cap E$ and $j_C=1$. By Step 11, we can assume that 
$k=2$ and $p_2\in E^{X_1}$. For 
$D=(\beta+1)E_1+2\beta E_2+(2\beta+1)E$, we can 
inductively check that 
\[
\pi^*D=D^{\tilde{X}}+\sum_{i=1}^m a_i(2\beta+1)F_i^{\tilde{X}}. 
\]
By Proposition \ref{KpM_prop}, 
$\pi^*D-(x/(ab))F_m^*=\nu^*(\sigma^*D-x F)$ is effective for 
$x\in[0$, $a(2\beta+1)]$. We have 
\begin{eqnarray*}
\left(\pi^*L-\frac{x}{ab}F_m^*\right)\cdot E_1^{\tilde{X}}&=&
\beta, \\
\left(\pi^*L-\frac{x}{ab}F_m^*\right)\cdot E_2^{\tilde{X}}&=&
1, \\
\left(\pi^*L-\frac{x}{ab}F_m^*\right)\cdot E^{\tilde{X}}&=&
\beta-\frac{x}{b}, \\
\left(\pi^*L-\frac{x}{ab}F_m^*\right)\cdot F^{\tilde{X}}&=&
\frac{x}{ab}.
\end{eqnarray*}
Note that $b\beta<a(2\beta+1)$. Thus 
$\sigma^*L-x F$ is nef for $x\in[0$, $b\beta]$, and $\sigma^*L-b\beta F$ induces 
the birational contraction $\phi\colon Y\to Y'$ of $E^Y$. Set 
$\nu':=\phi\circ\nu\colon\tilde{X}\to Y'$. 
By Lemma \ref{mono_lem}, we have $\left(E^{Y\cdot 2}\right)=-(a+b)/b$. 
Thus we have 
\begin{equation}\label{12-1_eqn}
\pi^*D-\frac{x}{ab}F_m^*={\nu'}^*\nu'_*\left(\pi^*D-\frac{x}{ab}F_m^*\right)
+\frac{x-b\beta}{a+b}\nu^*E^Y.
\end{equation}
For any $x\in[b\beta$, $a(2\beta+1)]$, we have 
$\nu'_*\left(\pi^*D-(x/(ab))F_m^*\right)\geq 0$ and 
\begin{eqnarray*}
\left(\pi^*L-\frac{x}{ab}F_m^*-\frac{x-b\beta}{a+b}\nu^*E^Y
\right)\cdot E_1^{\tilde{X}}&=&
\frac{a\beta+2b\beta-x}{a+b}, \\
\left(\pi^*L-\frac{x}{ab}F_m^*-\frac{x-b\beta}{a+b}\nu^*E^Y
\right)\cdot E_2^{\tilde{X}}&=&
\frac{b\beta+a+b-x}{a+b}, \\
\left(\pi^*L-\frac{x}{ab}F_m^*-\frac{x-b\beta}{a+b}\nu^*E^Y
\right)\cdot F^{\tilde{X}}&=&
\frac{a\beta+x}{a(a+b)}.
\end{eqnarray*}
Note that $a\beta+2b\beta<\min\{b\beta+a+b$, $a(2\beta+1)\}$. Thus, 
for any $x\in[a\beta$, $a\beta+2b\beta]$, \eqref{12-1_eqn} 
gives the Zariski decomposition, hence
\begin{eqnarray*}
\vol_Y(\sigma^*L-x F)&=&\left(\pi^*L-\frac{x}{ab}F_m^*-
\frac{x-b\beta}{a+b}\nu^*E^Y\right)^{\cdot 2}\\
&=&3\beta^2+4\beta-\frac{x^2}{ab}+\frac{(x-b\beta)^2}{b(a+b)}.
\end{eqnarray*}
Moreover, $\phi_*\left(\sigma^*L-(a\beta+2b\beta)F\right)$ induces the birational 
contraction $\phi'\colon Y'\to Y''$ of $E_1^{Y'}$. Set 
$\nu'':=\phi'\circ\nu'\colon\tilde{X}\to Y''$. 
By Lemma \ref{mono_lem}, we have 
$\left(E^{Y\cdot 2}\right)=-(a+b)/b$, $\left(E^Y\cdot E_1^Y\right)=1$ 
and $\left(E_1^{Y\cdot 2}\right)=-1$. 
Thus we have 
\begin{equation}\label{12-2_eqn}
\pi^*D-\frac{x}{ab}F_m^*={\nu''}^*\nu''_*\left(\pi^*D-\frac{x}{ab}F_m^*\right)
+\nu^*\left(\frac{x-2b\beta}{a}E^Y+\frac{x-(a\beta+2b\beta)}{a}E_1^Y\right).
\end{equation}

For any $x\in[a\beta+2b\beta$, $a(2\beta+1)]$, we have 
$\nu''_*\left(\pi^*D-(x/(ab))F_m^*\right)\geq 0$ and 
\begin{eqnarray*}
&&\left(\pi^*L-\frac{x}{ab}F_m^*-\nu^*\left(\frac{x-2b\beta}{a}E^Y
+\frac{x-(a\beta+2b\beta)}{a}E_1^Y\right)\right)
\cdot E_2^{\tilde{X}}
=
\frac{2b\beta+a-x}{a}, \\
&&\left(\pi^*L-\frac{x}{ab}F_m^*-\nu^*\left(\frac{x-2b\beta}{a}E^Y
+\frac{x-(a\beta+2b\beta)}{a}E_1^Y\right)\right)
\cdot F^{\tilde{X}}
=
\frac{2\beta}{a}.
\end{eqnarray*}
Note that $2b\beta+a<a(2\beta+1)$. 
Thus, 
for any $x\in[a\beta+2b\beta$, $2b\beta+a]$, 
\eqref{12-2_eqn} gives the Zariski decomposition, hence
\begin{eqnarray*}
&&\vol_Y(\sigma^*L-x F)\\
&=&
\left(\pi^*L-\frac{x}{ab}F_m^*-\nu^*\left(\frac{x-2b\beta}{a}E^Y
+\frac{x-(a\beta+2b\beta)}{a}E_1^Y\right)\right)^{\cdot 2}\\
&=&3\beta^2+4\beta-\frac{x^2}{ab}+\frac{(x-b\beta)^2}{b(a+b)}
+\frac{(x-(a\beta+2b\beta))^2}{a(a+b)}.
\end{eqnarray*}
By Proposition \ref{pltK_prop}, for $x\geq 2b\beta+a$, 
\[
\vol_Y(\sigma^*L-x F)\leq \frac{1}{a(a-b)}\left(x-\left(2b\beta+a
+2(a-b)\beta\right)\right)^2.
\]
In particular, we have 
\begin{eqnarray*}
S(F)&\leq&\frac{1}{3\beta^2+4\beta}\biggl(\int_0^{2b\beta+a}\left(3\beta^2
+4\beta-\frac{x^2}{ab}\right)dx\\
&&+\int_{b\beta}^{2b\beta+a}
\frac{(x-b\beta)^2}{b(a+b)}dx+\int_{a\beta+2b\beta}^{2b\beta+a}
\frac{(x-(a\beta+2b\beta))^2}{a(a+b)}dx\\
&&+\int_{2b\beta+a}^{2b\beta+a+2(a-b)\beta}
\frac{(x-(2b\beta+a+2(a-b)\beta))^2}{a(a-b)}dx\biggr)\\
&=&\frac{a(7\beta^2+12\beta+6)+b(3\beta^2+12\beta)}{3(3\beta+4)}. 
\end{eqnarray*}
Thus we have 
\begin{eqnarray*}
\frac{A_{X, \Delta}(F)}{S(F)}&\geq&
\frac{3(3\beta+4)(\beta+(a/b))}{(a/b)(7\beta^2+12\beta+6)+3\beta^2+12\beta}\\
&>&\frac{3(3\beta+4)(\beta+1)}{2(5\beta^2+12\beta+3)}>\frac{6}{5}
\end{eqnarray*}
since $a/b\in(1$, $2]$. 

\noindent\underline{\textbf{Step 13}}\\
Assume that $p_1\in C\cap E$. By Steps 11 and 12, we may assume that 
$j_C=k\geq 2$. 
For $D=(\beta+1)E_1+2\beta E_2+(2\beta+1)E$, 
we can inductively check that 
\[
\pi^*D=D^{\tilde{X}}+\sum_{i=1}^m b_i(2\beta+1)F_i^{\tilde{X}}. 
\]
By Proposition \ref{KpM_prop}, 
$\pi^*D-(x/(ab))F_m^*=\nu^*(\sigma^*D-x F)$ is effective for 
$x\in[0$, $b(2\beta+1)]$. 
For $D_4=\beta C+E_1+E$, 
we can inductively check that 
\[
\pi^*D_4=D_4^{\tilde{X}}+\sum_{i=1}^m (a_i\beta+b_i)F_i^{\tilde{X}}. 
\]
By Proposition \ref{KpM_prop}, 
$\pi^*D_4-(x/(ab))F_m^*=\nu^*(\sigma^*D_4-x F)$ is effective for 
$x\in[0$, $a\beta+b]$. 
We have 
\begin{eqnarray*}
\left(\pi^*L-\frac{x}{ab}F_m^*\right)\cdot C^{\tilde{X}}&=&
3\beta+2-\frac{x}{b}, \\
\left(\pi^*L-\frac{x}{ab}F_m^*\right)\cdot E_1^{\tilde{X}}&=&
\beta, \\
\left(\pi^*L-\frac{x}{ab}F_m^*\right)\cdot E_2^{\tilde{X}}&=&
1, \\
\left(\pi^*L-\frac{x}{ab}F_m^*\right)\cdot E^{\tilde{X}}&=&
\beta-\frac{x}{a}, \\
\left(\pi^*L-\frac{x}{ab}F_m^*\right)\cdot F^{\tilde{X}}&=&
\frac{x}{ab}.
\end{eqnarray*}

\underline{The case $a/b\geq (3\beta+2)/\beta$}\,\,
We have 
$3b\beta+2b\leq a\beta<a\beta+b$ in this case. 
Thus 
$\sigma^*L-x F$ is nef for $x\in[0$, $3b\beta+2b]$. By Proposition \ref{pltK_prop}, 
\[
S(F)\leq\frac{a(3\beta^2+4\beta)+b(3\beta+2)^2}{3(3\beta+2)}. 
\]
Thus we have 
\begin{eqnarray*}
\frac{A_{X, \Delta}(F)}{S(F)}\geq
\frac{3(3\beta+2)((a/b)\beta+1)}{(a/b)(3\beta^2+4\beta)+(3\beta+2)^2}
\geq\frac{3}{2}>\frac{6}{5}
\end{eqnarray*}
since $a/b\in[(3\beta+2)/\beta$, $\infty)$. 

\underline{The case $(2\beta+1)/\beta\leq a/b< (3\beta+2)/\beta$}\,\,
We have 
$b(2\beta+1)\leq a\beta<b(3\beta+2)$ in this case. 
Set 
\[
t:=\frac{a\beta-2b\beta-b}{(a-2b)\beta}\,\in\,[0,\,1)\cap\Q
\]
and 
\[
D_t:=tD_4+(1-t)D\sim_\Q L. 
\]
Then we have 
\[
\pi^*D_t=D_t^{\tilde{X}}+\sum_{i=1}^m \left(t (a_i\beta+b_i)+
(1-t)b_i(2\beta+1)\right)F_i^{\tilde{X}}.
\]
Thus $\pi^*D_t-(x/(ab))F_m^*=\nu^*(\sigma^*D_t-x F)$ is effective and nef for 
$x\in[0$, $a\beta]$. By Proposition \ref{pltK_prop}, 
\[
S(F)\leq\frac{a\beta+b(3\beta+4)}{3}. 
\]
Thus we have 
\begin{eqnarray*}
\frac{A_{X, \Delta}(F)}{S(F)}\geq
\frac{3((a/b)\beta+1)}{(a/b)\beta+3\beta+4}
\geq\frac{6}{5}
\end{eqnarray*}
since $a/b\in[(2\beta+1)/\beta$, $(3\beta+2)/\beta)$. 

\underline{The case $a/b< (2\beta+1)/\beta$}\,\,
We have $a\beta<b(2\beta+1)$ in this case. 
Thus 
$\sigma^*L-x F$ is nef for $x\in[0$, $a\beta]$, and $\sigma^*L-a\beta F$ induces 
the birational contraction $\phi\colon Y\to Y'$ of $E^Y$. Set 
$\nu':=\phi\circ\nu\colon\tilde{X}\to Y'$. 
By Lemma \ref{mono_lem}, we have $\left(E^{Y\cdot 2}\right)=-(a+b)/a$. 
Thus we have 
\begin{equation}\label{13-1_eqn}
\pi^*D-\frac{x}{ab}F_m^*={\nu'}^*\nu'_*\left(\pi^*D-\frac{x}{ab}F_m^*\right)
+\frac{x-a\beta}{a+b}\nu^*E^Y.
\end{equation}
For any $x\in[a\beta$, $b(2\beta+1)]$, we have 
$\nu'_*\left(\pi^*D-(x/(ab))F_m^*\right)\geq 0$ and 
\begin{eqnarray*}
\left(\pi^*L-\frac{x}{ab}F_m^*-\frac{x-a\beta}{a+b}\nu^*E^Y
\right)\cdot E_1^{\tilde{X}}&=&
\frac{2a\beta+b\beta-x}{a+b}, \\
\left(\pi^*L-\frac{x}{ab}F_m^*-\frac{x-a\beta}{a+b}\nu^*E^Y
\right)\cdot E_2^{\tilde{X}}&=&
\frac{a\beta+a+b-x}{a+b}, \\
\left(\pi^*L-\frac{x}{ab}F_m^*-\frac{x-a\beta}{a+b}\nu^*E^Y
\right)\cdot F^{\tilde{X}}&=&
\frac{b\beta+x}{b(a+b)}.
\end{eqnarray*}
Note that the condition $2a\beta+b\beta<b(2\beta+1)$ is equivalent to 
the condition $a/b<(\beta+1)/(2\beta)$. 
For any $x\in[a\beta$, $\min\{2a\beta+b\beta$, $b(2\beta+1)\}]$, 
\eqref{13-1_eqn} gives the Zariski decomposition, 
hence
\begin{eqnarray*}
\vol_Y(\sigma^*L-x F)&=&\left(\pi^*L-\frac{x}{ab}F_m^*-
\frac{x-a\beta}{a+b}\nu^*E^Y\right)^{\cdot 2}\\
&=&3\beta^2+4\beta-\frac{x^2}{ab}+\frac{(x-a\beta)^2}{a(a+b)}.
\end{eqnarray*}

Assume firstly that $a/b\geq (\beta+1)/(2\beta)$. By Proposition \ref{pltK_prop}, 
for $x\geq b(2\beta+1)$, 
\begin{eqnarray*}
&&\vol_Y(\sigma^*L-x F)\\ 
&\leq&\frac{(3\beta+1)^2\left(x-\left(b(2\beta+1)+
\frac{a(4\beta^2+4\beta)+b(-5\beta^2-2\beta-1)}{3\beta+1}\right)\right)^2
}{(a+b)(a(4\beta^2+4\beta)+b(-5\beta^2-2\beta-1))}.
\end{eqnarray*}
In particular, we have
\begin{eqnarray*}
S(F)&\leq &\frac{1}{3\beta^2+4\beta}\biggl(b(2\beta+1)(3\beta^2+4\beta)
-\frac{b^3(2\beta+1)^3}{3ab}\\
&&+\frac{(b(2\beta+1)-a\beta)^3}{3a(a+b)}
+\frac{(a(4\beta^2+4\beta)+b(-5\beta^2-2\beta-1))^2}{3(a+b)(3\beta+1)}\biggr)\\
&=&\frac{a(13\beta^3+31\beta^2+16\beta)+b(19\beta^3+45\beta^2+32\beta+4)}{
3(3\beta+1)(3\beta+4)}.
\end{eqnarray*}
Thus we have 
\begin{eqnarray*}
\frac{A_{X, \Delta}(F)}{S(F)}&\geq&
\frac{3(3\beta+1)(3\beta+4)((a/b)\beta+1)}{
(a/b)(13\beta^3+31\beta^2+16\beta)+19\beta^3+45\beta^2+32\beta+4}
>\frac{6}{5}
\end{eqnarray*}
since $a/b\in[(\beta+1)/(2\beta)$, $(2\beta+1)/\beta)$. 

Thus we may assume that $a/b<(\beta+1)/(2\beta)$. In this case, 
$\phi_*\left(\sigma^*L-(2a\beta+b\beta)F\right)$ induces the birational 
contraction $\phi'\colon Y'\to Y''$ of $E_1^{Y'}$. Set 
$\nu'':=\phi'\circ\nu'\colon\tilde{X}\to Y''$. 
By Lemma \ref{mono_lem}, we have 
$\left(E^{Y\cdot 2}\right)=-(a+b)/a$, $\left(E^Y\cdot E_1^Y\right)=1$ 
and $\left(E_1^{Y\cdot 2}\right)=-1$. 
Thus we have 
\begin{equation}\label{13-2_eqn}
\pi^*D-\frac{x}{ab}F_m^*={\nu''}^*\nu''_*\left(\pi^*D-\frac{x}{ab}F_m^*\right)
+\nu^*\left(\frac{x-2a\beta}{b}E^Y+\frac{x-(2a\beta+b\beta)}{b}E_1^Y\right).
\end{equation}
For any $x\in[2a\beta+b\beta$, $b(2\beta+1)]$, we have 
$\nu''_*\left(\pi^*D-(x/(ab))F_m^*\right)\geq 0$ and 
\begin{eqnarray*}
&&\left(\pi^*L-\frac{x}{ab}F_m^*-\nu^*\left(\frac{x-2a\beta}{b}E^Y
+\frac{x-(2a\beta+b\beta)}{b}E_1^Y\right)\right)
\cdot E_2^{\tilde{X}}
=
\frac{2a\beta+b-x}{b}, \\
&&\left(\pi^*L-\frac{x}{ab}F_m^*-\nu^*\left(\frac{x-2a\beta}{b}E^Y
+\frac{x-(2a\beta+b\beta)}{b}E_1^Y\right)\right)
\cdot F^{\tilde{X}}
=
\frac{2\beta}{b}.
\end{eqnarray*}
Thus, 
for any $x\in[2a\beta+b\beta$, $b(2\beta+1)]$, 
\eqref{13-2_eqn} gives the Zariski decomposition, hence
\begin{eqnarray*}
&&\vol_Y(\sigma^*L-x F)\\
&=&
\left(\pi^*L-\frac{x}{ab}F_m^*-\nu^*\left(\frac{x-2a\beta}{b}E^Y
+\frac{x-(2a\beta+b\beta)}{b}E_1^Y\right)\right)^{\cdot 2}\\
&=&3\beta^2+4\beta-\frac{x^2}{ab}+\frac{(x-a\beta)^2}{a(a+b)}
+\frac{(x-(2a\beta+b\beta))^2}{b(a+b)}.
\end{eqnarray*}
By Proposition \ref{pltK_prop}, for $x\geq b(2\beta+1)$, 
\[
\vol_Y(\sigma^*L-x F)\leq \frac{1}{b(a-b)}\left(x-\left(b(2\beta+1)
+2(a-b)\beta\right)\right)^2.
\]
In particular, we have 
\begin{eqnarray*}
S(F)&\leq&\frac{1}{3\beta^2+4\beta}\biggl(\int_0^{b(2\beta+1)}\left(3\beta^2
+4\beta-\frac{x^2}{ab}\right)dx\\
&&+\int_{a\beta}^{b(2\beta+1)}
\frac{(x-a\beta)^2}{a(a+b)}dx+\int_{2a\beta+b\beta}^{b(2\beta+1)}
\frac{(x-(2a\beta+b\beta))^2}{b(a+b)}dx\\
&&+\int_{b(2\beta+1)}^{b(2\beta+1)+2(a-b)\beta}
\frac{(x-(b(2\beta+1)+2(a-b)\beta))^2}{b(a-b)}dx\biggr)\\
&=&\frac{a(3\beta^2+12\beta)+b(7\beta^2+12\beta+6)}{3(3\beta+4)}. 
\end{eqnarray*}
Thus we have 
\begin{eqnarray*}
\frac{A_{X, \Delta}(F)}{S(F)}&\geq&
\frac{3(3\beta+4)((a/b)\beta+1)}{(a/b)(3\beta^2+12\beta)+7\beta^2+12\beta+6}\\
&>&\frac{3(3\beta+4)(\beta+3)}{17\beta^2+39\beta+24}>\frac{6}{5}
\end{eqnarray*}
since $a/b\in(1$, $(\beta+1)/(2\beta))$. Thus we have completed the proof. 
\end{proof}

\section{Prime divisors centered at special points, I}\label{caseI_section}

In this section, we prove the following: 

\begin{thm}\label{caseI_thm}
Let $(X, C)$ be an asymptotically log del Pezzo surface of type 
$(\operatorname{I.9B.}n)$, let $\eta\colon X\to\pr^1$ be the anti-log-canonical 
morphism, and let $q_1$, $q_2\in C$ be the ramification points of 
$\eta|_C\colon C\to\pr^1$. Take any exceptional prime divisor $F$ over $X$ such that 
$c_X(F)=q_1$. Assume that $\eta^{-1}(\eta(q_1))$ is smooth. Then, for any 
$\beta\in(0$, $1/(7n))\cap\Q$, we have 
\[
\frac{A_{X, \Delta}(F)}{S_L(F)}\geq 
\frac{3((4-n)\beta+4)(2\beta+1)}{(n^2-10n+24)\beta^2
+(-6n+36)\beta+12},
\]
where $\Delta:=(1-\beta)C$ and $L:=-(K_X+\Delta)$. Moreover, the above 
inequality is optimal; there uniquely exists a prime divisor over $X$ satisfying the above 
conditions such that equality holds. 
\end{thm}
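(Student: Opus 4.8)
The claimed value $R:=\dfrac{3((4-n)\beta+4)(2\beta+1)}{(n^2-10n+24)\beta^2+(-6n+36)\beta+12}$ is $<3/2$ by Remark \ref{main_rmk} \eqref{main_rmk3}, so there is nothing to prove when $A_{X,\Delta}(F)/S_L(F)\ge 3/2$. Assuming the ratio is $<3/2$, Proposition \ref{pltK_prop} \eqref{pltK_prop1} makes $F$ dreamy, and Proposition \ref{pltK_prop} \eqref{pltK_prop2} allows us to replace a non-plt-type $F$ by a plt-type prime divisor $F'$ with $c_X(F')\subseteq\{q_1\}$ and strictly smaller ratio; thus we may and do assume that $F$ is an exceptional plt-type divisor over $(X,(1-\beta)C)$ with $c_X(F)=q_1$ (which also forces any equality case to be of this kind). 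Throughout $\beta<1/(7n)\le 1/7<1/2$.

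Next I would pin down the local picture. Exactly two curves of $X$ pass through $q_1$: the boundary $C$ and the fibre $l_0:=\eta^{-1}(\eta(q_1))$, both smooth; since $q_1$ is a ramification point of the double cover $\eta|_C$ and $l_0$ is smooth by hypothesis, $C$ and $l_0$ are tangent of order exactly $2$ at $q_1$. Run the sequence of monoidal transforms with respect to $F$ from \S\ref{plt_section}, keeping the notation $k$, $(a,b)$, $j_C$; define $j_{l_0}$ analogously as in Lemma \ref{mono_lem}, and set $\theta:=a/b\ge 1$. By Lemma \ref{mono_lem} \eqref{mono_lem1}, $j_C\in\{1,k\}$; after the first blow-up $C^{X_1}$ and $l_0^{X_1}$ meet transversally, so either $p_2$ avoids $C^{X_1}\cup l_0^{X_1}$ (then $j_C=j_{l_0}=1$) or $p_2=C^{X_1}\cap l_0^{X_1}$, after which $C$ and $l_0$ cannot both be ``followed'', whence $(j_C,j_{l_0})$ lies in a short list with $\min\{j_C,j_{l_0}\}\le 2$. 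The key observation is that this picture near $q_1$ is independent of $n$: each combinatorial type occurs for every $n$, and the $n$-dependence enters only through $(C^{\cdot 2})_X=4-n$ and $V:=(L^{\cdot 2})=(4-n)\beta^2+4\beta$, because the $n$ singular fibres of $\eta$ are pairs of $(-1)$-curves disjoint from $q_1$, hence from $F$, and play no role in what follows.

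For each combinatorial type, let $\sigma\colon Y\to X$ be the extraction of $F$, compute $A_{X,\Delta}(F)$ from Proposition \ref{KpM_prop} \eqref{KpM_prop2}, and estimate $S_L(F)$ as follows. Using $L\sim_\R\beta C+l_0$ (Lemma \ref{easy_lem} \eqref{easy_lem1}) write $\sigma^*L-xF=\beta\,\sigma^*C+\sigma^*l_0-xF$, and follow the Zariski decomposition as $x$ increases: the nef threshold and the successive negative parts are governed by $l_0^Y$ and $C^Y$ (and, when $j_C=1$, by further exceptional curves $F_j^Y$), whose self-intersections and intersections with $F$ are supplied by Lemma \ref{mono_lem} \eqref{mono_lem2}--\eqref{mono_lem4} in terms of $a,b,j_C,j_{l_0}$ and $4-n$, while Proposition \ref{pltK_prop} \eqref{pltK_prop3} handles any remaining tail. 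For the optimal type one obtains $S_L(F)$ exactly by integrating $\vol(\sigma^*L-xF)$, a two- or three-piece function of $x$ (with $l_0^Y$ contracted at $x=2\beta$, and $C^Y$ contracted at $x=(4-n)\beta+2$ when $n\ge 3$), and for every other type the same scheme gives a lower bound on $A_{X,\Delta}(F)/S_L(F)$ strictly exceeding $R$. Minimizing over $\theta\ge 1$ and over the finitely many types, the infimum equals $R$, attained exactly once — by the divisor obtained by blowing up $q_1$ and then the point $C^{X_1}\cap l_0^{X_1}$, i.e. $(a,b)=(2,1)$, $j_C=j_{l_0}=k=2$, $A_{X,\Delta}(F)=2\beta+1$ — which is the equality case.

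The main obstacle is the bookkeeping in the previous step: for each combinatorial type and each range of $n$, one must identify the exact chain of curves entering the negative part of $\sigma^*L-xF$ and verify nefness of each successive positive part. This is the analogue, in the present situation of all $n$ but a single centre $q_1$, of the thirteen-step computation proving Theorem \ref{S7_thm}; the cases $n=1,2$ (where $(C^{\cdot 2})_X\ge 2$, so $C^Y$ need not be contracted) call for a slightly different tail analysis than $n\ge 3$. A secondary point requiring care is the exhaustiveness of the combinatorial enumeration together with strictness of the inequality off the unique optimal divisor, which is exactly what produces the optimality assertion. (The cruder reduction to the degree seven del Pezzo surface via Proposition \ref{easy_prop} is not sharp enough here — it only loses the factor $((4-n)\beta+4)/(3\beta+4)$ — so the direct computation on $X$ is essential.)
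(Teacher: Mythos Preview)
Your outline is correct and matches the paper's four-step proof closely: reduce to dreamy plt-type divisors, exploit the order-$2$ tangency of $C$ and the smooth fibre $l$ at $q_1$ to enumerate the combinatorial types via $j_C\in\{1,k\}$ (equivalently your $(j_C,j_{l_0})$), run the Zariski decomposition of $\sigma^*(\beta C+l)-xF$ with Proposition~\ref{pltK_prop}~\eqref{pltK_prop3} supplying the tail, and identify the unique minimizer $(a,b)=(2,1)$, $j_C=k=2$. One small correction to your final parenthetical: the paper \emph{does} invoke the reduction to the degree-seven surface (Proposition~\ref{easy_prop} together with the toric bound from Theorem~\ref{S7_thm} Step~1) to dispose of the subcase $j_C=1$, $k\ge 3$, where the resulting estimate $((4-n)\beta+4)(\beta+2)/(7\beta^2+12\beta+6)$ is already $>R$; your direct Zariski-decomposition approach would also work there but requires tracking an extra range of $\theta$.
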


\begin{proof}
The following proof is divided into 4 numbers of steps. 

\noindent\underline{\textbf{Step 1}}\\
By Proposition \ref{pltK_prop}, we may assume that $F$ is dreamy and plt-type over 
$(X, \Delta)$. We follow the notations in \S \ref{plt_section}. Moreover, set 
\[
j_C:=\max\left\{1\leq i\leq k\,|\,p_i\in C^{X_{i-1}}\right\}.
\]
By Lemma \ref{mono_lem}, we have $j_C\in\{1$, $k\}$. 
From Proposition \ref{KpM_prop}, 
we have 
\[
A_{X, \Delta}(F)=\begin{cases}
b\beta+a & \text{if }j_C=1, \\
a\beta+b & \text{if }j_C=k.
\end{cases}\]

Assume that $j_C=1$ and $k\geq 3$. Then we have 
\[
\frac{A_{X, \Delta}(F)}{A_{X, 0}(F)}>\frac{\beta+2}{3}
\]
as in Theorem \ref{S7_thm} Step 3. Take any birational morphism 
$\theta\colon X\to X'$ over $\pr^1$ such that $(X', \theta_*C)$ is of type 
$(\operatorname{I.9B.}1)$ as in Lemma \ref{easy_lem}. 
Obviously, $\theta$ is an isomorphism at $c_X(F)$. 
By Proposition \ref{easy_prop}, we have 
\[
S_L(F)\leq\frac{3\beta+4}{(4-n)\beta+4}\cdot S_{L'}(F),
\]
where $L':=-(K_{X'}+\theta_*\Delta)$. As we have already seen in 
Theorem \ref{S7_thm} Step 1, we have 
\[
\frac{A_{X,0}(F)}{S_{L'}(F)}\geq \frac{3(3\beta+4)}{7\beta^2+12\beta+6}. 
\]
Thus we have 
\begin{eqnarray*}
\frac{A_{X,\Delta}(F)}{S_{L}(F)}&>&\frac{((4-n)\beta+4)(\beta+2)}{7\beta^2+12\beta+6}\\
&>&\frac{3((4-n)\beta+4)(2\beta+1)}{(n^2-10n+24)\beta^2
+(-6n+36)\beta+12}.
\end{eqnarray*}

\noindent\underline{\textbf{Step 2}}\\
Set $l:=\eta^{-1}(\eta(q_1))$ and 
\[
D:=\beta C+l\sim_\Q L. 
\]
Assume that $j_C=1$. We may assume that $k\leq 2$ by Step 1. 
Then we can inductively check that 
\[
\pi^*D=D^{\tilde{X}}+\sum_{i=1}^m b_i(\beta+1)F_i^{\tilde{X}}. 
\]
By Proposition \ref{KpM_prop}, 
$\pi^*D-(x/(ab))F_m^*=\nu^*(\sigma^*D-x F)$ is effective for 
$x\in[0$, $b(\beta+1)]$. We have 
\begin{eqnarray*}
\left(\pi^*L-\frac{x}{ab}F_m^*\right)\cdot C^{\tilde{X}}&=&
(4-n)\beta+2-\frac{x}{a}, \\
\left(\pi^*L-\frac{x}{ab}F_m^*\right)\cdot l^{\tilde{X}}&=&
2\beta-\frac{x}{a}, \\
\left(\pi^*L-\frac{x}{ab}F_m^*\right)\cdot F^{\tilde{X}}&=&
\frac{x}{ab}.
\end{eqnarray*}
Note that $a((4-n)\beta+2)>2a\beta$ and $b(\beta+1)>2a\beta$. Thus 
$\sigma^*L-x F$ is nef for $x\in[0$, $2a\beta]$, and $\sigma^*L-2a\beta F$ induces 
the birational contraction $\phi\colon Y\to Y'$ of $l^Y$. Set 
$\nu':=\phi\circ\nu\colon\tilde{X}\to Y'$. 
By Lemma \ref{mono_lem}, we have $\left(l^{Y\cdot 2}\right)=-b/a$. 
Thus we have 
\begin{equation}\label{2_1_eqn}
\pi^*D-\frac{x}{ab}F_m^*={\nu'}^*\nu'_*\left(\pi^*D-\frac{x}{ab}F_m^*\right)
+\frac{x-2a\beta}{b}\nu^*l^Y.
\end{equation}
For any $x\in[2a\beta$, $b(\beta+1)]$, we have 
$\nu'_*\left(\pi^*D-(x/(ab))F_m^*\right)\geq 0$ and 
\begin{eqnarray*}
\left(\pi^*L-\frac{x}{ab}F_m^*-\frac{x-2a\beta}{b}\nu^*l^Y
\right)\cdot C^{\tilde{X}}&=&
\frac{2\left(2a\beta+b\left(\left(1-\frac{n}{2}\right)\beta+1\right)-x\right)}{b}, \\
\left(\pi^*L-\frac{x}{ab}F_m^*-\frac{x-2a\beta}{b}\nu^*l^Y
\right)\cdot F^{\tilde{X}}&=&
\frac{2\beta}{b}.
\end{eqnarray*}
Note that $b((3-n)\beta/2+1)\leq
\min\{b(\beta+1)$, $2a\beta+b((2-n)\beta/2+1)\}$. Thus, 
for any $x\in[2a\beta$, $b((3-n)\beta/2+1)]$, \eqref{2_1_eqn} gives the Zariski 
decomposition, hence
\begin{eqnarray*}
\vol_Y(\sigma^*L-x F)&=&\left(\pi^*L-\frac{x}{ab}F_m^*-
\frac{x-2a\beta}{b}\nu^*l^Y\right)^{\cdot 2}\\
&=&(4-n)\beta^2+4\beta-\frac{x^2}{ab}+\frac{(x-2a\beta)^2}{ab}.
\end{eqnarray*}
By Proposition \ref{pltK_prop}, for $x\geq b((3-n)\beta/2+1)$, 
\begin{eqnarray*}
&&\vol_Y(\sigma^*L-x F)\\
&\leq& \frac{4}{b(4a+(n-2)b)}\left(x-\left(b\left(\frac{(3-n)\beta}{2}+1\right)
+\frac{(4a+(n-2)b)\beta}{2}\right)\right)^2.
\end{eqnarray*}
In particular, we have 
\begin{eqnarray*}
&&S(F)\\
&\leq&\frac{1}{(4-n)\beta^2+4\beta}\Biggl(\int_0^{b((3-n)\beta/2+1)}
\left((4-n)\beta^2
+4\beta-\frac{x^2}{ab}\right)dx\\
&+&\int_{2a\beta}^{b((3-n)\beta/2+1)}
\frac{(x-2a\beta)^2}{ab}dx\\
&+&\int_{b((3-n)\beta/2+1)}^{b((3-n)\beta/2+1)+\frac{(4a+(n-2)b)\beta}{2}}
\frac{4\biggl(x-\biggl(b\left(\frac{(3-n)\beta}{2}+1\right)
+\frac{(4a+(n-2)b)\beta}{2}\biggr)\biggr)^2}{b(4a+(n-2)b)}dx\Biggr)\\
&=&\frac{a((-4n+20)\beta^2+24\beta)+b((n^2-7n+13)\beta^2+(-6n+24)
\beta+12)}{6((4-n)\beta+4)}. 
\end{eqnarray*}
Thus we have 
\begin{eqnarray*}
\frac{A_{X, \Delta}(F)}{S(F)}&\geq&
\frac{6((4-n)\beta+4)(\beta+(a/b))}{(a/b)((-4n+20)\beta^2+24\beta)
+(n^2-7n+13)\beta^2+(-6n+24)\beta+12}\\
&\geq&
\frac{6((4-n)\beta+4)(\beta+1)}{(n^2-11n+33)\beta^2+(-6n+48)\beta+12}>\frac{3}{2}
\end{eqnarray*}
since $a/b\in[1$, $2]$. 

\noindent\underline{\textbf{Step 3}}\\
Assume that $j_C=k\geq 3$. 
For $D=\beta C+l$, we can inductively check that 
\[
\pi^*D=D^{\tilde{X}}+(\beta+1)F_1^{\tilde{X}}+\sum_{i=2}^m (a_i\beta+2b_i)F_i^{\tilde{X}}. 
\]
By Proposition \ref{KpM_prop}, 
$\pi^*D-(x/(ab))F_m^*=\nu^*(\sigma^*D-x F)$ is effective for 
$x\in[0$, $a\beta+2b]$. We have 
\begin{eqnarray*}
\left(\pi^*L-\frac{x}{ab}F_m^*\right)\cdot C^{\tilde{X}}&=&
(4-n)\beta+2-\frac{x}{b}, \\
\left(\pi^*L-\frac{x}{ab}F_m^*\right)\cdot l^{\tilde{X}}&=&
\frac{2(a\beta-x)}{a}, \\
\left(\pi^*L-\frac{x}{ab}F_m^*\right)\cdot F^{\tilde{X}}&=&
\frac{x}{ab}.
\end{eqnarray*}

\underline{The case $a/b\geq ((4-n)\beta+2)/\beta$}\,\,
We have $b((4-n)\beta+2)\leq a\beta$ in this case. 
Thus 
$\sigma^*L-x F$ is nef for $x\in[0$, $b((4-n)\beta+2)]$. By Proposition \ref{pltK_prop}, 
\[
S(F)\leq\frac{a((4-n)\beta^2+4\beta)+b((4-n)\beta+2)^2}{3((4-n)\beta+2)}. 
\]
Thus we have 
\begin{eqnarray*}
\frac{A_{X, \Delta}(F)}{S(F)}&\geq&
\frac{3((4-n)\beta+2)((a/b)\beta+1)}{(a/b)((4-n)\beta^2+4\beta)
+((4-n)\beta+2)^2}.
\end{eqnarray*}
Note that $a/b\in[((4-n)\beta+2)/\beta$, $\infty)$. 
If $n\leq 4$, then 
\[
\frac{A_{X, \Delta}(F)}{S(F)}\geq\frac{3}{2}; 
\]
if $n\geq 5$, then 
\begin{eqnarray*}
\frac{A_{X, \Delta}(F)}{S(F)}>\frac{3((4-n)\beta+2)}{(4-n)\beta+4}
>\frac{3((4-n)\beta+4)(2\beta+1)}{(n^2-10n+24)\beta^2
+(-6n+36)\beta+12}.
\end{eqnarray*}

\underline{The case $a/b<((4-n)\beta+2)/\beta$}\,\,
We have $a\beta<b((4-n)\beta+2)$ in this case. 
Thus 
$\sigma^*L-x F$ is nef for $x\in[0$, $a\beta]$, 
and $\sigma^*L-a\beta F$ induces 
the birational contraction $\phi\colon Y\to Y'$ of $l^Y$. Set 
$\nu':=\phi\circ\nu\colon\tilde{X}\to Y'$. 
By Lemma \ref{mono_lem}, we have $\left(l^{Y\cdot 2}\right)=-4b/a$. 
Thus we have 
\begin{equation}\label{3_1_eqn}
\pi^*D-\frac{x}{ab}F_m^*={\nu'}^*\nu'_*\left(\pi^*D-\frac{x}{ab}F_m^*\right)
+\frac{x-a\beta}{2b}\nu^*l^Y.
\end{equation}
For any $x\in[a\beta$, $a\beta+2b]$, we have 
$\nu'_*\left(\pi^*D-(x/(ab))F_m^*\right)\geq 0$ and 
\begin{eqnarray*}
\left(\pi^*L-\frac{x}{ab}F_m^*-\frac{x-a\beta}{2b}\nu^*l^Y
\right)\cdot C^{\tilde{X}}&=&
\frac{b((4-n)\beta+2)-x}{b}, \\
\left(\pi^*L-\frac{x}{ab}F_m^*-\frac{x-a\beta}{2b}\nu^*l^Y
\right)\cdot F^{\tilde{X}}&=&
\frac{\beta}{b}.
\end{eqnarray*}
Note that the condition $a\beta+2b<b((4-n)\beta+2)$ is equivalent to the condition 
$a/b<4-n$. Since $a/b>2$, this occurs only when $n=1$. 
For any $x\in[a\beta$, $\min\{a\beta+2b$, $b((4-n)\beta+2)\}]$, 
\eqref{3_1_eqn} gives the Zariski decomposition, hence
\begin{eqnarray*}
\vol_Y(\sigma^*L-x F)&=&\left(\pi^*L-\frac{x}{ab}F_m^*-
\frac{x-a\beta}{2b}\nu^*l^Y\right)^{\cdot 2}\\
&=&(4-n)\beta^2+4\beta-\frac{x^2}{ab}+\frac{(x-a\beta)^2}{ab}.
\end{eqnarray*}

Assume that $a/b\geq 4-n$. 
By Proposition \ref{pltK_prop}, for $x\geq b((4-n)\beta+2)$, 
\[
\vol_Y(\sigma^*L-x F)\leq \frac{1}{b(a+(n-4)b)}\left(x-\left(b((4-n)\beta+2)
+(a+(n-4)b)\beta\right)\right)^2.
\]
In particular, we have 
\begin{eqnarray*}
&&S(F)\\
&\leq&\frac{1}{(4-n)\beta^2+4\beta}\biggl(\int_0^{b((4-n)\beta+2)}
\left((4-n)\beta^2+4\beta-\frac{x^2}{ab}\right)dx\\
&+&\int_{a\beta}^{b((4-n)\beta+2)}\frac{(x-a\beta)^2}{ab}dx\\
&+&\int_{b((4-n)\beta+2)}^{b((4-n)\beta+2)+(a+(n-4)b)\beta}
\frac{\left(x-\left(b((4-n)\beta+2)+(a+(n-4)b)\beta\right)\right)^2}{b(a+(n-4)b)}
dx\biggr)\\
&=&\frac{a((-n+4)\beta^2+6\beta)+b((n^2-8n+16)\beta^2+(-6n+24)\beta
+12)}{3((4-n)\beta+4)}. 
\end{eqnarray*}
Thus we have 
\begin{eqnarray*}
\frac{A_{X, \Delta}(F)}{S(F)}&\geq&
\frac{3((4-n)\beta+4)((a/b)\beta+1)}{(a/b)((-n+4)\beta^2+6\beta)+
(n^2-8n+16)\beta^2+(-6n+24)\beta+12}\\
&>&\frac{3((4-n)\beta+4)(2\beta+1)}{(n^2-10n+24)\beta^2+(-6n+36)\beta+12}
\end{eqnarray*}
since $a/b\in(2$, $((4-n)\beta+2)/\beta)$. 

Assume that $n=1$ and $a/b< 3$. 
By Proposition \ref{pltK_prop}, for $x\geq a\beta+2b$, 
\[
\vol_Y(\sigma^*L-x F)\leq \frac{1}{b(3b-a)}\left(x-\left(a\beta+2b
+(3b-a)\beta\right)\right)^2.
\]
In particular, we have 
\begin{eqnarray*}
S(F)&\leq&\frac{1}{3\beta^2+4\beta}\biggl(\int_0^{a\beta+2b}
\left(3\beta^2+4\beta-\frac{x^2}{ab}\right)dx
+\int_{a\beta}^{a\beta+2b}\frac{(x-a\beta)^2}{ab}dx\\
&&+\int_{a\beta+2b}^{a\beta+2b+(3b-a)\beta}
\frac{\left(x-\left(a\beta+2b+(3b-a)\beta\right)\right)^2}{b(3b-a)}
dx\biggr)\\
&=&\frac{a(\beta^2+2\beta)+b(3\beta^2+6\beta+4)}{3\beta+4}. 
\end{eqnarray*}
Thus we have 
\begin{eqnarray*}
\frac{A_{X, \Delta}(F)}{S(F)}&\geq&
\frac{(3\beta+4)((a/b)\beta+1)}{(a/b)(\beta^2+2\beta)+
3\beta^2+6\beta+4}\\
&>&\frac{(3\beta+4)(2\beta+1)}{5\beta^2+10\beta+4}
\end{eqnarray*}
since $a/b\in(2$, $3)$. 

\noindent\underline{\textbf{Step 4}}\\
Assume that $j_C=k=2$. 
For $D=\beta C+l$, we can inductively check that 
\[
\pi^*D=D^{\tilde{X}}+\sum_{i=1}^m a_i(\beta+1)F_i^{\tilde{X}}. 
\]
By Proposition \ref{KpM_prop}, 
$\pi^*D-(x/(ab))F_m^*=\nu^*(\sigma^*D-x F)$ is effective for 
$x\in[0$, $a(\beta+1)]$. We have 
\begin{eqnarray*}
\left(\pi^*L-\frac{x}{ab}F_m^*\right)\cdot C^{\tilde{X}}&=&
(4-n)\beta+2-\frac{x}{b}, \\
\left(\pi^*L-\frac{x}{ab}F_m^*\right)\cdot l^{\tilde{X}}&=&
\frac{2b\beta-x}{b}, \\
\left(\pi^*L-\frac{x}{ab}F_m^*\right)\cdot F^{\tilde{X}}&=&
\frac{x}{ab}.
\end{eqnarray*}
Note that $2b\beta<a(\beta+1)$ and $2b\beta<b((4-n)\beta+2)$. 
Thus 
$\sigma^*L-x F$ is nef for $x\in[0$, $2b\beta]$, 
and $\sigma^*L-2b\beta F$ induces 
the birational contraction $\phi\colon Y\to Y'$ of $l^Y$. Set 
$\nu':=\phi\circ\nu\colon\tilde{X}\to Y'$. 
By Lemma \ref{mono_lem}, we have $\left(l^{Y\cdot 2}\right)=-a/b$. 
Thus we have 
\begin{equation}\label{4_1_eqn}
\pi^*D-\frac{x}{ab}F_m^*={\nu'}^*\nu'_*\left(\pi^*D-\frac{x}{ab}F_m^*\right)
+\frac{x-2b\beta}{a}\nu^*l^Y.
\end{equation}
For any $x\in[2b\beta$, $a(\beta+1)]$, we have 
$\nu'_*\left(\pi^*D-(x/(ab))F_m^*\right)\geq 0$ and 
\begin{eqnarray*}
\left(\pi^*L-\frac{x}{ab}F_m^*-\frac{x-2b\beta}{a}\nu^*l^Y
\right)\cdot C^{\tilde{X}}&=&
\frac{2\left(a\left(-\frac{n}{2}+1\right)\beta+2b\beta+a-x\right)}{a}, \\
\left(\pi^*L-\frac{x}{ab}F_m^*-\frac{x-2b\beta}{a}\nu^*l^Y
\right)\cdot F^{\tilde{X}}&=&
\frac{2\beta}{a}.
\end{eqnarray*}
Note that the condition $a(\beta+1)<a(-n+2)\beta/2+2b\beta+a$ is equivalent to the 
condition 
$a/b<4/n$. 
For any $x\in[2b\beta$, $\min\{a(\beta+1)$, $a(-n+2)\beta/2+2b\beta+a\}]$, 
\eqref{4_1_eqn} gives the Zariski decomposition, hence
\begin{eqnarray*}
\vol_Y(\sigma^*L-x F)&=&\left(\pi^*L-\frac{x}{ab}F_m^*-
\frac{x-2b\beta}{a}\nu^*l^Y\right)^{\cdot 2}\\
&=&(4-n)\beta^2+4\beta-\frac{x^2}{ab}+\frac{(x-2b\beta)^2}{ab}.
\end{eqnarray*}

\underline{The case $a/b\geq 4/n$}\,\,
This occurs only when $n\geq 2$ since $a/b\leq 2$. 
We have $a(-n+2)\beta/2+2b\beta+a\leq a(\beta+1)$ in this case. 
By Proposition \ref{pltK_prop}, for $x\geq a(-n+2)\beta/2+2b\beta+a$, 
\[
\vol_Y(\sigma^*L-x F)\leq \frac{4}{a(na-4b)}\left(x-\left(a\frac{-n+2}{2}\beta+2b\beta+a
+\frac{na-4b}{2}\beta\right)\right)^2.
\]
In particular, we have 
\begin{eqnarray*}
&&S(F)\\
&\leq&\frac{1}{(4-n)\beta^2+4\beta}\biggl(\int_0^{a\frac{-n+2}{2}\beta+2b\beta+a}
\left((4-n)\beta^2+4\beta-\frac{x^2}{ab}\right)dx\\
&+&\int_{2b\beta}^{a\frac{-n+2}{2}\beta+2b\beta+a}\frac{(x-2b\beta)^2}{ab}dx\\
&+&\int_{a\frac{-n+2}{2}\beta+2b\beta+a}^{a\frac{-n+2}{2}\beta+2b\beta+a
+\frac{na-4b}{2}\beta}
\frac{4\left(x-\left(a\frac{-n+2}{2}\beta+2b\beta+a+\frac{na-4b}{2}\beta\right)\right)^2}{a(na-4b)}
dx\biggr)\\
&=&\frac{a((n^2-6n+12)\beta^2+(-6n+24)\beta+12)+b((-8n+24)\beta^2
+24\beta)}{6((4-n)\beta+4)}. 
\end{eqnarray*}
Thus we have 
\begin{eqnarray*}
&&\frac{A_{X, \Delta}(F)}{S(F)}\\
&\geq&
\frac{6((4-n)\beta+4)((a/b)\beta+1)}{(a/b)((n^2-6n+12)\beta^2+(-6n+24)\beta+12)+
(-8n+24)\beta^2+24\beta}\\
&\geq&\frac{3((4-n)\beta+4)(2\beta+1)}{(n^2-10n+24)\beta^2+(-6n+36)\beta+12}
\end{eqnarray*}
since $a/b\in(1$, $2]$. Moreover, equality holds only if $a/b=2$. 

Conversely, assume that $(a$, $b)=(2$, $1)$, $j_C=k=2$ and $n\geq 2$. 
Obviously, $F$ is plt-type over $(X, \Delta)$. Moreover, since $-K_{\tilde{X}}$ is big, 
$F$ is dreamy over $(X, \Delta)$ by \cite{TVAV}. If $n=2$, then 
\[
\vol_Y(\sigma^*L-(2\beta+2)F)=0.
\]
Thus we have 
\[
\frac{A_{X, \Delta}(F)}{S(F)}=\frac{3(\beta+2)(2\beta+1)}{2(2\beta^2+6\beta+3)} 
\]
for the $F$. 

If $n\geq 3$, then 
$\phi_*\left(\sigma^*L-((-n+4)\beta+2)F\right)$ induces the birational 
contraction $\phi'\colon Y'\to Y''$ of $C^{Y'}$. Set 
$\nu'':=\phi'\circ\nu'\colon\tilde{X}\to Y''$. 
By Lemma \ref{mono_lem}, we have 
$\left(l^{Y\cdot 2}\right)=-2$, $\left(l^Y\cdot C^Y\right)=0$ 
and $\left(C^{Y\cdot 2}\right)=2-n$. 
Thus we have 
\begin{equation}\label{4_2_eqn}
\pi^*D-\frac{x}{2}F_m^*={\nu''}^*\nu''_*\left(\pi^*D-\frac{x}{2}F_m^*\right)
+\nu^*\left(\frac{x-2\beta}{2}l^Y+\frac{x-((4-n)\beta+2)}{n-2}C^Y\right).
\end{equation}
For any $x\in[(-n+4)\beta+2$, $2\beta+2]$, we have 
$\nu''_*\left(\pi^*D-(x/2)F_m^*\right)\geq 0$ and 
\begin{eqnarray*}
\left(\pi^*L-\frac{x}{2}F_m^*-\nu^*\left(\frac{x-2\beta}{2}l^Y
+\frac{x-((4-n)\beta+2)}{n-2}C^Y\right)\right)
\cdot F^{\tilde{X}}
=
\frac{2\beta+2-x}{n-2}.
\end{eqnarray*}
Thus, 
for any $x\in[(-n+4)\beta+2$, $2\beta+2]$, 
\eqref{4_2_eqn} gives the Zariski decomposition, hence
\begin{eqnarray*}
&&\vol_Y(\sigma^*L-x F)\\
&=&
\left(\pi^*L-\frac{x}{2}F_m^*-\nu^*\left(\frac{x-2\beta}{2}l^Y
+\frac{x-((4-n)\beta+2)}{n-2}C^Y\right)\right)^{\cdot 2}\\
&=&\frac{(x-2\beta-2)^2}{n-2}.
\end{eqnarray*}
This immediately implies that 
\[
\frac{A_{X, \Delta}(F)}{S(F)}=
\frac{3((4-n)\beta+4)(2\beta+1)}{(n^2-10n+24)\beta^2+(-6n+36)\beta+12}
\]
for the $F$. 

\underline{The case $a/b< 4/n$}\,\,
We have $a(\beta+1)<a(-n+2)\beta/2+2b\beta+a$ in this case. 
By Proposition \ref{pltK_prop}, for $x\geq a(\beta+1)$, 
\[
\vol_Y(\sigma^*L-x F)\leq \frac{4}{a(4b-na)}\left(x-\left(a(\beta+1)
+\frac{4b-na}{2}\beta\right)\right)^2.
\]
In particular, we have 
\begin{eqnarray*}
&&S(F)\\
&\leq&\frac{1}{(4-n)\beta^2+4\beta}\biggl(\int_0^{a(\beta+1)}
\left((4-n)\beta^2+4\beta-\frac{x^2}{ab}\right)dx\\
&+&\int_{2b\beta}^{a(\beta+1)}\frac{(x-2b\beta)^2}{ab}dx\\
&+&\int_{a(\beta+1)}^{a(\beta+1)
+\frac{4b-na}{2}\beta}
\frac{4\left(x-\left(a(\beta+1)+\frac{4b-na}{2}\beta\right)\right)^2}{a(4b-na)}
dx\biggr)\\
&=&\frac{a((n^2-6n+12)\beta^2+(-6n+24)\beta+12)+b((-8n+24)\beta^2
+24\beta)}{6((4-n)\beta+4)}. 
\end{eqnarray*}
Thus we have 
\begin{eqnarray*}
\frac{A_{X, \Delta}(F)}{S(F)}\geq\frac{3((4-n)\beta+4)(2\beta+1)}{(n^2-10n+24)\beta^2+(-6n+36)\beta+12}
\end{eqnarray*}
as in th case $a/b\geq 4/n$. Moreover, equality holds only if $a/b=2$ and $n=1$. 

Conversely, assume that $(a$, $b)=(2$, $1)$, $j_C=k=2$ and $n=1$. 
In this case, in the notation of Theorem \ref{S7_thm} Step 1, we can assume that 
$F$ is the toric valuation corresponds to the primitive lattice point $(-2$, $-1)\in N$. 
Obviously, $F$ is plt-type and dreamy over $(X, \Delta)$. By \cite[Corollary 7.7]{BJ}, 
\[
S(F)=\frac{2\cdot(4\beta^2+9\beta+6)+1\cdot(7\beta^2+12\beta)}{3(3\beta+4)}
=\frac{5\beta^2+10\beta+4}{3\beta+4}
\]
and hence 
\[
\frac{A_{X, \Delta}(F)}{S(F)}=\frac{(3\beta+4)(2\beta+1)}{5\beta^2+10\beta+4} 
\]
for the $F$. 
Thus we have completed the proof. 
\end{proof}

\section{Prime divisors centered at special points, II}\label{caseII_section}

In this section, we prove the following: 

\begin{thm}\label{caseII_thm}
Let $(X, C)$ be an asymptotically log del Pezzo surface of type 
$(\operatorname{I.9B.}n)$, let $\eta\colon X\to\pr^1$ be the anti-log-canonical 
morphism, and let $q_1$, $q_2\in C$ be the ramification points of 
$\eta|_C\colon C\to\pr^1$. Take any exceptional prime divisor $F$ over $X$ such that 
$c_X(F)=q_1$. Assume that $\eta^{-1}(\eta(q_1))$ is singular. Then, for any 
$\beta\in(0$, $1/(7n))\cap\Q$, we have 
\[
\frac{A_{X, \Delta}(F)}{S_L(F)}\geq 
\frac{3((4-n)\beta+4)(\beta+1)}{(n^2-9n+20)\beta^2
+(-6n+30)\beta+12},
\]
where $\Delta:=(1-\beta)C$ and $L:=-(K_X+\Delta)$. Moreover, the above 
inequality is optimal; there uniquely exists a prime divisor over $X$ satisfying the above 
conditions such that equality holds. 
\end{thm}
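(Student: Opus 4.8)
The plan is to follow the architecture of the proof of Theorem~\ref{caseI_thm}, replacing the smooth fibre by the singular one. First I record the local geometry: since $\eta^{-1}(\eta(q_1))$ is singular it is a sum $l_1+l_2$ of two $(-1)$-curves meeting transversally at one point, and since $q_1$ is a ramification point of the double cover $\eta|_C$, the relations $(C\cdot l_1)=(C\cdot l_2)=1$ and $(C\cdot(l_1+l_2))=2$ force $q_1=l_1\cap l_2$ and $C,l_1,l_2$ to be pairwise transverse there. The key contrast with Theorem~\ref{caseI_thm} --- where $C$ is \emph{tangent} to the smooth fibre at $q_1$ --- is that here the strict transforms of $C$, $l_1$, $l_2$ become mutually disjoint already after a single blow-up of $q_1$, which sharply restricts the relevant plt-type divisors.

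By Proposition~\ref{pltK_prop} we may assume $F$ is dreamy and plt-type over $(X,\Delta)$; adopting the notation of \S\ref{plt_section} and setting $j_C:=\max\{1\le i\le k : p_i\in C^{X_{i-1}}\}$, Lemma~\ref{mono_lem} gives $j_C\in\{1,k\}$ and Proposition~\ref{KpM_prop} gives $A_{X,\Delta}(F)=b\beta+a$ when $j_C=1$ and $A_{X,\Delta}(F)=a\beta+b$ when $j_C=k$. When $j_C=1$ and $k\ge 3$ one has $A_{X,\Delta}(F)/A_{X,0}(F)=(\beta+a/b)/(1+a/b)>(\beta+2)/3$ since $a/b>2$; choosing $\theta\colon X\to X'$ of type $(\operatorname{I.9B.}1)$ by contracting one component from each of the $n-1$ singular fibres other than $l_1+l_2$, so that $\theta$ is an isomorphism near $q_1$, and combining Proposition~\ref{easy_prop} with the bound $A_{X,0}(F)/S_{L'}(F)\ge 3(3\beta+4)/(7\beta^2+12\beta+6)$ from Theorem~\ref{S7_thm}~Step~1, we get $A_{X,\Delta}(F)/S_L(F)>((4-n)\beta+4)(\beta+2)/(7\beta^2+12\beta+6)$, which exceeds the target for $\beta\in(0,1/(7n))$.

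For $j_C=1$ with $k\le 2$ and for $j_C=k$ with arbitrary $k$ I run the same scheme as in Theorem~\ref{caseI_thm}: work with the $\Q$-divisor $D:=\beta C+l_1+l_2\sim_\Q L$ (and the variants $\beta C+l_i+S$ with $S$ a section of $\eta$, where convenient), expand $\pi^*D=D^{\tilde X}+\sum_i(\cdot)F_i^{\tilde X}$, and compute the Zariski decomposition of $\sigma^*L-xF$ chamber by chamber --- successively peeling off $l_1^Y$, then $l_2^Y$, then the $(-1)$-sections of $\eta$ meeting $l_1$ or $l_2$ (each having $(S\cdot C)=0$, hence $L\cdot S=1$), and finally $C^Y$ once $(C^Y)^2<0$ --- and bound $S_L(F)$ in each chamber via Proposition~\ref{pltK_prop}(3). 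This reduces the whole estimate to a rational inequality in the single parameter $a/b\in[1,2]$ (resp.\ $a/b\in[1,\infty)$ when $j_C=k$), several sub-cases being again reduced to $n=1$ by Proposition~\ref{easy_prop}.

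The extremal divisor is $F_1$, the blow-up of $q_1$ itself (the case $(a,b)=(1,1)$, $j_C=k=1$), for which $A_{X,\Delta}(F_1)=\beta+1$; carrying the Zariski-decomposition computation all the way through --- for $n\le 3$ the positive part becomes a perfect square after subtracting $l_1^{X_1},l_2^{X_1}$ and all $n$ sections of $\eta$ with $(S\cdot C)=0$, while for $n\ge 4$ it does so after subtracting $l_1^{X_1},l_2^{X_1},C^{X_1}$ (the sections then binding only at the pseudo-effective threshold) --- one obtains $S_L(F_1)=\frac{(n^2-9n+20)\beta^2+(-6n+30)\beta+12}{3((4-n)\beta+4)}$, so $A_{X,\Delta}(F_1)/S_L(F_1)$ equals the asserted value exactly, and uniqueness follows since all other cases give a strict inequality while within the case containing $(a,b)=(1,1)$ equality forces $a/b=1$. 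The main obstacle I expect is the volume of casework in the Zariski-decomposition step: the chamber structure of $\sigma^*L-xF$ depends simultaneously on $(a,b)$, on $j_C$, on $n$, and on which of $l_1,l_2,C$ and the sections of $\eta$ the movable part hits first, and each configuration must be organised into a clean rational inequality in $a/b$ and $n$ valid on $\beta\in(0,1/(7n))$; a secondary subtlety is that in the exact computation of $S_L(F_1)$ one must include all $n$ sections of $\eta$, since omitting them yields an irrational threshold and the wrong (larger) value.
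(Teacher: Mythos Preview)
Your plan matches the paper's proof closely: the reduction to plt-type divisors, the elimination of $j_C=1,\,k\ge3$ via Proposition~\ref{easy_prop} and the Step~1 bound of Theorem~\ref{S7_thm}, the Zariski-decomposition analysis of $\sigma^*L-xF$ based on $D=\beta C+l_1+l_2$, and the identification of the single blow-up $F_1$ (i.e.\ $(a,b)=(1,1)$) as the unique minimiser are all exactly as in the paper. Two points where the paper proceeds differently are worth recording. First, neither sections of $\eta$ nor further reductions to $n=1$ via Proposition~\ref{easy_prop} are used beyond Step~1: for the upper bound the paper peels off only $l_1^Y$ and $l_2^Y$ (simultaneously in the symmetric sub-case, successively when $k=2$ and $p_2\in l_1^{X_1}$ --- this asymmetric configuration is handled as its own step) and then invokes the concavity estimate of Proposition~\ref{pltK_prop}\eqref{pltK_prop31} rather than tracking further chambers. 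Second, for the exact value of $S_L(F_1)$ when $n\le 2$ the paper does not peel off your sections; it instead observes that $X$ can be realised as a toric surface with $F_1$ a toric valuation, and reads off $S_L(F_1)$ from the barycenter formula of \cite[Corollary~7.7]{BJ} (for $n=3$ one has $\vol(\beta+2)=0$ directly from the linear piece, and for $n\ge4$ one peels off $C^{Y'}$ to get $\vol(x)=(x-\beta-2)^2/(n-3)$). Your section-based computation should also work --- your observation that all $n$ curves $S$ with $(S\cdot C)=0$ reach $P(x)\cdot S=0$ simultaneously at $x=\beta+2$ is correct --- but the toric shortcut is cleaner and sidesteps precisely the extra chamber bookkeeping you flag as the main obstacle.
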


\begin{proof}
The following proof is divided into 4 numbers of steps. 

\noindent\underline{\textbf{Step 1}}\\
By Proposition \ref{pltK_prop}, we may assume that $F$ is dreamy and plt-type over 
$(X, \Delta)$. We follow the notations in \S \ref{plt_section}. Moreover, set 
\[
j_C:=\max\left\{1\leq i\leq k\,|\,p_i\in C^{X_{i-1}}\right\}.
\]
By Lemma \ref{mono_lem}, we have $j_C\in\{1$, $k\}$. 
From Proposition \ref{KpM_prop}, 
we have 
\[
A_{X, \Delta}(F)=\begin{cases}
b\beta+a & \text{if }j_C=1, \\
a\beta+b & \text{if }j_C=k.
\end{cases}\]

Assume that $j_C=1$ and $k\geq 3$. 
The completely same argument in Theorem \ref{caseI_thm} Step 1 shows that 
\begin{eqnarray*}
&&\frac{A_{X,\Delta}(F)}{S_{L}(F)}>\frac{((4-n)\beta+4)(\beta+2)}{7\beta^2+12\beta+6}\\
&>&\frac{3((4-n)\beta+4)(\beta+1)}{(n^2-9n+20)\beta^2
+(-6n+30)\beta+12}.
\end{eqnarray*}

\noindent\underline{\textbf{Step 2}}\\
Write $\eta^{-1}(\eta(q_1))=l_1+l_2$. We have $q_1=l_1\cap l_2$. Set  
\[
D:=\beta C+l_1+l_2\sim_\Q L. 
\]
Assume that $j_C=1$. Moreover, assume either 
\begin{itemize}
\item
$k=1$ (i.e., $m=1$), or 
\item
$k=2$ and $p_2\in F_1\setminus\left(l_1^{X_1}\cup l_2^{X_1}\right)$.
\end{itemize}
We can inductively check that 
\[
\pi^*D=D^{\tilde{X}}+\sum_{i=1}^m b_i(\beta+2)F_i^{\tilde{X}}. 
\]
By Proposition \ref{KpM_prop}, 
$\pi^*D-(x/(ab))F_m^*=\nu^*(\sigma^*D-x F)$ is effective for 
$x\in[0$, $b(\beta+2)]$. We have 
\begin{eqnarray*}
\left(\pi^*L-\frac{x}{ab}F_m^*\right)\cdot C^{\tilde{X}}&=&
(4-n)\beta+2-\frac{x}{a}, \\
\left(\pi^*L-\frac{x}{ab}F_m^*\right)\cdot l_i^{\tilde{X}}&=&
\beta-\frac{x}{a}\quad(i=1, 2), \\
\left(\pi^*L-\frac{x}{ab}F_m^*\right)\cdot F^{\tilde{X}}&=&
\frac{x}{ab}.
\end{eqnarray*}
Note that $a((4-n)\beta+2)>a\beta$ and $b(\beta+2)>a\beta$. Thus 
$\sigma^*L-x F$ is nef for $x\in[0$, $a\beta]$, and $\sigma^*L-a\beta F$ induces 
the birational contraction $\phi\colon Y\to Y'$ of $l_1^Y$ and $l_2^Y$. Set 
$\nu':=\phi\circ\nu\colon\tilde{X}\to Y'$. 
By Lemma \ref{mono_lem}, we have $\left(l_1^{Y\cdot 2}\right)=-(a+b)/a$, 
$\left(l_1^Y\cdot l_2^Y\right)=(a-b)/a$ and $\left(l_2^{Y\cdot 2}\right)=-(a+b)/a$. 
Thus we have 
\begin{equation}\label{II-1_eqn}
\pi^*D-\frac{x}{ab}F_m^*={\nu'}^*\nu'_*\left(\pi^*D-\frac{x}{ab}F_m^*\right)
+\frac{x-a\beta}{2b}\nu^*\left(l_1^Y+l_2^Y\right).
\end{equation}
For any $x\in[a\beta$, $b(\beta+2)]$, we have 
$\nu'_*\left(\pi^*D-(x/(ab))F_m^*\right)\geq 0$ and 
\begin{eqnarray*}
\left(\pi^*L-\frac{x}{ab}F_m^*-\frac{x-a\beta}{2b}\nu^*\left(l_1^Y+l_2^Y\right)
\right)\cdot C^{\tilde{X}}&=&
\frac{a\beta+(-n+3)b\beta+2b-x}{b}, \\
\left(\pi^*L-\frac{x}{ab}F_m^*-\frac{x-a\beta}{2b}\nu^*\left(l_1^Y+l_2^Y\right)
\right)\cdot F^{\tilde{X}}&=&
\frac{\beta}{b}.
\end{eqnarray*}
Note that the condition $a\beta+(-n+3)b\beta+2b\leq b(\beta+2)$ is equivalent 
to the condition $a/b\leq n-2$. This occurs only when $n\geq 3$ since $a/b\geq 1$. 
For any $x\in[a\beta$, $\min\{b(\beta+2)$, $a\beta+(-n+3)b\beta+2b\}]$, 
\eqref{II-1_eqn} gives the Zariski decomposition, hence
\begin{eqnarray*}
\vol_Y(\sigma^*L-x F)&=&
\left(\pi^*L-\frac{x}{ab}F_m^*-\frac{x-a\beta}{2b}\nu^*\left(l_1^Y+l_2^Y\right)
\right)^{\cdot 2}\\
&=&(4-n)\beta^2+4\beta-\frac{x^2}{ab}+\frac{(x-a\beta)^2}{ab}.
\end{eqnarray*}

\underline{The case $a/b\leq n-2$}\,\,
By Proposition \ref{pltK_prop}, for $x\geq a\beta+(-n+3)b\beta+2b$, 
\begin{eqnarray*}
&&\vol_Y(\sigma^*L-x F)\\
&\leq& \frac{1}{b(-a+(n-2)b)}\left(x-\left(a\beta+(-n+3)b\beta+2b
+(-a+(n-2)b)\beta\right)\right)^2.
\end{eqnarray*}
In particular, we have 
\begin{eqnarray*}
&&S(F)\\
&\leq&\frac{1}{(4-n)\beta^2+4\beta}\biggl(\int_0^{a\beta+(-n+3)b\beta+2b}
\left((4-n)\beta^2
+4\beta-\frac{x^2}{ab}\right)dx\\
&+&\int_{a\beta}^{a\beta+(-n+3)b\beta+2b}
\frac{(x-a\beta)^2}{ab}dx\\
&+&\int_{a\beta+(-n+3)b\beta+2b}^{a\beta+(-n+3)b\beta+2b+(-a+(n-2)b)\beta}
\frac{\left(x-\left(a\beta+(-n+3)b\beta+2b
+(-a+(n-2)b)\beta\right)\right)^2}{b(-a+(n-2)b)}dx\biggr)\\
&=&\frac{a((-2n+7)\beta^2+6\beta)+b((n^2-7n+13)\beta^2+(-6n+24)
\beta+12)}{3((4-n)\beta+4)}. 
\end{eqnarray*}
Thus we have 
\begin{eqnarray*}
\frac{A_{X, \Delta}(F)}{S(F)}&\geq&
\frac{3((4-n)\beta+4)(\beta+(a/b))}{(a/b)((-2n+7)\beta^2+6\beta)
+(n^2-7n+13)\beta^2+(-6n+24)\beta+12}\\
&\geq&
\frac{3((4-n)\beta+4)(\beta+1)}{(n^2-9n+20)\beta^2+(-6n+30)\beta+12}
\end{eqnarray*}
since $a/b\in[1$, $2]$. 
Moreover, equality holds only if $a/b=1$ and $n\geq 3$. 

Conversely, assume that $(a$, $b)=(1$, $1)$ and $n\geq 3$. 
Obviously, $F$ is plt-type and dreamy over $(X, \Delta)$. If $n=3$, then 
\[
\vol_Y(\sigma^*L-(\beta+2)F)=0.
\]
Thus we have 
\[
\frac{A_{X, \Delta}(F)}{S(F)}=\frac{3(\beta+4)(\beta+1)}{2(\beta^2+6\beta+6)} 
\]
for the $F$. 

If $n\geq 4$, then 
$\phi_*\left(\sigma^*L-((-n+4)\beta+2)F\right)$ induces the birational 
contraction $\phi'\colon Y'\to Y''$ of $C^{Y'}$. Set 
$\nu'':=\phi'\circ\nu'\colon\tilde{X}\to Y''$. 
By Lemma \ref{mono_lem}, we have 
$\left(l_i^{Y\cdot 2}\right)=-2$, $\left(l_i^Y\cdot C^Y\right)=0$ $(i=1$, $2)$, 
$\left(l_1^Y\cdot l_2^Y\right)=0$ and $\left(C^{Y\cdot 2}\right)=3-n$. 
Thus we have 
\begin{equation}\label{II-2_eqn}
\pi^*D-x F_m^*={\nu''}^*\nu''_*\left(\pi^*D-x F_m^*\right)
+\nu^*\left(\frac{x-((4-n)\beta+2)}{n-3}C^Y+\frac{x-\beta}{2}l_1^Y
+\frac{x-\beta}{2}l_2^Y\right).
\end{equation}
For any $x\in[(-n+4)\beta+2$, $\beta+2]$, we have 
$\nu''_*\left(\pi^*D-x F_m^*\right)\geq 0$ and 
\begin{eqnarray*}
\left(\pi^*L-x F_m^*-\nu^*\left(\frac{x-((4-n)\beta+2)}{n-3}C^Y
+\frac{x-\beta}{2}l_1^Y
+\frac{x-\beta}{2}l_2^Y\right)\right)
\cdot F^{\tilde{X}}
=
\frac{\beta+2-x}{n-3}.
\end{eqnarray*}
Thus, 
for any $x\in[(-n+4)\beta+2$, $\beta+2]$, 
\eqref{II-2_eqn} gives the Zariski decomposition, hence
\begin{eqnarray*}
&&\vol_Y(\sigma^*L-x F)\\
&=&
\left(\pi^*L-x F_m^*-\nu^*\left(\frac{x-((4-n)\beta+2)}{n-3}C^Y
+\frac{x-\beta}{2}l_1^Y
+\frac{x-\beta}{2}l_2^Y\right)\right)^{\cdot 2}\\
&=&\frac{(x-\beta-2)^2}{n-3}.
\end{eqnarray*}
This immediately implies that 
\[
\frac{A_{X, \Delta}(F)}{S(F)}=
\frac{3((4-n)\beta+4)(\beta+1)}{(n^2-9n+20)\beta^2+(-6n+30)\beta+12}
\]
for the $F$. 

\underline{The case $a/b> n-2$}\,\,
By Proposition \ref{pltK_prop}, for $x\geq b(\beta+2)$, 
\begin{eqnarray*}
&&\vol_Y(\sigma^*L-x F)\\
&\leq& \frac{1}{b(a+(-n+2)b)}\left(x-\left(
b(\beta+2)
+(a+(-n+2)b)\beta\right)\right)^2.
\end{eqnarray*}
In particular, we have 
\begin{eqnarray*}
&&S(F)\\
&\leq&\frac{1}{(4-n)\beta^2+4\beta}\biggl(\int_0^{b(\beta+2)}
\left((4-n)\beta^2+4\beta-\frac{x^2}{ab}\right)dx\\
&+&\int_{a\beta}^{b(\beta+2)}\frac{(x-a\beta)^2}{ab}dx\\
&+&\int_{b(\beta+2)}^{b(\beta+2)
+(a+(-n+2)b)\beta}
\frac{\left(x-\left(b(\beta+2)+(a+(-n+2)b)\beta\right)\right)^2}{b(a+(-n+2)b)}
dx\biggr)\\
&=&\frac{a((-2n+7)\beta^2+6\beta)+b((n^2-7n+13)\beta^2
+(-6n+24)\beta+12)}{3((4-n)\beta+4)}. 
\end{eqnarray*}
Thus we have 
\begin{eqnarray*}
\frac{A_{X, \Delta}(F)}{S(F)}
\geq
\frac{3((4-n)\beta+4)(\beta+1)}{(n^2-9n+20)\beta^2+(-6n+30)\beta+12}
\end{eqnarray*}
as in the case $a/b\leq n-2$. 
Moreover, equality holds only if $a/b=1$ and $n\in\{1$, $2\}$. 

Conversely, assume that $(a$, $b)=(1$, $1)$ and $n\in\{1$, $2\}$. 
Obviously, $F$ is plt-type and dreamy over $(X, \Delta)$. 
We consider the case $n=1$. In this case, 
in the notation of Theorem \ref{S7_thm} Step 1, we can assume that 
$F$ is the toric valuation corresponds to the primitive lattice point $(2$, $1)\in N$. 
By \cite[Corollary 7.7]{BJ}, 
\[
S(F)=-\frac{2\cdot(4\beta^2+9\beta+6)+1\cdot(7\beta^2+12\beta)}{3(3\beta+4)}
+(3\beta+2)
=\frac{4(\beta+1)^2}{3\beta+4}
\]
and hence 
\[
\frac{A_{X, \Delta}(F)}{S(F)}=\frac{3\beta+4}{4(\beta+1)} 
\]
for the $F$. 

We consider the case $n=2$. In this case, $X$ is the toric variety corresponds to 
the complete fan in $N_\R$ whose set of $1$-dimensional cones is equal to the set 
\[
\{\R_{\geq 0}(1,0),\,\,\R_{\geq 0}(1,1),\,\,\R_{\geq 0}(0,1),\,\,\R_{\geq 0}(-1,0),\,\,
\R_{\geq 0}(-1,-1),\,\,\R_{\geq 0}(0,-1)\}.
\]
Let $e_1$, $f_3$, $e_2$, $f_1$, $e_3$, $f_2$ be the torus invariant prime divisor 
corresponds to 
$\R_{\geq 0}(1,0)$, $\R_{\geq 0}(1,1)$, $\R_{\geq 0}(0,1)$, $\R_{\geq 0}(-1,0)$, 
$\R_{\geq 0}(-1,-1)$, $\R_{\geq 0}(0,-1)$, respectively. In this setting, we may assume 
that $C\sim f_2+e_1+f_3+e_2$ and $L$ is $\Q$-linearly equivalent to 
\[
D_0:=\beta f_2+(\beta+1)e_1+(\beta+1)f_3+\beta e_2, 
\]
and $F$ corresponds to the lattice point $(2$, $1)\in N$. The barycenter 
of the polytope in $M_\R$ associates with $D_0$ is equal to 
$(-(\beta+1)/2$, $0)$. By \cite[Corollary 7.7]{BJ}, we have 
$S(F)=-(\beta+1)+2(\beta+1)=\beta+1$. Thus we have 
\[
\frac{A_{X, \Delta}(F)}{S(F)}=1
\]
for the $F$. 

\noindent\underline{\textbf{Step 3}}\\
Assume that $j_C=1$, $k=2$ and $p_2\in l_1^{X_1}$. 
We can inductively check that 
\[
\pi^*D=D^{\tilde{X}}+\sum_{i=1}^m (b_i\beta+a_i+b_i)F_i^{\tilde{X}}. 
\]
By Proposition \ref{KpM_prop}, 
$\pi^*D-(x/(ab))F_m^*=\nu^*(\sigma^*D-x F)$ is effective for 
$x\in[0$, $b\beta+a+b]$. We have 
\begin{eqnarray*}
\left(\pi^*L-\frac{x}{ab}F_m^*\right)\cdot C^{\tilde{X}}&=&
(4-n)\beta+2-\frac{x}{a}, \\
\left(\pi^*L-\frac{x}{ab}F_m^*\right)\cdot l_1^{\tilde{X}}&=&
\beta-\frac{x}{b}, \\
\left(\pi^*L-\frac{x}{ab}F_m^*\right)\cdot l_2^{\tilde{X}}&=&
\beta-\frac{x}{a}, \\
\left(\pi^*L-\frac{x}{ab}F_m^*\right)\cdot F^{\tilde{X}}&=&
\frac{x}{ab}.
\end{eqnarray*}
Note that $b\beta<a((4-n)\beta+2)$. Thus 
$\sigma^*L-x F$ is nef for $x\in[0$, $b\beta]$, and $\sigma^*L-b\beta F$ induces 
the birational contraction $\phi\colon Y\to Y'$ of $l_1^Y$. Set 
$\nu':=\phi\circ\nu\colon\tilde{X}\to Y'$. 
By Lemma \ref{mono_lem}, we have $\left(l_1^{Y\cdot 2}\right)=-(a+b)/b$. 
Thus we have 
\begin{equation}\label{III-1_eqn}
\pi^*D-\frac{x}{ab}F_m^*={\nu'}^*\nu'_*\left(\pi^*D-\frac{x}{ab}F_m^*\right)
+\frac{x-b\beta}{a+b}\nu^*l_1^Y.
\end{equation}
For any $x\in[b\beta$, $b\beta+a+b]$, we have 
$\nu'_*\left(\pi^*D-(x/(ab))F_m^*\right)\geq 0$ and 
\begin{eqnarray*}
\left(\pi^*L-\frac{x}{ab}F_m^*-\frac{x-b\beta}{a+b}\nu^*l_1^Y
\right)\cdot C^{\tilde{X}}&=&
\frac{a((4-n)\beta+2)-x}{a}, \\
\left(\pi^*L-\frac{x}{ab}F_m^*-\frac{x-b\beta}{a+b}\nu^*l_1^Y
\right)\cdot l_2^{\tilde{X}}&=&
\frac{a\beta-x}{a}, \\
\left(\pi^*L-\frac{x}{ab}F_m^*-\frac{x-b\beta}{a+b}\nu^*l_1^Y
\right)\cdot F^{\tilde{X}}&=&
\frac{x+a\beta}{a(a+b)}.
\end{eqnarray*}
Note that the $a\beta<b\beta+a+b$ and $a\beta<a((4-n)\beta+2)$. 
For any $x\in[b\beta$, $a\beta]$, 
\eqref{III-1_eqn} gives the Zariski decomposition, hence
\begin{eqnarray*}
\vol_Y(\sigma^*L-x F)&=&
\left(\pi^*L-\frac{x}{ab}F_m^*-\frac{x-b\beta}{a+b}\nu^*l_1^Y
\right)^{\cdot 2}\\
&=&(4-n)\beta^2+4\beta-\frac{x^2}{ab}+\frac{(x-b\beta)^2}{b(a+b)}.
\end{eqnarray*}
Moreover, 
$\phi_*\left(\sigma^*L-a\beta F\right)$ induces the birational 
contraction $\phi'\colon Y'\to Y''$ of $l_2^{Y'}$. Set 
$\nu'':=\phi'\circ\nu'\colon\tilde{X}\to Y''$. 
By Lemma \ref{mono_lem}, we have 
$\left(l_1^{Y\cdot 2}\right)=-(a+b)/b$, $\left(l_1^Y\cdot l_2^Y\right)=0$ 
and $\left(l_2^{Y\cdot 2}\right)=-(a+b)/a$. 
Thus we have 
\begin{equation}\label{III-2_eqn}
\pi^*D-\frac{x}{ab}F_m^*={\nu''}^*\nu''_*\left(\pi^*D-\frac{x}{ab}F_m^*\right)
+\nu^*\left(\frac{x-b\beta}{a+b}l_1^Y+\frac{x-a\beta}{a+b}l_2^Y\right).
\end{equation}
For any $x\in[a\beta$, $b\beta+a+b]$, we have 
$\nu''_*\left(\pi^*D-(x/(ab))F_m^*\right)\geq 0$ and 
\begin{eqnarray*}
\left(\pi^*L-\frac{x}{ab}F_m^*-\nu^*\left(\frac{x-b\beta}{a+b}l_1^Y
+\frac{x-a\beta}{a+b}l_2^Y\right)\right)
\cdot C^{\tilde{X}}
&=&
\frac{2\left(\left(\frac{-n+5}{2}a+\frac{-n+3}{2}b\right)\beta+a+b-x\right)}{a+b}, \\
\left(\pi^*L-\frac{x}{ab}F_m^*-\nu^*\left(\frac{x-b\beta}{a+b}l_1^Y
+\frac{x-a\beta}{a+b}l_2^Y\right)\right)
\cdot F^{\tilde{X}}
&=&
\frac{2\beta}{a+b}.
\end{eqnarray*}
Note that the condition 
\[
\left(\frac{-n+5}{2}a+\frac{-n+3}{2}b\right)\beta+a+b\leq  b\beta+a+b
\]
is equivalent to the condition $(5-n)a\leq (n-1)b$. Since 
$a/b\in(1$, $2]$, this condition is equivalent to the condition $n\geq 4$. 
For any $x\in[a\beta$, $\min\{((-n+5)a/2+(-n+3)b/2)\beta+a+b$, $b\beta+a+b\}]$, 
\eqref{III-2_eqn} gives the Zariski decomposition, hence
\begin{eqnarray*}
&&\vol_Y(\sigma^*L-x F)\\
&=&
\left(\pi^*L-\frac{x}{ab}F_m^*-\nu^*\left(\frac{x-b\beta}{a+b}l_1^Y
+\frac{x-a\beta}{a+b}l_2^Y\right)\right)^{\cdot 2}\\
&=&(4-n)\beta^2+4\beta-\frac{x^2}{ab}+\frac{(x-b\beta)^2}{b(a+b)}
+\frac{(x-a\beta)^2}{a(a+b)}.
\end{eqnarray*}

\underline{The case $n\geq 4$}\,\,
By Proposition \ref{pltK_prop}, for $x\geq ((-n+5)a/2+(-n+3)b/2)\beta+a+b$, 
\[
\vol_Y(\sigma^*L-x F)\leq 
\frac{4\left(x-\left(\left(\frac{-n+5}{2}a+\frac{-n+3}{2}b\right)\beta+a+b
+\frac{(n-5)a+(n-1)b}{2}\beta\right)\right)^2}{(a+b)\left((n-5)a+(n-1)b\right)}.
\]
In particular, we have 
\begin{eqnarray*}
&&S(F)\\
&\leq&\frac{1}{(4-n)\beta^2+4\beta}\biggl(
\int_0^{\left(\frac{-n+5}{2}a+\frac{-n+3}{2}b\right)\beta+a+b}\left((4-n)\beta^2
+4\beta-\frac{x^2}{ab}\right)dx\\
&&+\int_{b\beta}^{\left(\frac{-n+5}{2}a+\frac{-n+3}{2}b\right)\beta+a+b}
\frac{(x-b\beta)^2}{b(a+b)}dx+
\int_{a\beta}^{\left(\frac{-n+5}{2}a+\frac{-n+3}{2}b\right)\beta+a+b}
\frac{(x-a\beta)^2}{a(a+b)}dx\\
&&+\int_{\left(\frac{-n+5}{2}a+\frac{-n+3}{2}b\right)\beta+a+b}^{\left(\frac{-n+5}{2}a+\frac{-n+3}{2}b\right)\beta+a+b+\frac{((n-5)a+(n-1)b)\beta}{2}}
\frac{4}{(a+b)\left((n-5)a+(n-1)b\right)}\\
&&\quad\cdot \left(x-\left(\left(\frac{-n+5}{2}a+\frac{-n+3}{2}b\right)\beta+a+b
+\frac{((n-5)a+(n-1)b)\beta}{2}\right)\right)^2dx\biggr)\\
&=&\frac{a((n^2-10n+23)\beta^2+(-6n+30)\beta+12)
+b((n^2-8n+17)\beta^2+(-6n+30)\beta+12)}{6((4-n)\beta+4)}. 
\end{eqnarray*}
Thus we have 
\begin{eqnarray*}
&&\frac{A_{X, \Delta}(F)}{S(F)}\\
&\geq&
\frac{6((4-n)\beta+4)(\beta+(a/b))}{(a/b)
((n^2-10n+23)\beta^2+(-6n+30)\beta+12)+(n^2-8n+17)\beta^2+(-6n+30)\beta+12}\\
&>&\frac{3((4-n)\beta+4)(\beta+1)}{(n^2-9n+20)\beta^2+(-6n+30)\beta+12}
\end{eqnarray*}
since $a/b\in(1$, $2]$. 

\underline{The case $n\leq 3$}\,\,
By Proposition \ref{pltK_prop}, for $x\geq b\beta+a+b$, 
\[
\vol_Y(\sigma^*L-x F)\leq 
\frac{4\left(x-\left(b\beta+a+b
+\frac{(-n+5)a+(-n+1)b}{2}\beta\right)\right)^2}{(a+b)\left((-n+5)a+(-n+1)b\right)}.
\]
In particular, we have 
\begin{eqnarray*}
&&S(F)\\
&\leq&\frac{1}{(4-n)\beta^2+4\beta}\biggl(
\int_0^{b\beta+a+b}\left((4-n)\beta^2
+4\beta-\frac{x^2}{ab}\right)dx\\
&&+\int_{b\beta}^{b\beta+a+b}
\frac{(x-b\beta)^2}{b(a+b)}dx+
\int_{a\beta}^{b\beta+a+b}
\frac{(x-a\beta)^2}{a(a+b)}dx\\
&&+\int_{b\beta+a+b}^{b\beta+a+b+\frac{(-n+5)a+(-n+1)b}{2}\beta}
\frac{4\left(x-\left(b\beta+a+b
+\frac{(-n+5)a+(-n+1)b}{2}\beta\right)\right)^2}{(a+b)\left((-n+5)a+(-n+1)b\right)}
dx\biggr)\\
&=&\frac{a((n^2-10n+23)\beta^2+(-6n+30)\beta+12)
+b((n^2-8n+17)\beta^2+(-6n+30)\beta+12)}{6((4-n)\beta+4)}. 
\end{eqnarray*}
Thus, as in the case $n\geq 4$, we have 
\begin{eqnarray*}
\frac{A_{X, \Delta}(F)}{S(F)}>
\frac{3((4-n)\beta+4)(\beta+1)}{(n^2-9n+20)\beta^2+(-6n+30)\beta+12}.
\end{eqnarray*}

\noindent\underline{\textbf{Step 4}}\\
Assume that $j_C=k\geq 2$. 
We can inductively check that 
\[
\pi^*D=D^{\tilde{X}}+\sum_{i=1}^m (a_i\beta+2b_i)F_i^{\tilde{X}}. 
\]
By Proposition \ref{KpM_prop}, 
$\pi^*D-(x/(ab))F_m^*=\nu^*(\sigma^*D-x F)$ is effective for 
$x\in[0$, $a\beta+2b]$. We have 
\begin{eqnarray*}
\left(\pi^*L-\frac{x}{ab}F_m^*\right)\cdot C^{\tilde{X}}&=&
(4-n)\beta+2-\frac{x}{b}, \\
\left(\pi^*L-\frac{x}{ab}F_m^*\right)\cdot l_i^{\tilde{X}}&=&
\frac{a\beta-x}{a} \quad (i=1,2), \\
\left(\pi^*L-\frac{x}{ab}F_m^*\right)\cdot F^{\tilde{X}}&=&
\frac{x}{ab}.
\end{eqnarray*}

\underline{The case $a/b\geq ((4-n)\beta+2)/\beta$}\,\,
We have $b((4-n)\beta+2)\leq a\beta$ in this case. 
Thus 
$\sigma^*L-x F$ is nef for $x\in[0$, $b((4-n)\beta+2)]$. By Proposition \ref{pltK_prop}, 
\[
S(F)\leq\frac{a((4-n)\beta^2+4\beta)+b((4-n)\beta+2)^2}{3((4-n)\beta+2)}. 
\]
Thus we have 
\begin{eqnarray*}
\frac{A_{X, \Delta}(F)}{S(F)}&\geq&
\frac{3((4-n)\beta+2)((a/b)\beta+1)}{(a/b)((4-n)\beta^2+4\beta)
+((4-n)\beta+2)^2}.
\end{eqnarray*}
Note that $a/b\in[((4-n)\beta+2)/\beta$, $\infty)$. 
If $n\leq 4$, then 
\[
\frac{A_{X, \Delta}(F)}{S(F)}\geq\frac{3}{2}; 
\]
if $n\geq 5$, then 
\begin{eqnarray*}
&&\frac{A_{X, \Delta}(F)}{S(F)}>\frac{3((4-n)\beta+2)}{(4-n)\beta+4}\\
&>&\frac{3((4-n)\beta+4)(\beta+1)}{(n^2-9n+20)\beta^2
+(-6n+30)\beta+12}.
\end{eqnarray*}

\underline{The case $a/b<((4-n)\beta+2)/\beta$}\,\,
We have $a\beta<b((4-n)\beta+2)$ in this case. 
Thus 
$\sigma^*L-x F$ is nef for $x\in[0$, $a\beta]$, 
and $\sigma^*L-a\beta F$ induces 
the birational contraction $\phi\colon Y\to Y'$ of $l_1^Y$ and $l_2^Y$. Set 
$\nu':=\phi\circ\nu\colon\tilde{X}\to Y'$. 
By Lemma \ref{mono_lem}, we have $\left(l_1^{Y\cdot 2}\right)=-(a+b)/a$, 
$\left(l_1^Y\cdot l_2^Y\right)=(a-b)/a$ and $\left(l_2^{Y\cdot 2}\right)=-(a+b)/a$. 
Thus we have 
\begin{equation}\label{IV-1_eqn}
\pi^*D-\frac{x}{ab}F_m^*={\nu'}^*\nu'_*\left(\pi^*D-\frac{x}{ab}F_m^*\right)
+\frac{x-a\beta}{2b}\nu^*\left(l_1^Y+l_2^Y\right).
\end{equation}
For any $x\in[a\beta$, $a\beta+2b]$, we have 
$\nu'_*\left(\pi^*D-(x/(ab))F_m^*\right)\geq 0$ and 
\begin{eqnarray*}
\left(\pi^*L-\frac{x}{ab}F_m^*-\frac{x-a\beta}{2b}\nu^*\left(l_1^Y+l_2^Y\right)
\right)\cdot C^{\tilde{X}}&=&
\frac{b((4-n)\beta+2)-x}{b}, \\
\left(\pi^*L-\frac{x}{ab}F_m^*-\frac{x-a\beta}{2b}\nu^*\left(l_1^Y+l_2^Y\right)
\right)\cdot F^{\tilde{X}}&=&
\frac{\beta}{b}.
\end{eqnarray*}
Note that the condition $a\beta+2b<b((4-n)\beta+2)$ is equivalent to the condition 
$a/b<4-n$. 
For any $x\in[a\beta$, $\min\{a\beta+2b$, $b((4-n)\beta+2)\}]$, 
\eqref{IV-1_eqn} gives the Zariski decomposition, hence
\begin{eqnarray*}
\vol_Y(\sigma^*L-x F)&=&\left(\pi^*L-\frac{x}{ab}F_m^*
-\frac{x-a\beta}{2b}\nu^*(l_1^Y+l_2^Y)
\right)^{\cdot 2}\\
&=&(4-n)\beta^2+4\beta-\frac{x^2}{ab}+\frac{(x-a\beta)^2}{ab}.
\end{eqnarray*}

Assume that $a/b\geq 4-n$. 
By Proposition \ref{pltK_prop}, for $x\geq b((4-n)\beta+2)$, 
\[
\vol_Y(\sigma^*L-x F)\leq \frac{1}{b(a+(n-4)b)}\left(x-\left(b((4-n)\beta+2)
+(a+(n-4)b)\beta\right)\right)^2.
\]
In particular, we have 
\begin{eqnarray*}
&&S(F)\\
&\leq&\frac{1}{(4-n)\beta^2+4\beta}\biggl(\int_0^{b((4-n)\beta+2)}
\left((4-n)\beta^2+4\beta-\frac{x^2}{ab}\right)dx\\
&+&\int_{a\beta}^{b((4-n)\beta+2)}\frac{(x-a\beta)^2}{ab}dx\\
&+&\int_{b((4-n)\beta+2)}^{b((4-n)\beta+2)+(a+(n-4)b)\beta}
\frac{\left(x-\left(b((4-n)\beta+2)+(a+(n-4)b)\beta\right)\right)^2}{b(a+(n-4)b)}
dx\biggr)\\
&=&\frac{a((-n+4)\beta^2+6\beta)+b((n^2-8n+16)\beta^2+(-6n+24)\beta
+12)}{3((4-n)\beta+4)}. 
\end{eqnarray*}
Thus we have 
\begin{eqnarray*}
\frac{A_{X, \Delta}(F)}{S(F)}
&\geq&\frac{3((4-n)\beta+4)((a/b)\beta+1)}{(a/b)((-n+4)\beta^2+6\beta)
+(n^2-8n+16)\beta^2+(-6n+24)\beta+12}\\
&>&\frac{3((4-n)\beta+4)(\beta+1)}{(n^2-9n+20)\beta^2+(-6n+30)\beta+12}
\end{eqnarray*}
since $a/b\in(1$, $((4-n)\beta+2)/\beta)$. 

Assume that $a/b<4-n$. 
By Proposition \ref{pltK_prop}, for $x\geq a\beta+2b$, 
\[
\vol_Y(\sigma^*L-x F)\leq \frac{1}{b(-a+(4-n)b)}\left(x-\left(a\beta+2b
+(-a+(4-n)b)\beta\right)\right)^2.
\]
In particular, we have 
\begin{eqnarray*}
S(F)&\leq&\frac{1}{(4-n)\beta^2+4\beta}\biggl(\int_0^{a\beta+2b}
\left((4-n)\beta^2+4\beta-\frac{x^2}{ab}\right)dx
+\int_{a\beta}^{a\beta+2b}\frac{(x-a\beta)^2}{ab}dx\\
&&+\int_{a\beta+2b}^{a\beta+2b+(-a+(4-n)b)\beta}
\frac{\left(x-\left(a\beta+2b+(-a+(4-n)b)\beta\right)\right)^2}{b(-a+(4-n)b)}
dx\biggr)\\
&=&\frac{a((-n+4)\beta^2+6\beta)+b((n^2-8n+16)\beta^2+(-6n+24)\beta
+12)}{3((4-n)\beta+4)}. 
\end{eqnarray*}
Thus, as in the case $(4-n)b\leq a$, we have 
\begin{eqnarray*}
\frac{A_{X, \Delta}(F)}{S(F)}
>\frac{3((4-n)\beta+4)(\beta+1)}{(n^2-9n+20)\beta^2+(-6n+30)\beta+12}.
\end{eqnarray*}
Thus we have completed the proof. 
\end{proof}

\section{Proof of Theorem \ref{mainthm}}\label{proof_thm}

\begin{proof}[Proof of Theorem \ref{mainthm}]
Take any prime divisor $F$ over $X$. Assume that $c_X(F)\not\in\{q_1$, $q_2\}$. 
Take a birational morphism $\theta\colon X\to X'$ over $\pr^1$ as in 
Lemma \ref{easy_lem} such that $\theta$ is an isomorphism at the generic point of 
$c_X(F)$ and $(X', \theta_*C)$ is of type $(\operatorname{I.9B.}1)$. 
By Proposition \ref{easy_prop}, we have 
\[
\frac{A_{X, \Delta}(F)}{S_L(F)}\geq 
\frac{(4-n)\beta+4}{3\beta+4}\cdot\frac{A_{X', \Delta'}(F)}{S_{L'}(F)},
\]
where $\Delta':=\theta_*\Delta$ and $L':=-(K_{X'}+\Delta')$. 
By Theorem \ref{S7_thm}, we have 
\[
\frac{A_{X', \Delta'}(F)}{S_{L'}(F)}\geq \frac{6}{5}.
\]
Therefore we have 
\[
\frac{A_{X, \Delta}(F)}{S_L(F)}\geq\frac{6((4-n)\beta+4)}{5(3\beta+4)}
>\frac{3((4-n)\beta+4)(2\beta+1)}{(n^2-10n+24)\beta^2
+(-6n+36)\beta+12}.
\]
Thus we can reduced to the case $F$ is exceptional over $X$ with 
$c_X(F)\in\{q_1$, $q_2\}$. Thus we have completed the proof of Theorem \ref{mainthm} 
by Theorems \ref{caseI_thm} and \ref{caseII_thm}. 
\end{proof}

\begin{remark}\label{61_rmk}
Assume that $n=2$ and both $q_1$ and $q_2$ lie on singular fibers of $\eta$. We may 
assume that $X$ is the blowup of $\pr^1_{z_{10}:z_{11}}\times\pr^1_{z_{20}:z_{21}}$ along 
$(0:1;0:1)$ and $(1:0;1:0)$, and $C$ is the strict transform of 
\[
C':=\{z_{10}z_{21}^2=z_{11}z_{20}^2\}\subset\pr^1_{z_{10}:z_{11}}\times\pr^1_{z_{20}:z_{21}}.
\]
Let us consider the one-parameter subgroup 
\begin{eqnarray*}
\rho\colon\G_m&\to&\PGL(2)\times\PGL(2)\subset\Aut(\pr^1\times\pr^1)\\
t&\mapsto&\diag\left((1,t^{-2}), (1, t^{-1})\right).
\end{eqnarray*}
Since $\rho$ fixes the centers of the blowups and $C'$, $\rho$ is a one-parameter 
subgroup of $(X, (1-\beta)C)$. As in \cite[\S 3]{hyperplane}, $\rho$ corresponds to the 
quasi-monomial valuation on 
\[
\A^2_{x_1,x_2}=\pr^1_{z_{10}:z_{11}}\times\pr^1_{z_{20}:z_{21}}\setminus(z_{10}z_{20}=0)
\]
for coordinates $(x_1=z_{11}/z_{10}$, $x_2=z_{21}/z_{20})$ with weights $(2$, $1)$. 
The valuation corresponds to the prime divisor $F$ over $X$ with 
$A_{X, \Delta}(F)/S(F)=1$ in Theorem \ref{caseII_thm} Step 2. This implies the 
K-polystability of $(X, (1-\beta)C)$. 
\end{remark}

\begin{remark}\label{62_rmk}
Assume that $n=2$, $q_1$ lies on a singular fiber of $\eta$ and $q_2$ does not lie on a 
singular fiber of $\eta$. Since $\Aut(X, (1-\beta)C)$ is discrete 
(see \cite[Remark 2.11]{FLSZZ}) and there exists a dreamy prime divisor $F$ over 
$(X, (1-\beta)C)$ such that $A_{X, (1-\beta)C}(F)/S(F)=1$, we can conclude that 
$(X, (1-\beta)C)$ is not K-polystable but K-semistable. 
\end{remark}


\begin{thebibliography}{99}

\bibitem[Ber16]{B}
R.\ Berman, \emph{K-polystability of Q-Fano varieties admitting 
K\"ahler-Einstein metrics}, 
Invent.\ Math.\ \textbf{203} (2016), no.\ 3, 973--1025.

\bibitem[BFJ09]{BFJ}
S.\ Boucksom, C.\ Favre and M.\ Jonsson, \emph{Differentiability of volumes of 
divisors and a problem of Teissier}, 
J.\ Algebraic Geom.\ \textbf{18} (2009), no.\ 2, 279--308. 

\bibitem[BJ17]{BJ}
H.\ Blum and M.\ Jonsson, \emph{Thresholds, valuations, and K-stability}, 
arXiv:1706.04548v1. 

\bibitem[BX18]{BX}
H.\ Blum and C.\ Xu, \emph{Uniqueness of K-polystable degenerations of Fano varieties}, 
arXiv:1812.03538v1; to appear in Ann.\ of Math.

\bibitem[CDS15a]{CDS1}
X.\ Chen, S.\ Donaldson and S.\ Sun, \emph{K\"ahler-Einstein metrics on 
Fano manifolds, 
I: approximation of metrics with cone singularities}, 
J.\ Amer.\ Math.\ Soc.\ \textbf{28} (2015), no.\ 1, 183--197.

\bibitem[CDS15b]{CDS2}
X.\ Chen, S.\ Donaldson and S.\ Sun, \emph{K\"ahler-Einstein metrics on 
Fano manifolds, 
II: limits with cone angle less than $2\pi$}, 
J.\ Amer.\ Math.\ Soc.\ \textbf{28} (2015), no.\ 1, 199--234.

\bibitem[CDS15c]{CDS3}
X.\ Chen, S.\ Donaldson and S.\ Sun, \emph{K\"ahler-Einstein metrics on 
Fano manifolds, 
III: limits as cone angle approaches $2\pi$ and completion of the main proof}, 
J.\ Amer.\ Math.\ Soc.\ \textbf{28} (2015), no.\ 1, 235--278.

\bibitem[CLS11]{CLS}
D.\ Cox, J.\ Little and H.\ Schenck, \emph{Toric varieties}, Graduate Studies in 
Mathematics, \textbf{124}. American Mathematical Society, Providence, RI, 2011. 

\bibitem[CR15]{CR13}
I.\ A.\ Cheltsov and Y.\ A.\ Rubinstein, 
\emph{Asymptotically log Fano varieties}, Adv.\ Math.\ \textbf{285} (2015), 1241--1300. 

\bibitem[CR18]{CR15}
I.\ A.\ Cheltsov and Y.\ A.\ Rubinstein, 
\emph{On flops and canonical metrics}, Ann.\ Sc.\ Norm.\ Super.\ Pisa Cl.\ Sci.\ 
\textbf{18} (2018), no.\ 1, 283--311. 

\bibitem[CRZ19]{CRZ}
I.\ A.\ Cheltsov, Y.\ A.\ Rubinstein and K.\ Zhang, 
\emph{Basis log canonical thresholds, local intersection estimates, and asymptotically 
log del Pezzo surfaces}, Selecta Math.\ \textbf{2019}, no.\ 2, Art.\ 34, 36 pp. 

\bibitem[Don02]{don}
S.\ Donaldson, \emph{Scalar curvature and stability of toric varieties}, 
J.\ Differential Geom.\ \textbf{62} (2002), no.\ 2, 289--349. 

\bibitem[FLSZZ19]{FLSZZ}
K.\ Fujita, Y.\ Liu, H.\ S\"u\ss, K.\ Zhang and Z.\ Zhuang, 
\emph{On the Cheltsov--Rubinstein conjecture}, arXiv:1907.02727v1. 

\bibitem[FO18]{FO}
K.\ Fujita and Y.\ Odaka, \emph{On the K-stability of Fano varieties and anticanonical 
divisors}, Tohoku Math.\ J.\ \textbf{70} (2018), no.\ 4, 511--521. 

\bibitem[Fuj16]{logKst}
K.\ Fujita, \emph{On log K-stability for asymptotically log Fano varieties}, 
Ann.\ Fac.\ Sci.\ Toulouse Math.\ \textbf{25} (2016), no.\ 5, 1013--1024. 

\bibitem[Fuj17]{hyperplane}
K.\ Fujita, \emph{K-stability of log Fano hyperplane arrangements}, 
arXiv:1709.08213v1.

\bibitem[Fuj19a]{vst}
K.\ Fujita, \emph{A valuative criterion for uniform K-stability of $\mathbb{Q}$-Fano 
varieties}, 
J.\ Reine Angew.\ Math.\ \textbf{751} (2019), 309--338. 

\bibitem[Fuj19b]{pltK}
K.\ Fujita, \emph{Uniform K-stability and plt blowups of log Fano pairs}, 
Kyoto J.\ Math.\ \textbf{59} (2019), no.\ 2, 399--418. 

\bibitem[Fuj19c]{KpsMaeda}
K.\ Fujita, \emph{On K-polystability for log del Pezzo pairs of Maeda type}, preprint. 

\bibitem[HK00]{HK}
Y.\ Hu and S.\ Keel, \emph{Mori dream spaces and GIT}, 
Michigan Math.\ J.\ \textbf{48} (2000), 331--348.

\bibitem[JMR16]{JMR}
T.\ Jeffres, R.\ Mazzeo and Y.\ A.\ Rubinstein, \emph{K\"ahler-Einstein metrics 
with edge singularities} (with an appendix by C.\ Li and Y.\ A.\ Rubinstein), 
Ann.\ of Math.\ \textbf{183} (2016), no.\ 1, 95--176. 

\bibitem[KM98]{KoMo}
J.\ Koll{\'a}r and S.\ Mori, \emph{Birational geometry of algebraic varieties},
With the collaboration of C.\ H.\ Clemens and A.\ Corti. 
Cambridge Tracts in Math., \textbf{134},
Cambridge University Press, Cambridge, 1998.

\bibitem[Laz04a]{L1}
R.\ Lazarsfeld, \emph{Positivity in algebraic geometry, I: Classical setting: line bundles 
and linear series}, Ergebnisse der Mathematik und ihrer Grenzgebiete.\ (3) 
\textbf{48}, Springer, Berlin, 2004.

\bibitem[Laz04b]{L2}
R.\ Lazarsfeld, \emph{Positivity in algebraic geometry, II: Positivity for Vector Bundles, 
and Multiplier Ideals}, Ergebnisse der Mathematik und ihrer Grenzgebiete.\ (3) 
\textbf{49}, Springer, Berlin, 2004.

\bibitem[Li17]{li}
C.\ Li, \emph{K-semistability is equivariant volume minimization}, 
Duke Math.\ J.\ \textbf{166} (2017), no.\ 16, 3147--3218.

\bibitem[LTW19]{LTW}
C.\ Li, G.\ Tian and F.\ Wang, 
\emph{On Yau-Tian-Donaldson conjecture for singular Fano varieties}, 
arXiv:1711.09530v2. 

\bibitem[Sto09]{stoppa}
J.\ Stoppa, \emph{K-stability of constant scalar curvature K\"ahler manifolds}, 
Adv.\ Math.\ \textbf{221} (2009), no.\ 4, 1397--1408. 

\bibitem[Tia97]{tian}
G.\ Tian, \emph{K\"ahler-Einstein metrics with positive scalar curvature}, 
Invent.\ Math.\ \textbf{130} (1997), no.\ 1, 1--37.

\bibitem[Tia15]{tian2}
G.\ Tian, \emph{K-stability and K\"ahler-Einstein metrics}, 
Comm. Pure Appl.\ Math.\ \textbf{68} (2015), no.\ 7, 1085--1156. 

\bibitem[TVAV11]{TVAV}
D.\ Testa, A.\ V\'arilly-Alvarado and M.\ Velasco, \emph{Big rational surfaces}, 
Math.\ Ann.\ \textbf{351} (2011), no.\ 1, 95--107.

\bibitem[TW19]{TW}
G.\ Tian and F.\ Wang, \emph{On the existence of conic K\"ahler-Einstein metrics}, 
arXiv:1903.12547v1. 

\end{thebibliography}
\end{document}